\documentclass[12pt]{amsart}

\textwidth=5.5in \textheight=8.5in
\usepackage{latexsym, amssymb, amsmath,ulem,soul,esint}

\usepackage{amsfonts, graphicx}
\usepackage{graphicx,color}
\usepackage{url}
\newcommand{\be}{\begin{equation}}
\newcommand{\ee}{\end{equation}}
\newcommand{\beq}{\begin{eqnarray}}
\newcommand{\eeq}{\end{eqnarray}}

\newtheorem{thm}{Theorem}[section]

\newtheorem{lma}{Lemma}[section]
\newtheorem{prop}{Proposition}[section]

\newtheorem{claim}{Claim}[section]
\theoremstyle{remark}
\newtheorem{rem}{Remark}[section]
\numberwithin{equation}{section}

\def\be{\begin{equation}}
\def\ee{\end{equation}}
\def\bee{\begin{equation*}}
\def\eee{\end{equation*}}

\def\lf{\left}
\def\ri{\right}

\newcommand{\Ric}{\mathrm{Ric}}

\def\Ric{\text{\rm Ric}}
\def\Rm{\text{\rm Rm}}

\def\na{\nabla}

\def\la{\langle}
\def\ra{\rangle}

\def\e{\varepsilon}
\def\a{{\alpha}}

\def\R{\mathbb{R}}

\begin{document}

\title{manifolds with small curvature concentration}

 \author[P.-Y. Chan]{Pak-Yeung Chan}
\address[Pak-Yeung Chan]{Department of Mathematics, University of California, San Diego, La Jolla, CA 92093}
\email{pachan@ucsd.edu}

 \author[S.-C. Huang]{Shaochuang Huang}
\address[Shaochuang Huang]{Department of Mathematics, Southern University of Science and Technology, Shenzhen, Guangdong, China.}
\email{huangsc@sustech.edu.cn}

\author[M.-C. Lee]{Man-Chun Lee}
\address[Man-Chun Lee]{Department of Mathematics, The Chinese University of Hong Kong, Shatin, N.T., Hong Kong}
\email{mclee@math.cuhk.edu.hk}

\subjclass[2020]{Primary 53E20}

\date{\today}

\begin{abstract} 
In this work, we construct distance like functions with integral hessian bound on manifolds with small curvature concentration and use it to construct Ricci flows on  manifolds with possibly unbounded curvature. As an application, we study the geometric structure of those manifolds without bounded curvature assumption. In particular, we show that manifolds with Ricci lower bound, non-negative scalar curvature, bounded entropy, Ahlfors $n$-regular and small curvature concentration are topologically Euclidean. 
\end{abstract}

\keywords{distance like function, small curvature concentration}

\maketitle

\section{Introduction}

Let $(M,g)$ be a complete manifold. One of the central themes in differential geometry is to determine how far is $M$ from being the standard space-form. The investigation under point-wise curvature control is successful. It is natural and important to consider generic geometric structure when the curvature $||\Rm||_{L^p}$ is only controlled in integral sense. This integral quantity, together with some mild volume non-collapsing conditions often plays a crucial role in the compactness theory and singularity analysis of the limit of sequence of Riemannian manifolds. In \cite{Cheeger2003}, Cheeger studied the problem when the manifold is in addition volume non-collapsed with Ricci lower bound. The technique is elliptic based on his foundational work with Colding \cite{CheegerColding1997}. When $p=n$ concerning the structure of complete  manifolds, it was shown by Cheeger \cite{Cheeger2003} that manifold with $\Ric\geq 0$, Euclidean volume growth and small curvature concentration must be diffeomorphic and pointed Gromov-Hausdorff close to the Euclidean space. The curvature concentration condition now serves as the major almost Euclidean information forcing certain rigidity.  On the other hand in \cite{Wang2020}, Wang considered the Ricci flow on manifolds with almost Euclidean volume condition. It was proved that if $(M,g_0)$ has $\Ric\geq 0$ and almost Euclidean volume growth, then one can construct a canonical diffeomorphism from $M$ to $\mathbb{R}^n$ by establishing long-time existence of Ricci flow. This gives a parabolic proof to the celebrated work of Cheeger-Colding theory. This seems to be natural. Motivated by this, it seems natural and tempting to find a geometric approach to the work of Cheeger \cite{Cheeger2003} using parabolic method which might have further generalization. 

Related regularization result using Ricci flow has been studied extensively \cite{Li-2012, Wang2011,Yang2011,Yang1992a,Yang1992b,Xu2013} where small critical $L^{n/2}$ norms of curvature is assumed while non-scaling invariant norm on Ricci ($||\Ric||_{p}$ where $p>\frac{n}{2}$) is required. Particularly, the boundedness of initial $||\Ric||_{L^p}$ enables one to control the point-wise $\Ric(g(t))$ for $t>0$ resulting a $C^0$ comparison between the resulting flow $g(t)$ and initial data $g_0$. This is crucial in the above mentioned work, it is however not likely the case in general. In \cite{ChanChenLee2021}, the first, third author and Chen took up the problem of generalizing the regularization of Ricci flow in the scaling invariant case. It was proved that when the initial metric has Ricci curvature and entropy bounded from below, then the bounded curvature Ricci flow will be instantaneously smoothed out if in addition the curvature concentration is small enough.  The curvature boundedness of the flow is important so that we can get the flow started using the by-now classical existence result of Shi \cite{Shi1989}. Various applications had been discussed under this technical assumption. The result is however not satisfactory in contrast with the elliptic method of Cheeger \cite{Cheeger2003} due to the need of bounded curvature. The ineffective curvature bounded is apparently only serving as a stepping stone, it is reasonable to expect that one can always start a Ricci flow without this technical assumption. The general existence theory of Ricci flow is however unclear so far except for some special cases.  

In this work, we continue the study of complete non-compact manifolds with small curvature concentration. Given its parabolic nature, the Ricci flow will in general turn almost Euclidean structures into useful estimates. This is usually called the pseudolocality of Ricci flows. The major challenge is to get the flow started with the parabolic epsilon regularity estimates from the almost Euclidean initial data. To do this, we employ the method of Topping \cite{Topping2010} to construct global solution without full curvature bound. The novel idea is to approximate the initial metric by bounded curvature complete manifolds in the local smooth sense. This method is of great success but usually requires the manifold to support a smooth distance like function with suitable control on hessian. This is usually related to the curvature control thanks to the Hessian comparison.

Despite the technical issue on the cut locus of the usual distance function, the missing of hessian control of any kind remains the essential difficulty. When the curvature is fully bounded, existence of distance like function with hessian bound was considered by Shi \cite{ShiPhd} for the purpose of developing Ricci flow maximum principle theory, see also \cite{CheegerGromov1985}. In this work, in order to adopt the setting in the content of $L^{n/2}$ bound of curvature, we develop an integral version of Shi's construction, see Theorem~\ref{Thm:exhaustion-C2}.  Precisely, we show that under small local curvature concentration relative to local Sobolev constant, manifolds with Ricci lower bound will support a distance like function such that its hessian is locally bounded uniformly on any unit ball in $L^n_{loc}$. Other than the importance on constructing Ricci flow, we use it to study a density problem of Sobolev space, see Theorem~\ref{densityThm}. 

The purpose of this work is to extend the applicability of earlier work \cite{ChanChenLee2021} where bounded curvature assumption is imposed to start the flow. This confirms the expectation on the Ricci flow existence. Our main result is as follows:

\begin{thm}\label{main-Existence}
For all $n\geq 4$, $A,L,\tau,\lambda,v_0>0$, there are $C_0,\sigma,T>0$ depending only on $n,A,L,\tau,\lambda,v_0$ such that the following holds. Suppose $(M^n,g_0)$ is a complete non-compact manifold such that for all $x\in M$, 
\begin{enumerate}
\item[(i)] $\mathcal{R}(g_0)\geq -\lambda$;
\item[(ii)] $\mathrm{Vol}_{g_0}\left( B_{g_0}(x,2r)\right)\leq L \cdot \mathrm{Vol}_{g_0}\left( B_{g_0}(x,r)\right)\leq Lv_0 r^n$ for all $r\leq 1$;
\item[(iii)] $\bar\nu(B_{g_0}(x,1),g_0,\tau)\geq -A$;
\item[(iv)] $\displaystyle\left(\int_{B_{g_0}(x, 1)}|\Rm(g_0)|^{n/2}d\mathrm{vol}_{g_0}\right)^{2/n}\leq \e $ for some $\e<\sigma $;
\item[(v)] $\liminf_{z\to +\infty}\Ric(z)>-\infty$
\end{enumerate}
Then there exists a complete Ricci flow $g(t)$ on $M\times [0,T]$ with $g(0)=g_0$ such that for any $x\in M$ and  $t\in (0,  T]$, we have 
\be 
\left\{
\begin{array}{ll}
\mathcal{R}(g(t))\geq -\lambda;\\
    |\Rm|(x,t)\leq{C_0\e}{t}^{-1};   \\
     \mathrm{inj}_{g(t)}(x) \geq C_0^{-1}\sqrt{t};\\
     \displaystyle\left(\int_{B_t(x,1)}|\Rm(g(t))|^{n /2}d\mathrm{vol}_{g(t)}\right)^{2/n}\leq C_0 \e. 
\end{array}
\right.
\ee
\end{thm}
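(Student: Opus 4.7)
The plan is to follow the approximation philosophy of Topping~\cite{Topping2010}: construct a sequence of complete bounded-curvature metrics $\tilde g_0^{(k)}$ converging locally smoothly to $g_0$, start a Ricci flow from each using the bounded-curvature existence result of Chan--Chen--Lee~\cite{ChanChenLee2021}, and pass to a subsequential limit using uniform pseudolocality estimates that depend only on the data in (i)--(iv).

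The first step is to invoke Theorem~\ref{Thm:exhaustion-C2} to produce a proper smooth exhaustion function $\rho : M \to [0,\infty)$ with $|\nabla \rho|_{g_0} \le C$ and $\|\nabla^2\rho\|_{L^n(B_{g_0}(x,1))} \le C$ uniformly in $x$. Using $\rho$, I would cut off $g_0$ outside the sublevel set $\{\rho < k\}$ and glue in, on the annulus $\{k < \rho < 2k\}$, a standard completion of bounded curvature. The goal of the construction is that $\tilde g_0^{(k)} \equiv g_0$ on $\{\rho < k\}$, each $\tilde g_0^{(k)}$ has bounded curvature, and hypotheses (i)--(iv) hold with $k$-uniform constants (possibly slightly degraded by a fixed factor independent of $k$). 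The integral Hessian bound on $\rho$ is crucial: it ensures that the $L^{n/2}$ curvature mass introduced in the transition annulus tends to zero as $k \to \infty$, preserving the smallness in (iv); analogous arguments via standard conformal-change estimates in the transition layer preserve the scalar and Ricci lower bounds, Ahlfors regularity, and entropy bound. Condition~(v) guarantees that $\rho$ and its level sets interact well with the Ricci curvature near infinity, which is needed to carry out this gluing uniformly.

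Once the approximations are in place, applying~\cite{ChanChenLee2021} to each $\tilde g_0^{(k)}$ yields a complete bounded-curvature Ricci flow $g_k(t)$ on some time interval $[0,T]$, with constants $T, C_0$ depending only on $n, A, L, \tau, \lambda, v_0$ and independent of $k$. The $k$-uniform bounds $|\Rm|_{g_k(t)} \le C_0\e / t$ and $\mathrm{inj}_{g_k(t)}(x) \ge C_0^{-1}\sqrt{t}$ then enable Hamilton's compactness theorem to extract a diagonal subsequence converging smoothly on compact subsets of $M \times (0,T]$ to a Ricci flow $g(t)$ inheriting all the claimed pointwise and integral estimates. Continuity at $t=0$ follows because the uniform pseudolocality estimates give $g_k(t) \to \tilde g_0^{(k)}$ locally smoothly as $t \to 0$ uniformly in $k$, combined with the local smooth convergence $\tilde g_0^{(k)} \to g_0$. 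The scalar curvature lower bound $\mathcal{R}(g(t)) \ge -\lambda$ passes to the limit from each approximating flow, where it holds by the bounded-curvature maximum principle.

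The main obstacle is the approximation step itself: all five hypotheses (i)--(iv) must be preserved with $k$-uniform constants from only an integral Hessian bound on $\rho$. The smallness threshold $\sigma$ in (iv) leaves essentially no room for error, so the transition annulus $\{k < \rho < 2k\}$ must be chosen thin enough that its $L^{n/2}$ curvature contribution tends to zero, yet regular enough that the glued metric is smooth with bounded (if $k$-dependent) curvature. Balancing these requirements, together with simultaneously controlling volume doubling, entropy, and the scalar lower bound across the gluing, is the technical heart of the argument, and is precisely where the integral estimates of Theorem~\ref{Thm:exhaustion-C2} are indispensable.
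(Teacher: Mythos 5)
Your overall strategy is the same as the paper's: use the exhaustion function $\rho$ from Theorem~\ref{Thm:exhaustion-C2} to produce complete bounded-curvature approximants that agree with $g_0$ on larger and larger sublevel sets, verify the pseudolocality hypotheses uniformly, start Shi's flow on each, and take a Hamilton--Cheeger--Gromov limit. The description of the approximants as ``cut off and glue in a standard completion'' is loose; what actually works (and what the paper does) is Topping's conformal blow-up $g_{R,0}=e^{2\mathfrak{F}(\rho/R)}g_0$ on the sublevel set $U_R$, which makes $U_R$ complete with bounded curvature without any gluing.

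There is, however, a concrete gap in the plan. You propose to apply \cite{ChanChenLee2021} directly to the approximants and, in the verification step, to ``preserve the scalar and Ricci lower bounds'' via standard conformal-change estimates. But the Ricci curvature of $g_{R,0}$ involves $\nabla^2 F_R$, hence $\frac{\mathfrak{F}'}{R}\nabla^2\rho$, and Theorem~\ref{Thm:exhaustion-C2} controls $\nabla^2\rho$ only in $L^n_{\mathrm{loc}}$, not pointwise. No pointwise Ricci lower bound for $g_{R,0}$ follows from this. In contrast, the scalar curvature of $g_{R,0}$ involves only $|\nabla\rho|$ and $\Delta\rho$, which are pointwise bounded, so the scalar lower bound does survive. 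Since the pseudolocality in \cite{ChanChenLee2021} is stated with a Ricci lower bound assumption, your plan as written cannot be closed: you cannot get that bound for $g_{R,0}$ uniformly. This is exactly why the paper first proves the restated pseudolocality Theorem~\ref{pseudo}, in which the Ricci lower bound is replaced by the weaker package that \emph{is} conformally stable: scalar lower bound, local volume doubling, local volume ratio upper bound, entropy lower bound, and small curvature concentration (Claims~\ref{claim1}--\ref{claim3} and the curvature-concentration claim). Without that reformulation, the key step of your proof does not go through. A secondary point you gloss over: condition (v) gives only an ineffective Ricci lower bound, so the conformal error terms carry $R^{-1}$ factors times ineffective constants; the argument needs these errors to vanish as $R\to\infty$, which is what makes the verification uniform, and should be addressed explicitly.
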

The $\bar \nu$ entropy is used to control the log Sobolev inequality. The precise definition of $\bar \nu$ will be given in Section $2$. This is a quantitative version of short-time existence. The entropy here play the role to ensure the non-collapsing. In fact, with a Ricci lower bound, it will be equivalent to say that the volume ratio is bounded from below up to a scale, see \cite[Section 3]{Wang2018}.
Combining with the scalar curvature lower bound gives a lower bound of the Perelman's entropy which is commonly considered in Ricci flow theory. Together with a simple scaling argument, this implies a short-time existence if we only have $|\Rm(g_0)|\in L^{n/2}$, see Theorem~\ref{Ln/2-existence}. 

There is a large body of work on studying the topological rigidity of complete non-compact manifolds with $\Ric\geq 0$ and almost Euclidean structure such as almost Euclidean volume growth or Sobolev constant, for instances see \cite{CheegerColding1997, Chen2019,Ledoux1999,Xia1,Xia2} and the reference therein. With the newly established short-time existence, we may consequently generalize the applications in \cite{ChanChenLee2021}. In this regard, we prove the following. The relaxation from $\Ric\geq 0$ to $\mathcal{R}\geq 0$ might be of independent interest. 
\begin{thm}\label{Euc-gap}
For all $n\geq 4$ and $\e,A,L,v_0>0$, there is $\sigma>0$ depending only on $n,A,L,\e,v_0$ such that if $(M^n,g_0)$ is a complete non-compact manifold satisfying
\begin{enumerate}
\item[(i)] $\mathcal{R}(g_0)\geq 0$;
\item[(ii)] $\mathrm{Vol}_{g_0}\left( B_{g_0}(x,2r)\right)\leq L \cdot \mathrm{Vol}_{g_0}\left( B_{g_0}(x,r)\right)\leq Lv_0r^n$ for all $r>0$;
\item[(iii)] $\bar\nu(M,g_0)\geq -A$;
\item[(iv)] $\displaystyle\left(\int_{M}|\Rm(g_0)|^{n/2}d\mathrm{vol}_{g_0}\right)^{2/n}\leq \sigma $;
\item[(v)] $\Ric\geq -k$ for some $k\in \mathbb{R}$;
\end{enumerate}
for all $x\in M$. Then for all $x\in M$ and $r>0$, 
$$\mathrm{Vol}_{g_0}\left( B_{g_0}(x,r)\right)\geq (1-\e)\omega_n r^{n}.$$
Moreover, $M^n$ is homeomorphic to $\mathbb{R}^n$. If $n>4$, the homeomorphic can be chosen to be diffeomorphism.
\end{thm}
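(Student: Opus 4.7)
The plan is to use Theorem~\ref{main-Existence} as a smoothing device that converts $g_0$ into a Ricci flow with bounded curvature at positive time, and then to exploit this smoothed geometry to deduce the Euclidean volume growth and the topological rigidity of $g_0$. The hypotheses of Theorem~\ref{main-Existence} hold here with $\lambda=0$: the global bounds in (ii)--(iv) restrict trivially to the required unit-ball bounds, and (i), (v) are identical. So the theorem produces a complete Ricci flow $g(t)$ on $M\times [0,T]$ with $g(0)=g_0$, $\mathcal{R}(g(t))\geq 0$, $|\Rm(g(t))|\leq C_0\sigma/t$, $\mathrm{inj}_{g(t)}\geq C_0^{-1}\sqrt{t}$, and a local $L^{n/2}$ curvature bound propagated to every $t\in (0,T]$.

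I would then establish the almost Euclidean volume growth by a contradiction/compactness argument. If the conclusion fails, there exist a fixed $\e>0$ and a sequence $\sigma_j\to 0$ of pointed manifolds $(M_j, g_{0,j}, x_j)$ satisfying (i)--(v) with the same $A,L,v_0,k$ but with $\mathrm{Vol}_{g_{0,j}}(B_{g_{0,j}}(x_j,r_j)) < (1-\e)\omega_n r_j^n$. After rescaling so that $r_j=1$, I would run Theorem~\ref{main-Existence} on each $M_j$ to obtain Ricci flows $g_j(t)$ with uniform curvature bound $|\Rm(g_j(t))|\leq C_0\sigma_j/t\to 0$ (pointwise in $t$), uniform non-collapsing, and uniform scalar curvature non-negativity. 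Cheeger--Gromov compactness then extracts a subsequential pointed smooth limit flow $(M_\infty, g_\infty(t), x_\infty)$ on $(0,T]$. The fact that $\sigma_j\to 0$ forces $|\Rm(g_\infty(t))|\equiv 0$, and combined with the Euclidean volume upper bound inherited from (ii), the non-collapsing inherited from (iii), and $\mathcal{R}\equiv 0$, the limit must be the flat $\mathbb{R}^n$ for every $t>0$. Passing $t\to 0$ and using the weak continuity of the flow at the initial time contradicts the persistent volume deficit along the sequence.

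For the topological conclusion I would fix a small $t>0$ and work with $(M,g(t))$: this is complete with bounded curvature $C_0\sigma/t$, uniformly positive injectivity radius, and almost Euclidean volume growth at every scale --- the last point following from the preceding step together with the volume-form monotonicity $\partial_t d\mathrm{vol}_{g(t)} = -\mathcal{R}(g(t))\, d\mathrm{vol}_{g(t)}\leq 0$ and the weak metric continuity of the flow at $t=0$. After rescaling, the geometry is arbitrarily close to flat Euclidean space on every compact region, and the bounded-curvature diffeomorphism conclusion of \cite{ChanChenLee2021} identifies $M$ with $\mathbb{R}^n$ smoothly for $n>4$; for $n=4$ Freedman's theorem upgrades this to a homeomorphism. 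Since the smooth structure is independent of $t$, the same identification holds for $g_0$.

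The main obstacle is the initial-time passage in the compactness step: the limit flow $g_\infty(t)$ is identified with $\mathbb{R}^n$ for every $t>0$, but the failed volume inequality is an assertion about the initial slice $t=0$, whose behaviour along the limit is only controlled in rather weak senses (Gromov--Hausdorff, for instance, since the bound $|\Ric(g_j(t))|\leq C_0\sigma_j/t$ is not $L^\infty(dt)$-integrable and does not give classical distance distortion). Handling this transition --- and doing so in a way that channels the small $L^{n/2}$ curvature concentration through the limit --- is the crux. The distance-like function with integral Hessian bound constructed in Theorem~\ref{Thm:exhaustion-C2} is the natural tool to anchor the geometry at $t=0$ uniformly in $j$.
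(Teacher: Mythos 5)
Your overall strategy --- smooth $g_0$ via Theorem~\ref{main-Existence}, exploit the smoothed geometry, and transfer information back to $t=0$ --- matches the paper's approach, and you correctly flag the crux: how to pass the positive-time conclusions back to the initial slice. However, you leave that crux as an open problem, whereas the paper actually closes it, so as written your argument has a genuine gap.

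The paper resolves the initial-time passage with two concrete ingredients that you gesture at but do not deploy. First, since $\mathcal{R}(g(t))\geq 0$ (preserved by the maximum principle), the volume form is pointwise non-increasing, so $\mathrm{Vol}_{g_0}(A)\geq \mathrm{Vol}_{g(t)}(A)$ for any fixed set $A$. Second, Lemma~\ref{DistanceDistortion-RF}(i) gives $d_{g_0}\leq d_{g(t)}+C_n\sqrt{t}$, hence $B_{g(t)}(x,r)\subset B_{g_0}(x,r+C_n\sqrt{t})$. Combining, an almost-Euclidean volume ratio for $g(t)$ at scale $r\sim\sqrt{t}$ transfers directly to an almost-Euclidean lower bound for $g_0$ at that scale. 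There is no need for any weak convergence at $t=0$ or any compactness argument at the initial time; the transfer is by elementary metric comparison of the same flow at two times. Your compactness extraction of a flat limit and subsequent ``pass $t\to 0$'' cannot be made precise with only Gromov--Hausdorff control, which is exactly why you identified it as the obstacle.

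Two further points. (1) Theorem~\ref{main-Existence} only gives a flow on a finite interval $[0,T]$, which would only control scales up to $\sqrt{T}$; the claimed almost-Euclidean volume ratio is for \emph{all} $r>0$. The paper handles this by observing that hypotheses (i)--(v) are scaling invariant, applies Theorem~\ref{main-Existence} to $R^{-2}g_0$ for $R\to\infty$, and passes to a limit to get a long-time flow, which is then fed into the above comparison at arbitrarily large $\sqrt{t}$. Your single application of the theorem does not achieve this. (2) For the topology, the paper does not invoke Freedman at $n=4$: the injectivity radius estimate plus distance distortion exhibit $M$ as a monotone union of open $n$-cells, and then Brown's theorem gives the homeomorphism to $\mathbb{R}^n$ for \emph{all} $n$; Stallings is cited only to upgrade to a diffeomorphism when $n>4$. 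Your proposal instead appeals to ``bounded-curvature diffeomorphism'' results and Freedman, which is a different (and for $n=4$ unnecessary) route.
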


\begin{rem}
The assumption on the Ricci lower bound is only made to construct distance like function. It will be clear from the proof that it can be relaxed further, for instance see the treatment in \cite{He2016}. We leave it to the interested readers.
\end{rem}

Indeed, a lower bound on global entropy implies a lower bound on volume ratio, see \cite[Section 3]{Wang2018} for example. Hence, the volume growth assumption can be rephrased as uniformly Ahlfors $n$-regular for all scale, i.e. there exists $C(n,L,A,v_0)>1$ such that for all $x\in M$ and $r>0$, $$C^{-1}r^n\leq \mathrm{Vol}_{g_0}\left( B_{g_0}(x,r)\right)\leq Cr^n.$$

The result is largely motivated by a result of Cheeger \cite[Theorem 4.32]{Cheeger2003}. When applying to the metric cone at infinity, the result of Cheeger says that when $k=0$, then $(M,g,x_0)$ must be pointed Gromov-Hausdorff close to $\mathbb{R}^n$ and is diffeomorphic to $\mathbb{R}^n$. In Theorem~\ref{Euc-gap}, if we assume $\Ric\geq 0$, then $(M,g_0,p)$ is pointed Gromov-Hausdorff close to $(\mathbb{R}^n,g_{euc},0)$ by \cite[Theorem 0.8]{Colding1997}, see also \cite[Theorem A.1.11]{CheegerColding1997}. In fact, if $v_0$ is the almost Euclidean constant, then the argument in \cite[Section 5]{Wang2020} will imply that $d_{g_0}$ is pointed Gromov-Hausdorff close to the Euclidean space. This will give an alternative proof to Cheeger's result in this content.

There is also application on the regularity of the Gromov-Hausdorff limit. We refer readers to section~\ref{application-from-RF}.

The paper is organized as follows. In Section $2$, we shall show that the classical exhaustion function constructed by Schoen-Yau satisfies an integral Hessian bound using integration by part and Moser iteration method as in \cite{ShiPhd} by Shi. Some application of this integral estimate will also be given. In Section $3$, we recall (and refine) a Pseudolocality result proven by Chen, the first and third named authors in \cite{ChanChenLee2021}. The proof of Theorem \ref{main-Existence} will then be given in Section $4$. The short time existence of the Ricci flow is established by doing conformal change of the metric using the exhaustion function constructed (See also \cite{Topping2010}). Finally, we will cover some geometric applications of this flow existence result in Section $5$.  

{\it Acknowledgement}: S. Huang is partially supported by NSFC \#1200011128. M.-C. Lee is partially supported by RGC grant (Early Career Scheme) of Hong Kong No. 24304222 and a direct grant of CUHK.

\section{Exhaustion function with $L^n_{loc}$ control}\label{Sec:exhaustion}

In this section, we show that $M$ supports an exhaustion function $\rho\in C^\infty$ with bounded $L^n$ norm of $|\na^2\rho|$ over any unit balls under small local curvature concentration condition. When the curvature is globally bounded, the existence of such function was first constructed by Shi \cite{ShiPhd} where the hessian is uniformly bounded, see also \cite{Tam2010,ImperaRimoldiVeronelli2022}. 

 To formulate the result, let us recall the local entropy introduced by B. Wang \cite{Wang2018}: Let $\Omega$ be a connected domain with possibly empty boundary in $M$, 
\begin{align*}
&D_g(\Omega):=\left\{u: u\in W^{1,2}_0(\Omega), u\geq 0 \text{  and  } \|u\|_{L^2(\Omega)}=1 \right\},
\\
&W(\Omega, g, u, \tau):=\int_{\Omega}\tau(\mathcal{R}u^2+4|\nabla u|^2)-2u^2\log u \;   d\mathrm{vol}_g -\frac{n}{2}\log(4\pi\tau)-n,
\\
&\bar W(\Omega, g, u, \tau):=\int_{\Omega}4\tau |\nabla u|^2-2u^2\log u \;   d\mathrm{vol}_g -\frac{n}{2}\log(4\pi\tau)-n,
\\
&\nu(\Omega, g, \tau):= \inf_{u\in D_g(\Omega), s \in (0, \tau]}W(\Omega, g, u, s),
\\
&\bar\nu(\Omega, g, \tau):= \inf_{u\in D_g(\Omega), s \in (0, \tau]}\bar W(\Omega, g, u, s)
\end{align*}
where $\mathcal{R}$ denotes the scalar curvature of $(M,g)$.
In particular, the local entropy measures the optimal log-Sobolev constant over compact set and is closely related to the volume ratio, see \cite[Section 3]{Wang2018} for the detailed discussion. We also refer interested readers to \cite{GrigorSurvey,ZhangBook} for a comprehensive exposition on its relations to various functional constants.

In this section, we will consider manifolds those Ricci curvature is bounded from below and are non-collapsed in term of $\bar \nu$ entropy. This is equivalent to volume non-collapsing. We stick with the entropy assumption due to its significant in Ricci flow theory.

\begin{thm}\label{Thm:exhaustion-C2}
For any $A,\tau>0$ and $n\geq 5$, there exists $\delta(n,A,\tau)>0$ such that the following holds: Suppose $(M,g_0,x_0)$ is a complete non-compact manifold so that 
\begin{enumerate}
\item[(i)] $\bar \nu(B_{g_0}(x,1),g_0,\tau)\geq -A$;
\item[(ii)] $\Ric\geq -k$ for some $k\in \mathbb{R}$;
\item[(iii)] $\displaystyle \left(\int_{B_{g_0}(x,1)}|\Rm|^{n/2} \; d\mathrm{vol}_{g_0}\right)^{2/n}\leq \delta$
\end{enumerate}
for all $x\in M$. Then there is a smooth proper function $\rho$ on $M$ and a constant $C(n,k,A,\tau)>0$ such that for  $x\in M\setminus B_{g_0}(x_0,4)$, 
\begin{equation}\label{rho prop}
\left\{
\begin{array}{ll}
C^{-1}d_{g_0}(x,x_0){-C_1}\leq \rho(x) \leq d_{g_0}(x,x_0);\\
|\nabla\rho|^2+|\Delta \rho|\leq C;\\
 \displaystyle \int_{B_{g_0}(x,1)}|\nabla^2 \rho|^{n}\leq C,
\end{array}
\right.
\end{equation}
{where $C_1$ is a positive constant.}
\end{thm}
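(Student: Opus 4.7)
The plan is to split the proof into two parts: first produce a smooth exhaustion $\rho$ satisfying the distance comparison and the pointwise bounds on $|\nabla \rho|$ and $|\Delta \rho|$ by a classical smoothing of the distance function, and then separately establish the $L^n_{loc}$ Hessian bound by a Bochner-based energy argument amplified by Moser iteration. For the first two conclusions in \eqref{rho prop}, I would apply the Schoen--Yau type smoothing of $d_{g_0}(\cdot,x_0)$: under the Ricci lower bound in hypothesis (ii), the Laplacian comparison theorem yields a proper smooth function with $C^{-1}d_{g_0}(\cdot,x_0) - C_1 \leq \rho \leq d_{g_0}(\cdot,x_0)$ and $|\nabla \rho|^2 + |\Delta \rho| \leq C$ pointwise. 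This construction is standard (see \cite{ShiPhd,Tam2010,CheegerGromov1985}) and does not use the curvature concentration hypothesis.

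For the base $L^2$ estimate on $|\nabla^2 \rho|$, the starting point is Bochner's identity
\begin{equation*}
\tfrac{1}{2}\Delta |\nabla \rho|^2 = |\nabla^2 \rho|^2 + \langle \nabla \rho, \nabla \Delta \rho\rangle + \Ric(\nabla \rho, \nabla \rho).
\end{equation*}
I would test this against a cutoff $\phi^2$ supported in $B_{g_0}(x,2)$ and integrate by parts once to trade the uncontrolled term $\nabla \Delta \rho$ for $(\Delta \rho)^2$-type quantities, which are bounded by the first step. Combined with the pointwise bounds on $|\nabla \rho|$ and $|\Delta \rho|$, the Ricci lower bound, and the uniform upper volume bound on unit balls (a consequence of Bishop--Gromov plus the entropy hypothesis; see \cite{Wang2018}), this yields the base estimate
$$\int_{B_{g_0}(x,1)} |\nabla^2 \rho|^2 \, d\vol_{g_0} \leq C(n,k,A,\tau),$$
which serves as the starting point for the iteration.

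To bootstrap from $L^2$ to $L^n$, I would run Moser iteration on $w := |\nabla^2 \rho|^2 + 1$. Computing $\Delta w$ via the Ricci commutation identities and then dispatching the ensuing $\nabla \Rm$ and $\nabla^3 \rho$ terms by further integration by parts (the former via shifting derivatives onto $\rho$-factors, the latter via Cauchy--Schwarz absorption), one expects an energy inequality of the shape
\begin{equation*}
\int |\nabla(w^{q/2}\phi)|^2 \leq C_q\int_{\supp \phi}\bigl(|\nabla \phi|^2 + \phi^2\bigr) w^q + C_q\int |\Rm|\, w^q \phi^2,\qquad q\geq 1.
\end{equation*}
The local Sobolev inequality, valid from $\bar\nu \geq -A$ by \cite{Wang2018}, upgrades the left hand side to an $L^{2n/(n-2)}$ norm of $w^{q/2}\phi$, and Hölder gives
$$\int |\Rm|\, w^q\phi^2 \leq \Bigl(\int_{\supp \phi}|\Rm|^{n/2}\Bigr)^{2/n}\|w^{q/2}\phi\|_{L^{2n/(n-2)}}^2.$$
The smallness of $\delta$ in hypothesis (iii), chosen quantitatively small against the local Sobolev constant and against the (finitely many) iteration constants $C_q$, permits absorbing the curvature term into the left hand side at every stage. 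Finitely many iterations starting from $q=1$ and terminating at $q = n/2$ then give the desired $L^n$ bound on $|\nabla^2 \rho|$.

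The principal technical obstacle is the rigorous derivation of the energy inequality above: the raw computation of $\Delta w$ naturally involves $\nabla^3 \rho$ and $\nabla \Rm$, neither of which is available pointwise under our hypotheses. Their removal requires care—additional integration by parts, and possibly working with a slightly augmented auxiliary quantity combining $|\nabla^2\rho|^2$ with lower order terms like $|\nabla \rho|^2$ carrying well-tuned coefficients so that the damaging cross-terms cancel or are absorbed by $|\nabla^3 \rho|^2$ coming from $\tfrac{1}{2}\Delta w$. Controlling how the constants $C_q$ depend on $q$ throughout the finite iteration is what finally pins down the quantitative smallness threshold $\delta(n,A,\tau)$.
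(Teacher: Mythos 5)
Your proposal follows the same broad architecture as the paper — Schoen--Yau exhaustion, $L^2$ base estimate by integration by parts, Moser iteration closed by the smallness of curvature concentration against the local Sobolev constant — but there are two substantive gaps, the first of which is the load-bearing one.

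First, the key structural input you are missing is the eigenfunction identity. The paper's $\rho$ is $-\log h$ where $h>0$ solves $\Delta h = \lambda h$ (Schoen--Yau's construction), so that $\Delta\rho = -\lambda + |\nabla\rho|^2$ holds \emph{pointwise}. This is not a ``smoothing of the distance function via Laplacian comparison'': that would only give $C^0$ and first-order control. The identity $\Delta\rho = -\lambda+|\nabla\rho|^2$ is precisely what makes the Moser iteration closable, because the Ricci commutation identity produces the term $\nabla^2(\Delta\rho)_{ij}$, which \emph{a priori} contains fourth derivatives of $\rho$ and is not controlled. Substituting the eigenfunction identity converts it into $2\rho_{ki}\rho_{kj} + 2\rho_{kij}\rho_k$ --- a quadratic in $\nabla^2\rho$ (the quantity being iterated on) plus a $\nabla^3\rho$ term that is absorbed by the good $|\nabla^3\rho|^2$ from $\Delta|\nabla^2\rho|^2$. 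Your hedge about ``an augmented auxiliary quantity with well-tuned coefficients'' does not substitute for this; with a generic smoothing of $d(\cdot,x_0)$ the term $\nabla^2(\Delta\rho)$ simply has no structure, and the iteration does not close.

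Second, the iteration bookkeeping you describe (``from $q=1$ to $q=n/2$'') glosses over the mismatch between the Sobolev step and the base exponent. The Sobolev embedding maps $L^p \to L^{np/(n-2)}$, so the chain reaching $L^n$ must start at $p_0 = n\cdot((n-2)/n)^m$ for some integer $m$, and $p_0 \neq 2$ in general. The paper bridges from the $L^2$ base estimate to $L^{p_0}$ via the interpolation-plus-iteration device of \cite[Chapter 19]{Li2012}, and also needs the constraint $p \leq n-2$ in the intermediate steps to make the Young's-inequality splitting of the $|\Rm|^2 w^{p-2}$ term work — a constraint which, conveniently, is exactly saturated at the final step since $p_0\mu^{m-1}=n-2$. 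These are not decorative details; without them the claimed constant $C(n,k,A,\tau)$ does not emerge. You should state both the eigenfunction structure and the $p_0 \to 2$ interpolation explicitly.

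Minor remarks: the paper iterates on $w = \sqrt{|\nabla^2\rho|^2+1}$ rather than $|\nabla^2\rho|^2+1$, which affects which exponent corresponds to $L^n$ but not the substance; your base $L^2$ estimate is correctly flagged as depending only on $n$ and the Ricci lower bound; and the integration-by-parts strategy for the $\nabla\Rm$ term (shifting the derivative onto the cutoff and the $\rho$-factors) matches the paper's handling of its term $\mathbf{III}$.
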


\begin{rem}
Since the function $\rho$ is smooth, the estimates will remain true on $M$ but the estimates around $x_0$ will largely depend on the geometry there.  The condition (i) together with lower bound on $\mathcal{R}$ will imply an explicit lower bound of $\nu(B_{g_0}(x,1),g_0,\tau)$, see \cite[Lemma 3.1]{Wang2018}. 
\end{rem}

 We start by recalling a construction by Schoen-Yau \cite{SchoenYau1994}.
\begin{lma}\label{SY-exhaustion}
Suppose $(M^n,g_0)$ is a complete non-compact manifold such that $x_0\in M$ and $\Ric(g_0)\geq -k$ for some $k\in \mathbb{R}$. Then there exist $C_0(n,k),\lambda(n,k)>0$ and a positive smooth function $h$ on $M$ such that on $M\setminus B_{g_0}(x_0,2)$,
\begin{equation}\label{proph}
\left\{
\begin{array}{ll}
\Delta h=\lambda h;\\
0<h(x)\leq C_1e^{-C_0d_{g_0}(x,x_0)};\\
|\nabla \log h|\leq C_0,
\end{array}
\right.
\end{equation}
where $C_1$ is a positive constant. In particular, $\rho=-\log h$ is a proper function on $M$ such that 
$$\sup_{M\setminus B_{g_0}(x_0,2)} |\nabla\rho|^2+|\Delta\rho|\leq C_0.$$
\end{lma}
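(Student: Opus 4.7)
The plan is to produce $h$ as a Schoen--Yau type positive solution of $\Delta h = \lambda h$ on an end of $M$ with exponential decay, then set $\rho := -\log h$. I would fix $\lambda = \lambda(n,k) > 0$ (for definiteness $\lambda = 1$) and build a global exponential barrier. Since $\Ric(g_0) \geq -k$ (we may assume $k \geq 0$), the Laplacian comparison theorem yields $\Delta_{g_0} r \leq C(n,k)$ in the barrier sense for $r := d_{g_0}(\cdot, x_0) \geq 1$. Hence, for $\alpha = \alpha(n,k,\lambda)$ chosen large enough, the function $\bar h(x) := e^{-\alpha (r(x) - 2)}$ is a (barrier) supersolution of $L := \Delta - \lambda$ on $M \setminus B_{g_0}(x_0, 2)$, since formally
\[
\Delta \bar h - \lambda \bar h = \bigl(\alpha^2 - \alpha \Delta r - \lambda\bigr) \bar h \geq 0,
\]
with $\bar h \equiv 1$ on $\partial B_{g_0}(x_0, 2)$.

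Next I would carry out an exhaustion argument. For each $R \geq 3$ solve the Dirichlet problem
\[
\Delta h_R = \lambda h_R \ \text{on}\ B_{g_0}(x_0, R) \setminus \overline{B_{g_0}(x_0, 2)}, \qquad h_R|_{\partial B_{g_0}(x_0, 2)} = 1, \qquad h_R|_{\partial B_{g_0}(x_0, R)} = 0,
\]
which is uniquely solvable because the zeroth-order coefficient of $L$ has the favourable sign. The maximum principle (comparison with $\bar h$) yields $0 < h_R \leq \bar h$, while the Harnack inequality together with the inner-boundary normalization $h_R \equiv 1$ bounds $h_R$ uniformly away from zero on each compact annulus. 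Interior elliptic $C^{2,\alpha}$ estimates then extract a subsequential limit $h_\infty > 0$ satisfying $\Delta h_\infty = \lambda h_\infty$ on $M \setminus \overline{B_{g_0}(x_0, 2)}$ with $h_\infty \leq \bar h$, and any smooth positive extension of $h_\infty$ into $B_{g_0}(x_0, 2)$ provides the required $h$ on all of $M$.

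For the gradient bound I would invoke Yau's (Cheng--Yau) gradient estimate for positive solutions of $\Delta u = \lambda u$ on manifolds with $\Ric \geq -k$, which gives $|\nabla \log h| \leq C_0(n, k, \lambda)$ on $M \setminus B_{g_0}(x_0, 2)$. A direct computation then produces $|\nabla \rho|^2 = |\nabla \log h|^2 \leq C_0^2$ and
\[
\Delta \rho = -\frac{\Delta h}{h} + \frac{|\nabla h|^2}{h^2} = -\lambda + |\nabla \log h|^2,
\]
so that $|\Delta \rho| \leq \lambda + C_0^2$; properness of $\rho$ follows from $\rho(x) \geq \alpha (r(x) - 2)$ outside $B_{g_0}(x_0, 2)$, and the constants can be absorbed into a single $C_0$. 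The main obstacle I anticipate is the non-smoothness of $r$: the comparison with $\bar h$ has to be interpreted in the barrier/viscosity sense (or one first replaces $r$ by a Greene--Wu smoothing), and simultaneously one must guarantee that the limit $h_\infty$ does not degenerate identically to zero, which is precisely what the inner-boundary normalization of $h_R$ combined with Harnack handles.
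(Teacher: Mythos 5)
Your overall architecture --- solve Dirichlet problems $\Delta h_R=\lambda h_R$ on exhausting annuli with $h_R\equiv 1$ on the inner boundary, extract a subsequential limit, invoke the Cheng--Yau gradient estimate to bound $|\nabla\log h|$, and set $\rho=-\log h$ --- is exactly the Schoen--Yau construction the paper cites, so you and the paper are on the same route.

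However, there is a genuine gap in your barrier argument for the exponential decay of $h$. You compute $\Delta\bar h-\lambda\bar h=(\alpha^2-\alpha\Delta r-\lambda)\bar h\geq 0$ for $\alpha$ large (using $\Delta r\leq C(n,k)$), and then call $\bar h$ a supersolution and conclude $h_R\leq\bar h$. This is a sign error. Under only a Ricci lower bound, Laplacian comparison gives an \emph{upper} bound $\Delta r\leq C$, hence (since $\bar h'<0$) a \emph{lower} bound $\Delta\bar h\geq(\alpha^2-\alpha C)\bar h$, so your $\bar h$ is a \emph{subsolution} of $\Delta-\lambda$, not a supersolution. The comparison principle that produces $h_R\leq\bar h$ requires $\Delta\bar h\leq\lambda\bar h$, and a decaying radial exponential (or Bessel-type) profile composed with $r$ cannot be shown to satisfy this using $\Delta r\leq C$ alone, precisely because $\Delta r$ has no useful lower bound near the cut locus. (The growing exponentials $e^{+\beta r}$ with $\beta$ small are the ones that become supersolutions under Laplacian comparison, and they do not yield decay.) Consequently the claimed bound $0<h_R\leq\bar h$ does not follow, and the key output $h(x)\leq C_1e^{-C_0 d(x,x_0)}$ --- which is exactly what makes $\rho$ proper and distance-like --- is unproved in your write-up.

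The decay has to come from a mechanism that is not a pointwise barrier against the distance function. The standard route (and what lies behind the Schoen--Yau reference) is an Agmon-type energy estimate: test $\Delta h_R=\lambda h_R$ against $\phi^2 h_R$ with $\phi=\psi(r)$, $\psi(2)=0$, to get $\lambda\int\phi^2 h_R^2\leq\int|\nabla\phi|^2 h_R^2$; choosing $\psi\sim e^{\beta(r-3)}$ with $\beta^2<\lambda$ produces exponential $L^2$ decay of $h_R$ over annuli, uniformly in $R$, and then a local mean-value inequality (De Giorgi--Nash--Moser / Li--Schoen, valid under $\Ric\geq -k$) upgrades this to the pointwise estimate. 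Everything else in your proposal --- the solvability of the Dirichlet problem, the lower bound on $h_R$ from the gradient estimate together with the inner normalization, passage to the limit, and the identities $|\nabla\rho|^2=|\nabla\log h|^2$ and $\Delta\rho=-\lambda+|\nabla\log h|^2$ --- is fine once the decay is established correctly.
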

\begin{proof}
This follows from the construction of Schoen-Yau \cite{SchoenYau1994} using eigenfunctions on $M\setminus B_{g_0}(x_0,1)$ and an interpolation with a cutoff function outside $B_{g_0}(x_0,2)$. 
\end{proof}

In what follows, we will use the strategy of Shi \cite{ShiPhd} to show that $\rho$ indeed has integral bound on Hessian under assumptions on entropy and small curvature concentration.  We begin with an observation that $\rho$ is uniformly bounded in $L^2_{loc}$ depending only on a Ricci lower bound.
\begin{lma}\label{ap-rho-L2}
Let $\rho$ be the function obtained from Lemma~\ref{SY-exhaustion}.  Then there is $C_1(n,k)$ such that for all $x\in M\setminus B_{g_0}(x_0,3)$ and $0<r\leq 1$,
$$\fint_{B_{g_0}(x,r)}|\nabla^2\rho|^2\; d\mathrm{vol}_{g_0}\leq C_1r^{-2}.$$
\end{lma}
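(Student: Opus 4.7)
\medskip

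\noindent\textbf{Proof proposal.} The natural tool is the Bochner formula applied to $\rho = -\log h$. From Lemma~\ref{SY-exhaustion} we have the pointwise bounds $|\nabla\rho|\le C_0$ and the key algebraic identity
\[
\Delta\rho = -\frac{\Delta h}{h}+\frac{|\nabla h|^2}{h^2} = -\lambda+|\nabla\rho|^2,
\]
so that $\Delta\rho$ is uniformly bounded and, crucially, $\nabla\Delta\rho = \nabla|\nabla\rho|^2 = 2\nabla^2\rho(\nabla\rho,\cdot)$, which gives $|\nabla\Delta\rho|\le 2C_0|\nabla^2\rho|$. This is the feature that will let me trade the $\langle\nabla\rho,\nabla\Delta\rho\rangle$ term in Bochner for a small multiple of $|\nabla^2\rho|^2$ plus a harmless lower-order term.

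The plan is to pick a standard cutoff $\phi\in C_c^\infty(B_{g_0}(x,2r))$ with $\phi\equiv 1$ on $B_{g_0}(x,r)$ and $|\nabla\phi|\le C/r$, and test the Bochner identity
\[
|\nabla^2\rho|^2 \;=\; \tfrac12\Delta|\nabla\rho|^2 \;-\; \langle\nabla\rho,\nabla\Delta\rho\rangle \;-\; \Ric(\nabla\rho,\nabla\rho)
\]
against $\phi^2$. Integration by parts on the $\tfrac12\Delta|\nabla\rho|^2$ term gives
\[
\tfrac12\int\phi^2\Delta|\nabla\rho|^2 \;=\; -\int 2\phi\langle\nabla\phi,\nabla^2\rho(\nabla\rho,\cdot)\rangle,
\]
which is bounded by $2C_0\int\phi|\nabla\phi||\nabla^2\rho|$. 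The second term is bounded by $2C_0^2\int\phi^2|\nabla^2\rho|$ using the identity above, and the Ricci term is bounded by $kC_0^2\int\phi^2$ using the hypothesis $\Ric\ge -k$. Two applications of Cauchy--Schwarz, $2ab\le \tfrac12 a^2+2b^2$, allow me to absorb $\tfrac34\int\phi^2|\nabla^2\rho|^2$ into the left-hand side, leaving
\[
\int_{B_{g_0}(x,r)}|\nabla^2\rho|^2\,d\mathrm{vol}_{g_0} \;\le\; C(n,k)\bigl(r^{-2}+1\bigr)\,\mathrm{Vol}_{g_0}\bigl(B_{g_0}(x,2r)\bigr).
\]

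Finally, for $r\le 1$ I use Bishop--Gromov (with $\Ric\ge -k$) to compare $\mathrm{Vol}_{g_0}(B_{g_0}(x,2r))$ with $\mathrm{Vol}_{g_0}(B_{g_0}(x,r))$ by a constant depending only on $n$ and $k$, and the factor $r^{-2}+1\le 2r^{-2}$ gives the claimed averaged bound $C_1(n,k)r^{-2}$. I do not expect a serious obstacle here: the argument uses only the pointwise data already guaranteed by Lemma~\ref{SY-exhaustion} together with the Ricci lower bound, and in particular does not yet invoke the small curvature concentration or entropy hypotheses of Theorem~\ref{Thm:exhaustion-C2}. The only delicate point is the observation that $\nabla\Delta\rho$ is controlled linearly by $\nabla^2\rho$, which is what makes the absorption step work.
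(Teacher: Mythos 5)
Your argument is correct and is essentially the same as the paper's: both come down to one integration by parts (equivalently the Bochner identity), the Ricci lower bound for the $\Ric(\nabla\rho,\nabla\rho)$ term, and the structural facts $\Delta\rho=-\lambda+|\nabla\rho|^2$ and $|\nabla\rho|\le C_0$ coming from Lemma~\ref{SY-exhaustion}. The only difference is cosmetic: you bound $|\nabla\Delta\rho|\le 2C_0|\nabla^2\rho|$ and absorb $\langle\nabla\rho,\nabla\Delta\rho\rangle$ by Cauchy--Schwarz, whereas the paper integrates that term by parts once more to get $\int\Delta\rho\,\mathrm{div}(\phi\nabla\rho)$, which the pointwise bound on $|\Delta\rho|$ renders lower order with no absorption needed; one small bookkeeping note is that applying $2ab\le\frac12a^2+2b^2$ verbatim to both of your absorption steps consumes the full $\int\phi^2|\nabla^2\rho|^2$, so you should instead use the weighted form $2ab\le\varepsilon a^2+\varepsilon^{-1}b^2$ with, say, $\varepsilon=\tfrac14$.
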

\begin{proof}
In what follows, we will use $C$ to denote constants depending only on $n,k$. 
Let $\phi$ be a cutoff function on $B_{g_0}(x,2r)$ which is identical to $1$ on $B_{g_0}(x,r)$, vanishes outside $B_{g_0}(x,2r)$ and satisfies $|\nabla \phi|^2\leq Cr^{-2}\phi$.  For convenience, we denote 
$$S=\int_{B_{g_0}(x,2r)} \phi |\nabla^2\rho|^{2}\; d\mathrm{vol}_{g_0}.$$

By Stokes Theorem, we have 
\begin{equation}
    \begin{split}
   S        = \int_{B_{g_0}(x,2r)}  \phi  \rho_{ij} ^2\; d\mathrm{vol}_{g_0}
        &=-\int_{B_{g_0}(x,2r)}  \nabla_i \left( \phi  \rho_{ij} \right) \rho_j d\mathrm{vol}_{g_0}\\
        &\leq  \int_{B_{g_0}(x,2r)} - \phi_i \rho_{ij}\rho_j -\phi \rho_{ijj}\rho_i \; d\mathrm{vol}_{g_0}\\
        &=\mathbf{I}+\mathbf{II}.
    \end{split}
\end{equation}

We examine each of them one by one. 
\begin{equation}
    \begin{split}
        \mathbf{I}&\leq  \int_{B_{g_0}(x,2r)} |\nabla \phi| |\nabla \rho| |\nabla^2 \rho|\; d\mathrm{vol}_{g_0}\\
        &\leq\left( \int_{B_{g_0}(x,2r)} \phi |\nabla^2\rho|^2 \; d\mathrm{vol}_{g_0}\right)^\frac12\left(\int_{B_{g_0}(x,2r)} \frac{|\nabla\phi|^2}{\phi} |\nabla\rho|^2 \; d\mathrm{vol}_{g_0} \right)^\frac12 \\
        &\leq Cr^{-1}S^{1/2} |B_{g_0}(x,2r)|^{1/2}
    \end{split}
\end{equation}
where we have used the choice of $\phi$ and the gradient estimates.

For $\mathbf{II}$, we apply the Ricci identity 
to arrive at 
\begin{equation}
\begin{split}
\mathbf{II}&=-\int_{B_{g_0}(x,2r)}\phi \rho_{ijj}\rho_i \; d\mathrm{vol}_{g_0}\\
&=\int_{B_{g_0}(x,2r)}-\phi \langle \nabla \Delta \rho, \nabla \rho\rangle- \phi \Ric(\nabla \rho,\nabla \rho) \; d\mathrm{vol}_{g_0}\\
&\leq  k\int_{B_{g_0}(x,2r)}\phi  |\nabla\rho|^2 \; d\mathrm{vol}_{g_0}+ \int_{B_{g_0}(x,2r)}  \Delta\rho \cdot \mathrm{div}\left(\phi \nabla\rho \right)\;d\mathrm{vol}_{g_0}\\
&\leq  k\int_{B_{g_0}(x,2r)}\phi  |\nabla\rho|^2 \; d\mathrm{vol}_{g_0}+ \int_{B_{g_0}(x,2r)}  \Delta\rho \cdot  \left(\langle \nabla\phi,\nabla\rho\rangle+ \phi \Delta\rho \right)\;d\mathrm{vol}_{g_0}\\
&\leq C(n,k)r^{-1}|B_{g_0}(x,2r)|.
\end{split}
\end{equation}
Hence $S\leq C(n,k)r^{-2}|B_{g_0}(x,2r)|$. Result follows from the volume comparison and Ricci lower bound.
\end{proof}

When $n=4$, this Morrey type estimate is sufficient for the purpose of constructing Ricci flow. For $n>4$, we improve $L^2_{loc}$ in Lemma~\ref{ap-rho-L2} to $L^n_{loc}$ by refining the method of Shi \cite{ShiPhd}. We first need a Sobolev inequality under a lower bound on entropy.
\begin{lma}\label{ent to S ineq}
For any $A>1,\tau>0$, there is $C_S(n,A,\tau)>0$ such that if $(M^n,g)$ is a complete manifold for some $x\in M$,  we have 
$\bar\nu(B_{g}(x,1), g,\tau) \geq -A$, then for any $\varphi\in C^\infty_{c}(B_{g}(x,1))$, we have 
$$\left(\int_{B_{g}(x,1)} |\varphi|^\frac{2n}{n-2} \; d\mathrm{vol}_{g}\right)^\frac{n-2}{n}\leq C_S\int_{B_{g}(x,1)}|\nabla \varphi|^2+\varphi^2 \; d\mathrm{vol}_{g} .$$

\end{lma}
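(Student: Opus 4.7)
The proof is a standard conversion from a local log-Sobolev inequality --- which is essentially what a lower bound on $\bar\nu$ encodes --- to a local Sobolev inequality. The plan is to proceed in three short steps. Applied to a test function $u \in C^\infty_c(B_g(x,1))$ normalized so that $\|u\|_{L^2} = 1$, the hypothesis $\bar\nu(B_g(x,1),g,\tau) \geq -A$ unfolds, for every $s \in (0,\tau]$, to
\[
\int u^2 \log u^2 \, d\mathrm{vol}_g \;\leq\; 4s \int |\nabla u|^2 \, d\mathrm{vol}_g \;-\; \tfrac{n}{2}\log(4\pi s) \;-\; n + A.
\]

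Next I would optimize in $s$. Writing $a := \int |\nabla u|^2$, the right-hand side has interior minimum at $s^\star = n/(8a)$, which lies in $(0,\tau]$ exactly when $a \geq n/(8\tau)$; on this range the minimum is $\tfrac{n}{2}\log a + C(n,A,\tau)$, while on the complementary range $a < n/(8\tau)$, choosing $s = \tau$ bounds the right-hand side by a constant. After undoing the normalization $\|u\|_{L^2}=1$, the two cases combine into a scale-invariant defective log-Sobolev inequality valid for all $\varphi \in C^\infty_c(B_g(x,1))$,
\[
\int \varphi^2 \log\!\Bigl(\tfrac{\varphi^2}{\|\varphi\|_{L^2}^2}\Bigr) d\mathrm{vol}_g \;\leq\; \tfrac{n}{2}\, \|\varphi\|_{L^2}^2 \, \log\!\Bigl( C_0 \tfrac{\|\nabla\varphi\|_{L^2}^2 + \|\varphi\|_{L^2}^2}{\|\varphi\|_{L^2}^2}\Bigr),
\]
with $C_0 = C_0(n,A,\tau)$.

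Finally, I would convert this log-Sobolev estimate into the desired Sobolev embedding
\[
\|\varphi\|_{L^{2n/(n-2)}}^2 \;\leq\; C_S \bigl(\|\nabla \varphi\|_{L^2}^2 + \|\varphi\|_{L^2}^2\bigr)
\]
via the classical equivalence: apply the log-Sobolev inequality to $|\varphi|^{p_k}$ along a geometric sequence $p_k \nearrow \infty$, use the elementary estimate $\log y \leq \varepsilon y + C_\varepsilon$ to turn each logarithmic bound into a reverse-H\"older step on $L^p$ norms, and iterate, exactly as in Moser/Nash iteration or in Gross's hypercontractivity argument. Detailed implementations in the form needed here, with constants depending only on $n,A,\tau$, appear in Q.~Zhang's monograph on Ricci flow and heat kernels and in Ye's paper. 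The only potentially delicate point is tracking the constants through this iteration in the third step, but since sharp constants are not required the work reduces to bookkeeping, and I would simply invoke these references rather than reproduce the computation.
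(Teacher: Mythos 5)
Your proposal follows the same route the paper takes: the paper simply cites \cite{ChanChenLee2021}, which in turn rests on \cite{Davies1989,Ye2015,Zhang2007}, and your outline (unwind $\bar\nu$ to a scale-dependent log-Sobolev inequality, optimize over $s\in(0,\tau]$, then run the Davies/Gross equivalence to upgrade to the $L^{2n/(n-2)}$ Sobolev embedding with constants depending only on $n,A,\tau$) is precisely the argument those references implement. No substantive difference.
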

\begin{proof}
The proof is identical to \cite[Lemma 2.2]{ChanChenLee2021} which is based on \cite{Davies1989,Ye2015,Zhang2007}.
\end{proof}

Theorem~\ref{Thm:exhaustion-C2} is a consequence of Lemma~\ref{ent to S ineq}  and the following proposition:

\begin{prop}\label{prop:exhaustion-C3}
For any integer $n\ge 5$ and positive constants $C_S$, 
there exists $\delta(n,C_S)>0$ 
such that the following holds: Suppose $(M,g_0,x_0)$ is a complete non-compact manifold so that 
\begin{enumerate}
\item[(i)] For any $\varphi\in C^\infty_{c}(B_{g}(x,1))$
$$\left(\int_{B_{g_0}(x,1)} |\varphi|^\frac{2n}{n-2} \; d\mathrm{vol}_{g_0}\right)^\frac{n-2}{n}\leq C_S\int_{B_{g_0}(x,1)}|\nabla \varphi|^2+\varphi^2 \; d\mathrm{vol}_{g_0};$$
\item[(ii)] $\Ric\geq -k$ for some $k\in \mathbb{R}$ 
\item[(iii)] $\displaystyle \left(\int_{B_{g_0}(x,1)}|\Rm|^{n/2} \; d\mathrm{vol}_{g_0}\right)^{2/n}\leq \delta$
\end{enumerate}
for all $x\in M$. Then the function $\rho$ constructed in Lemma~\ref{SY-exhaustion} satisfies
\begin{equation}
\int_{B_{g_0}(x,1)}|\nabla^2 \rho|^n\; d\mathrm{vol}_{g_0}\leq C
\end{equation}
for any $x\in M\setminus B_{g_0}(x_0,4)$, where $C=C(n,k,C_S)>0$.
\end{prop}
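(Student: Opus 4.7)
The plan is to run Shi's integral-type Moser iteration from \cite{ShiPhd} in a form driven only by the Sobolev inequality from hypothesis~(i) and the small $L^{n/2}$-curvature concentration from hypothesis~(iii). A preliminary move is to \emph{not} iterate on $|\nabla^2 \rho|^2$ directly: since $\Delta \rho = |\nabla \rho|^2-\lambda$, the Bochner formula for $|\nabla^2\rho|^2$ produces the cubic contribution $2\,\mathrm{tr}((\nabla^2\rho)^3)$ from $\nabla^2\Delta\rho=\nabla^2(|\nabla\rho|^2)$, for which no a priori $L^{n/2}$-control on $|\nabla^2\rho|$ is available to close the iteration. I therefore work with $h=e^{-\rho}$ from Lemma~\ref{SY-exhaustion}, which satisfies the \emph{linear} equation $\Delta h=\lambda h$. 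Because $|\nabla\log h|\le C_0$, on each ball $B_{g_0}(x,1)$ the rescaled function $\widetilde h:=h/h(x)$ has $\widetilde h\in[e^{-2C_0},e^{2C_0}]$, $|\nabla\widetilde h|\le C$, and still $\Delta\widetilde h=\lambda\widetilde h$; the identity $\rho_{ij}=-h_{ij}/h+\rho_i\rho_j$ reduces the desired bound on $|\nabla^2\rho|$ to a uniform bound on $V:=|\nabla^2\widetilde h|$ in $L^n(B_{g_0}(x,1/2))$.

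The Bochner identity combined with $\nabla^2\Delta\widetilde h=\lambda\nabla^2\widetilde h$ gives pointwise
\begin{equation*}
\tfrac12\Delta V^2=|\nabla^3\widetilde h|^2+\lambda V^2+\Rm\ast(\nabla^2\widetilde h)^{\otimes 2}+\nabla\Rm\ast\nabla\widetilde h\ast\nabla^2\widetilde h,
\end{equation*}
with no cubic $V^3$ term. For $p\ge 2$ and a cutoff $\phi\in C_c^\infty(B_{g_0}(x,1))$, I multiply by $V^{p-2}\phi^2$, integrate by parts, invoke Kato's inequality $|\nabla V|\le|\nabla^3\widetilde h|$, and perform one further integration by parts to transfer $\nabla$ off $\Rm$ in the last term. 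Cauchy--Schwarz absorption of the resulting weights $|\Rm|V^{p-2}|\nabla V|\phi^2$ and $|\Rm|V^{p-2}|\nabla^3\widetilde h|\phi^2$ into $\int V^{p-2}|\nabla V|^2\phi^2$ and $\int V^{p-2}|\nabla^3\widetilde h|^2\phi^2$ reduces the curvature contribution to a single $\int|\Rm|V^p\phi^2$ plus the benign test-function term $\int V^p(|\nabla\phi|^2+\phi^2)$.

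Applying the Sobolev inequality from hypothesis~(i) to $V^{p/2}\phi$ converts this into a Moser step
\begin{equation*}
\Bigl(\int V^{\frac{pn}{n-2}}\phi^{\frac{2n}{n-2}}\Bigr)^{\!\frac{n-2}{n}}\le C(n,C_S,p)\int V^p(|\nabla\phi|^2+\phi^2)+C(n,C_S)\int|\Rm|V^p\phi^2,
\end{equation*}
and the remaining curvature integral is treated by H\"older with exponents $n/2$ and $n/(n-2)$:
\begin{equation*}
\int|\Rm|V^p\phi^2\le\Bigl(\int_{B_{g_0}(x,1)}|\Rm|^{n/2}\Bigr)^{\!2/n}\Bigl(\int V^{\frac{pn}{n-2}}\phi^{\frac{2n}{n-2}}\Bigr)^{\!\frac{n-2}{n}}\le\delta\cdot(\text{LHS}).
\end{equation*}
Choosing $\delta=\delta(n,C_S)$ small absorbs it, yielding a clean $L^p\to L^{pn/(n-2)}$ improvement on nested balls. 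Starting from $p=2$, where the base $L^2$ bound on $V$ follows from Lemma~\ref{ap-rho-L2} via $|\widetilde h_{ij}|\le e^{2C_0}(|\nabla^2\rho|+|\nabla\rho|^2)$, a finite number of iterations (depending only on $n$) carries the exponent from $2$ up to $n$, and the $L^n_{\loc}$ bound on $V$ translates back to the claimed bound on $|\nabla^2\rho|$ via the identity above.

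The main technical hurdle is the $\nabla\Rm$ integration by parts, which has to be done without any pointwise control on $\nabla\Rm$; the absorption of the resulting weighted curvature factors forces the smallness threshold $\delta$ to be fixed only after totaling the absorption constants accumulated across the finitely many Moser steps. Once this bookkeeping is carried out carefully, every constant depends only on $n$, $k$ and $C_S$, matching the statement.
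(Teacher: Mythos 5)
Your reorganisation of the problem around the eigenfunction $h=e^{-\rho}$ (so that $\Delta h=\lambda h$ is linear and the Bochner formula for $|\nabla^2\widetilde h|^2$ carries no cubic term) is a legitimate and in some ways cleaner alternative to what the paper does. Note, however, that your stated motivation for the switch is off: the paper does run the iteration directly on $w=\sqrt{|\nabla^2\rho|^2+1}$ and does close the cubic contribution $\rho_{ij}\rho_{ki}\rho_{kj}$, by integrating by parts once more (their term $\mathbf{IV}$), so the cubic term is an inconvenience rather than an obstruction. The translation $\rho_{ij}=-h_{ij}/h+\rho_i\rho_j$ and the normalisation $\widetilde h=h/h(x)$ are fine (the gradient bound $|\nabla\log h|\le C_0$ pins $\widetilde h$ to $[e^{-C_0},e^{C_0}]$ on $B_{g_0}(x,1)$), and the base $L^2$ step does follow from Lemma~\ref{ap-rho-L2}.

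Where the proposal has a genuine gap is in the curvature bookkeeping and the claim of a clean $L^p\to L^{p\mu}$ step all the way from $p=2$ to $p=n$. After you integrate by parts on $\int V^{p-2}\phi^2\,\nabla\Rm\ast\nabla\widetilde h\ast\nabla^2\widetilde h$, the derivative falls on $V^{p-2}$ and on $\nabla^2\widetilde h$, producing terms of the type $\int|\Rm|\,V^{p-2}|\nabla V|\phi^2$ and $\int|\Rm|\,V^{p-2}|\nabla^3\widetilde h|\phi^2$. Applying Cauchy--Schwarz/Young to these does \emph{not} return you to $\int|\Rm|V^p\phi^2$: it leaves the quadratic-in-curvature remainder $\int|\Rm|^2 V^{p-2}\phi^2$. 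This is precisely the term the paper treats by H\"older with exponents $n/4$, $n/(n-4)$, followed by the pointwise comparison $V^{(p-2)n/(n-4)}\le V^{np/(n-2)}$, which \emph{requires} $V\ge 1$ (so you must in fact work with $V=\sqrt{|\nabla^2\widetilde h|^2+1}$, not $|\nabla^2\widetilde h|$) \emph{and} imposes the constraint $p\le n-2$ on every exponent used in the iteration. You cannot avoid $|\Rm|^2V^{p-2}$ by a different splitting: any H\"older/Young arrangement that absorbs the $|\nabla V|$ or $|\nabla^3\widetilde h|$ factor into the good quadratic gradient terms leaves $|\Rm|$ to the power two.

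Once $p\le n-2$ is forced, the assertion that ``starting from $p=2$, a finite number of iterations carries the exponent from $2$ up to $n$'' fails as stated. With $p_i=2\mu^i$, $\mu=n/(n-2)$, the largest $p$ allowed in a reverse H\"older step is $n-2$, and then $p\mu=n$ exactly; but $2\mu^i$ will in general overshoot $n-2$ before it can be pushed to $n$ (e.g.\ $n=6$, $\mu=3/2$: $2\to 3\to 4.5>4=n-2$). The paper resolves this by first choosing $p_0\in(2,2n/(n-2)]$ with $p_0\mu^m=n$ exactly (so the top intermediate exponent is exactly $n-2$), and then separately bridging from $p_0$ down to $2$ via the interpolation $\|w\|_{L^{p_0}}\lesssim\|w\|_{L^2}^\delta\|w\|_{L^n}^{1-\delta}$ together with the double (radius-and-exponent) iteration from \cite[Chapter 19]{Li2012}. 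Your proposal omits this bridging entirely; without it, and without the $V\ge1$ normalisation and the explicit $p\le n-2$ tracking, the Moser iteration as you describe it does not close. These are fixable but real gaps, not mere bookkeeping, and the fix essentially reintroduces the same machinery the paper already employs.
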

Here we impose a weak form of $L^2$ Sobolev inequality since this follows from the entropy lower bound from Lemma~\ref{ent to S ineq}. 

\begin{proof} Throughout the proof, 
we use $c=c(n)$ to denote some dimensional constants and $C=C(n,k,C_S)$ to denote some constants depending on $n, k, C_S$,
their exact values may differ from line to line.
We shall apply the Moser iteration as in Shi's thesis \cite[Theorem 3.1]{ShiPhd} to obtain the integral bound for $|\nabla^2\rho|$. 
We start with an elliptic equation satisfied by $\nabla^2\rho$. By the Ricci identity and the second Bianchi identity, we have in any orthonormal frame:
\begin{eqnarray*}
\Delta \left(\na^2 \rho\right)_{ij}&=&\na^2\left(\Delta \rho\right)_{ij}+R_{ik} \left(\na^2 \rho\right)_{kj}+R_{jk} \left(\na^2 \rho\right)_{ki}-2 R_{ikjl}\left(\na^2 \rho\right)_{kl}\\
&&+\left(R_{ik,j}+R_{jk,i}-R_{ij,k}\right)\rho_k.\\
&=&2\rho_{ki}\rho_{kj}+2\rho_{kij}\rho_k+R_{ik} \left(\na^2 \rho\right)_{kj}+R_{jk} \left(\na^2 \rho\right)_{ki}-2 R_{ikjl}\left(\na^2 \rho\right)_{kl}\\
&&+\left(R_{ik,j}+R_{jk,i}-R_{ij,k}\right)\rho_k,
\end{eqnarray*}
where we have used $\Delta \rho =-\lambda+|\na \rho|^2$ in the last equality, and $\lambda$ is the constant in \eqref{proph}. Hence
\begin{eqnarray*}
\Delta|\na^2 \rho|^2&=&2|\na^3 \rho|^2+2\la\na^2 \rho, \Delta\na^2\rho\ra\\
&=& 2|\na^3 \rho|^2+2\left(R_{ik,j}+R_{jk,i}-R_{ij,k}\right)\rho_{ij}\rho_k\\
&&+\Rm\ast\na^2\rho\ast\na^2\rho+4\rho_{ij}\rho_{ki}\rho_{kj}+4\rho_{ij}\rho_{kij}\rho_k.
\end{eqnarray*}
Let $x\in M\setminus B_{g_0}(x_0,3)$, $1\ge r_1>r_2>0$, $w:=\sqrt{|\na^2 \rho|^2+1}\ge 1$ and $\phi$ any smooth nonnegative, compactly supported function on $B_{g_0}(x,1)$ such that $\phi$ vanishes outside $B_{g_0}(x,r_1)$, identical to $1$ on $B_{g_0}(x,r_2)$ and satisfies 
\be\label{gradphi}
|\na\phi|\le c\phi^{\beta} (r_1-r_2)^{-1}, 
\ee
where $\beta:=\frac{n-3}{n-2}\in(0,1)$ and $\beta\ge 1/2$. 
This can be found by compositing a linear function. 
Multiply the differential equation for $w$ by 
$-w^{p-2}\phi$ and integrate the equation over $B_{g_0}(x,1)$,


\begin{eqnarray*}
\int_{B_{g_0}(x,1)}-\phi w^{p-2}\Delta (w^2)\; d\mathrm{vol}_{g_0}&\le& -2\int_{B_{g_0}(x,1)}\phi w^{p-2}|\na^3 \rho|^2 d\mathrm{vol}_{g_0}\\
&&+c\int_{B_{g_0}(x,1)} \phi w^{p}|\Rm|\; d\mathrm{vol}_{g_0}\\
&&-2\int_{B_{g_0}(x,1)} \phi w^{p-2}\left(R_{ik,j}+R_{jk,i}-R_{ij,k}\right)\rho_{ij}\rho_k\; d\mathrm{vol}_{g_0}\\
&&-4\int_{B_{g_0}(x,1)}\phi w^{p-2}\rho_{ij}\rho_{ki}\rho_{kj}\; d\mathrm{vol}_{g_0}\\
&&-4\int_{B_{g_0}(x,1)}\phi w^{p-2}\rho_{ij}\rho_{kij}\rho_{k}\; d\mathrm{vol}_{g_0}\\
&=:& \mathbf{I}+\mathbf{II}+\mathbf{III}+\mathbf{IV}+\mathbf{V}
\end{eqnarray*}

where 
$p\ge p_0(n)$ and $p_0(n)>2$
is a dimensional constant to be determined. We also require 
$p\le n-2$. 
Using integration by part and Cauchy Schwarz inequality, we have,

\begin{eqnarray*}
\int_{B_{g_0}(x,1)}-\phi w^{p-2}\Delta (w^2)\; d\mathrm{vol}_{g_0}&=&
\frac{8(p-2)}{p^{2}}\int_{B_{g_0}(x,1)}|\sqrt{\phi}\na (w^{p/2})|^2\; d\mathrm{vol}_{g_0}\\
&&+\int_{B_{g_0}(x,1)}2\la\na\phi, \na w\ra w^{p-1}\; d\mathrm{vol}_{g_0}\\
&\ge&\frac{2(p-2)}{p^2}\int_{B_{g_0}(x,1)}\left|\na\left(\sqrt{\phi} w^{p/2}\right)\right|^2\; d\mathrm{vol}_{g_0}\\
&&-\frac{2p^2-4p+4}{p^2(p-2)}\int_{B_{g_0}(x,1)}\frac{|\na \phi|^2w^{p}}{\phi}\; d\mathrm{vol}_{g_0}\\
&\ge&\frac{2(p-2)}{p^2 C_S}\left(\int_{B_{g_0}(x,1)} \phi^{\frac{n}{n-2}}w^{\frac{np}{n-2}}\; d\mathrm{vol}_{g_0} \right)^{\frac{n-2}{n}}\\
&&-\frac{c(2p^2-4p+4)}{p^2(p-2)}\int_{B_{g_0}(x,r_1)}\frac{w^{p}}{(r_1-r_2)^2}\; d\mathrm{vol}_{g_0}\\
&&-\frac{2(p-2)}{p^2}\int_{B_{g_0}(x,1)} \phi w^{p}\; d\mathrm{vol}_{g_0}.
\end{eqnarray*}


\begin{eqnarray*}
\mathbf{II}&=&c\int_{B_{g_0}(x,1)} \phi w^{p}|\Rm|\; d\mathrm{vol}_{g_0}\\
&\le& c\left(\int_{B_{g_0}(x,1)}|\Rm|^{\frac{n}{2}}\; d\mathrm{vol}_{g_0}\right)^{\frac{2}{n}}\left(\int_{B_{g_0}(x,1)}\phi^{\frac{n}{n-2}}w^{\frac{np}{n-2}}\; d\mathrm{vol}_{g_0}\right)^{\frac{n-2}{n}}\\
&\le& c\delta\left(\int_{B_{g_0}(x,1)}\phi^{\frac{n}{n-2}}w^{\frac{np}{n-2}}\; d\mathrm{vol}_{g_0}\right)^{\frac{n-2}{n}}.
\end{eqnarray*}

We have used the smallness of $L^{n/2}$ norm of $|\Rm|$.
Similarly by the definition of $\rho$, \eqref{proph} and \eqref{gradphi},


\begin{eqnarray*}
\mathbf{III}&=&-2\int_{B_{g_0}(x,1)} \phi w^{p-2}\left(R_{ik,j}+R_{jk,i}-R_{ij,k}\right)\rho_{ij}\rho_k\; d\mathrm{vol}_{g_0}\\
&=& \int_{B_{g_0}(x,1)} \phi w^{p-2}\na \Ric\ast\na^2 \rho\ast\na \rho\; d\mathrm{vol}_{g_0}\\
&=& -\int_{B_{g_0}(x,1)} \na(\phi) w^{p-2}\Ric\ast\na^2 \rho\ast\na \rho\; d\mathrm{vol}_{g_0}\\
&&-\int_{B_{g_0}(x,1)} \phi \na(w^{p-2}) \Ric\ast\na^2 \rho\ast\na \rho\; d\mathrm{vol}_{g_0}\\
&&-\int_{B_{g_0}(x,1)} \phi w^{p-2} \Ric\ast\na^3 \rho\ast\na \rho\; d\mathrm{vol}_{g_0}\\
&&-\int_{B_{g_0}(x,1)} \phi w^{p-2} \Ric\ast\na^2 \rho\ast\na^2 \rho\; d\mathrm{vol}_{g_0}\\
&\le& \frac{C}{r_1-r_2}\int_{B_{g_0}(x,1)} \phi^{\beta}|\Rm|w^{p-1}\; d\mathrm{vol}_{g_0}+Cp\int_{B_{g_0}(x,1)} \phi |\Rm| w^{p-2} |\na^3\rho|\; d\mathrm{vol}_{g_0}\\
&&+c\int_{B_{g_0}(x,1)} \phi w^{p} |\Rm|\; d\mathrm{vol}_{g_0}\\
&\le& \frac{C}{r_1-r_2}\int_{B_{g_0}(x,1)} \phi^{\beta} |\Rm|w^{p-1}\; d\mathrm{vol}_{g_0}+Cp\int_{B_{g_0}(x,1)} \phi |\Rm| w^{p-2} |\na^3\rho|\; d\mathrm{vol}_{g_0}\\
&&+c\delta\left(\int_{B_{g_0}(x,1)}\phi^{\frac{n}{n-2}}w^{\frac{np}{n-2}}\; d\mathrm{vol}_{g_0}\right)^{\frac{n-2}{n}}.
\end{eqnarray*}

We then handle the terms in the last inequality separately. By virtue of Young's inequality,  $w\ge 1$ and $\beta= \frac{n-3}{n-2}$, 


\begin{eqnarray*}
&&\frac{C}{r_1-r_2}\int_{B_{g_0}(x,1)} \phi^{\beta} |\Rm|w^{p-1}\; d\mathrm{vol}_{g_0}\\
&\le&\frac{C}{r_1-r_2}\left(\int_{B_{g_0}(x,1)}  |\Rm|^{\frac{n}{2}}\; d\mathrm{vol}_{g_0}\right)^{\frac{2}{n}}\left(\int_{B_{g_0}(x,1)} \phi^{\frac{n\beta}{n-2}} w^{\frac{(p-1)n}{n-2}}\; d\mathrm{vol}_{g_0}\right)^{\frac{n-2}{n}}\\
&\leq &\frac{C\delta}{r_1-r_2}\left(\int_{B_{g_0}(x,1)} \phi^{\frac{n\beta}{n-2}} w^{\frac{(p-1)n}{n-2}}\; d\mathrm{vol}_{g_0}\right)^{\frac{n-2}{n}}\\
&\leq &\frac{C\delta}{(r_1-r_2)^{\frac{1}{1-\beta}}} \left[\mathrm{Vol}_{g_0}(B_{g_0}(x,r_1))\right]^{\frac{n-2}{n}}+\delta\left(\int_{B_{g_0}(x,1)} \phi^{\frac{n}{n-2}} w^{\frac{(p-1)n}{\beta(n-2)}}\; d\mathrm{vol}_{g_0}\right)^{\frac{n-2}{n}}\\
&\leq &\frac{C\delta}{(r_1-r_2)^{\frac{1}{1-\beta}}} \left[\mathrm{Vol}_{g_0}(B_{g_0}(x,r_1))\right]^{\frac{n-2}{n}}+\delta\left(\int_{B_{g_0}(x,1)} \phi^{\frac{n}{n-2}} w^{\frac{np}{(n-2)}}\; d\mathrm{vol}_{g_0}\right)^{\frac{n-2}{n}}.
\end{eqnarray*}

Using H\"{o}lder and Schwarz inequalities,


\begin{eqnarray*}
&&Cp\int_{B_{g_0}(x,1)} \phi |\Rm| w^{p-2} |\na^3\rho|\; d\mathrm{vol}_{g_0}\\
&\le& Cp\left(\int_{B_{g_0}(x,1)}\phi |\Rm|^2w^{p-2}
\; d\mathrm{vol}_{g_0} \right)^{\frac{1}{2}}\left(\int_{B_{g_0}(x,1)}\phi  w^{p-2}|\na^3\rho|^2
\; d\mathrm{vol}_{g_0} \right)^{\frac{1}{2}}\\
&\le&\frac{Cp^2}{\varepsilon}\int_{B_{g_0}(x,1)}\phi |\Rm|^2w^{p-2}\; d\mathrm{vol}_{g_0}+\varepsilon \int_{B_{g_0}(x,1)}\phi  w^{p-2}|\na^3\rho|^2
\; d\mathrm{vol}_{g_0}\\
&\le&\frac{Cp^2}{\varepsilon}\left(\int_{B_{g_0}(x,1)}|\Rm|^{\frac{n}{2}}\; d\mathrm{vol}_{g_0}\right)^{\frac{4}{n}}\left(\int_{B_{g_0}(x,1)}\phi^{\frac{n}{n-4}}w^{\frac{(p-2)n}{n-4}}\; d\mathrm{vol}_{g_0}\right)^{\frac{n-4}{n}}\\
&&+\varepsilon \int_{B_{g_0}(x,1)}\phi  w^{p-2}|\na^3\rho|^2
\; d\mathrm{vol}_{g_0}\\
&\le&\delta^2\left(\int_{B_{g_0}(x,1)}\phi^{\frac{n}{n-4}}w^{\frac{(p-2)n}{n-4}}\; d\mathrm{vol}_{g_0}\right)^{\frac{n-2}{n}}+\frac{C\, p^{n-2}\delta^2}{\varepsilon^{\frac{n-2}{2}}}+\varepsilon \int_{B_{g_0}(x,1)}\phi  w^{p-2}|\na^3\rho|^2
\; d\mathrm{vol}_{g_0}\\
&\le&\delta^2\left(\int_{B_{g_0}(x,1)}\phi^{\frac{n}{n-2}}w^{\frac{np}{n-2}}\; d\mathrm{vol}_{g_0}\right)^{\frac{n-2}{n}}+\frac{C\, p^{n-2}\delta^2}{\varepsilon^{\frac{n-2}{2}}}+\varepsilon \int_{B_{g_0}(x,1)}\phi  w^{p-2}|\na^3\rho|^2
\; d\mathrm{vol}_{g_0},\\
\end{eqnarray*}

we have used the facts that $w\ge 1$ and 
$p\le n-2$
in the last inequality. In conclusion, it holds that


\begin{eqnarray*}
\mathbf{III}&\le& c\delta\left(\int_{B_{g_0}(x,1)}\phi^{\frac{n}{n-2}}w^{\frac{np}{n-2}}\; d\mathrm{vol}_{g_0}\right)^{\frac{n-2}{n}}+\frac{C\delta}{(r_1-r_2)^{\frac{1}{1-\beta}}} \left[\mathrm{Vol}_{g_0}(B_{g_0}(x,r_1))\right]^{\frac{n-2}{n}}\\
&&+\frac{C\, p^{n-2}\delta^2}{\varepsilon^{\frac{n-2}{2}}}+\varepsilon \int_{B_{g_0}(x,1)}\phi  w^{p-2}|\na^3\rho|^2
\; d\mathrm{vol}_{g_0}.\\
\end{eqnarray*}

Similarly by \eqref{proph},


\begin{eqnarray*}
\mathbf{IV}&=&-4\int_{B_{g_0}(x,1)}\phi w^{p-2}\rho_{ij}\rho_{ki}\rho_{kj}\;d\mathrm{vol}_{g_0}\\
&\le&4\int_{B_{g_0}(x,1)}\phi_j w^{p-2}\rho_i\rho_{ki}\rho_{kj}\;d\mathrm{vol}_{g}+4\int_{B_{g_0}(x,1)}\phi w^{p-2}\rho_i\rho_{kij}\rho_{kj}\;d\mathrm{vol}_{g_0}\\
&&+4\int_{B_{g_0}(x,1)}\phi w^{p-2}\rho_i\rho_{ki}\rho_{kjj}\;d\mathrm{vol}_{g_0}
+C(p-2)\int_{B_{g_0}(x,1)} \phi w^{p-4}|\na^3\rho||\na^2 \rho|^3\;d\mathrm{vol}_{g_0}\\
&\le& \frac{C}{r_1-r_2}\int_{B_{g_0}(x,1)}\sqrt{\phi} w^{p}\;d\mathrm{vol}_{g_0}+Cp\int_{B_{g_0}(x,1)}\phi w^{p-1}|\na^3\rho|\;d\mathrm{vol}_{g_0}\\
&\le&\frac{C}{r_1-r_2}\int_{B_{g_0}(x,1)}\sqrt{\phi} w^{p}\;d\mathrm{vol}_{g_0}+
\varepsilon\int_{B_{g_0}(x,1)}\phi w^{p-2}|\na^3\rho|^2\;d\mathrm{vol}_{g_0}\\
&&+\frac{Cp^2}{\varepsilon}\int_{B_{g_0}(x,1)}\phi w^{p}\;d\mathrm{vol}_{g_0}.
\end{eqnarray*}


\begin{eqnarray*}
\mathbf{V}&=&-4\int_{B_{g_0}(x,1)}\phi w^{p-2}\rho_{ij}\rho_{kij}\rho_{k}\; d\mathrm{vol}_{g_0}\\
&\le& C\int_{B_{g_0}(x,1)}\phi w^{p-1}|\na^3\rho|\;d\mathrm{vol}_{g_0}\\
&\le& \varepsilon\int_{B_{g_0}(x,1)}\phi w^{p-2}|\na^3\rho|^2\;d\mathrm{vol}_{g_0}+\frac{C}{\varepsilon}\int_{B_{g_0}(x,1)}\phi w^{p}\;d\mathrm{vol}_{g_0}.
\end{eqnarray*}

By the volume comparison theorem and taking $\varepsilon=\frac{1}{4}$, $\delta>0$ sufficiently small depending on $n$ and $C_S$, 
we have the following reverse H\"{o}lder inequality


\be\label{RH}
\left(\int_{B_{g_0}(x,r_2)} w^{\frac{np}{n-2}}\; d\mathrm{vol}_{g_0} +1\right)^{\frac{n-2}{np}}\le\frac{C^{\frac{1}{p}}}{(r_1-r_2)^{\frac{1}{(1-\beta)p}}}\left(\int_{B_{g_0}(x,r_1)}w^{p}\;d\mathrm{vol}_{g_0}+1\right)^{\frac{1}{p}}.
\ee

Let $\mu:=\frac{n}{n-2}$. For $n\ge 5$, there exist unique 
$p_0=p_0(n)\in (2,2n/(n-2)]$
and integer $m=m(n)\ge 1$ such that 
$\mu^{m}p_0=n$.
It can be seen that 
$ p_0(n)\le 2n/(n-2)\le (n-2)$
for $n\ge 6$. Moreover, 
$p_0(5)=3=5-2\le 10/3$. For any $r\in (0,1]$ and $\theta\in (0,1)$, we let $\alpha:=(1-\theta)/(2-\theta)\in (0,1/2)$ and for $m+1\ge i\ge 0$,
\[
r_i:=2r-r\sum_{j=0}^{i}\alpha^{j}.
\]
It can be seen that $r_0=r$, $r_i$ is decreasing and $\lim_{i\to\infty} r_i=\theta r$. For any integer $i\ge 0$, we define a sequence of positive numbers as follows

\[
Q_i:=\left(\int_{B_{g_0}(x,r_i)}w^{p_0\mu^{i}}\;d\mathrm{vol}_{g_0}+1\right)^{\frac{1}{p_0\mu^{i}}}.
\]

By substituting $p=p_0\mu^{i}$, $r_1=r_i$ and $r_2=r_{i+1}$ into \eqref{RH} for $i=0, 1, \cdots, m-1$, we have

\[
Q_{i+1}\le \frac{C^{\frac{1}{p_0\mu^{i}}}}{r^{\frac{1}{(1-\beta)p_0\mu^{i}}}\alpha^{\frac{i+1}{(1-\beta)p_0\mu^{i}}}} Q_i.
\]

Since $\mu>1$, 
$\sum_{i=0}^{\infty}\frac{1}{p_0\mu^{i}}$ and $\sum_{i=0}^{\infty}\frac{i+1}{p_0\mu^{i}}$ converge. We may iterate the above inequality from $i=0$ to $m-1$ to get

\be\label{FIE}
\begin{split}
\left(\int_{B_{g_0}(x,\theta r)}w^{n}\;d\mathrm{vol}_{g_0}+1\right)^{\frac{1}{n}}&\le \frac{2^{\sum_{i=0}^{\infty}\frac{i+1}{(1-\beta)p_0\mu^{i}}}C^{\sum_{i=0}^{\infty}\frac{1}{p_0\mu^{i}}}}{r^{\sum_{i=0}^{\infty}\frac{1}{(1-\beta)p_0\mu^{i}}}(1-\theta)^{\sum_{i=0}^{\infty}\frac{i+1}{(1-\beta)p_0\mu^{i}}}}\\
&\quad\times\left(\int_{B_{g_0}(x,r)}w^{p_0}\;d\mathrm{vol}_{g_0}+1\right)^{\frac{1}{p_0}}.
\end{split}
\ee

We shall iterate $r$ and $\theta$ infinitely many times as in \cite[Chapter 19]{Li2012} so that we can replace $p_0$ by $2$ in \eqref{FIE}. Since $n\ge 5$,  we can fix a small $\delta=\delta(n)\in (0,1)$ such that
$p_0\delta<2$ and $2p_0(1-\delta)/(2-p_0\delta)< n$. Then by the H\"{o}lder inequality and volume comparison under $\Ric\ge -k$,


\begin{eqnarray*}
\left(\int_{B_{g_0}(x,r)}w^{p_0}\;d\mathrm{vol}_{g_0}\right)^{\frac{1}{p_0}}&\le&
\left(\int_{B_{g_0}(x,r)}w^{2}\;d\mathrm{vol}_{g_0}\right)^{\frac{\delta}{2}}\left(\int_{B_{g_0}(x,r)}w^{\frac{2p_0(1-\delta)}{2-p_0\delta}}\;d\mathrm{vol}_{g_0}\right)^{\frac{2-p_0\delta}{2p_0}}\\
&\le&\left(\int_{B_{g_0}(x,r)}w^{2}\;d\mathrm{vol}_{g_0}\right)^{\frac{\delta}{2}}\left(\int_{B_{g_0}(x,r)}w^{n}\;d\mathrm{vol}_{g_0}\right)^{\frac{(1-\delta)}{n}}\\
&&\times\mathrm{Vol}_{g_0}(B_{g_0}(x,r))^{\frac{[n-2p_0(1-\delta)/(2-p_0\delta)](2-p_0\delta)}{2np_0}}\\
&\le& C_1\left(\int_{B_{g_0}(x,r)}w^{2}\;d\mathrm{vol}_{g_0}\right)^{\frac{\delta}{2}}\left(\int_{B_{g_0}(x,r)}w^{n}\;d\mathrm{vol}_{g_0}\right)^{\frac{(1-\delta)}{n}},
\end{eqnarray*}

where $C_1=C_1(n,k)\ge 1$. Thanks to \eqref{FIE},

\be\label{p=1 case}\begin{split}
\left(\int_{B_{g_0}(x,\theta r)}w^{n}\;d\mathrm{vol}_{g_0}+1\right)^{\frac{1}{n}}&\le\quad  \frac{C_{\infty}}{r^{\alpha_1}(1-\theta)^{\alpha_2}}\left(\int_{B_{g_0}(x,r)}w^{2}\;d\mathrm{vol}_{g_0}+1\right)^{\frac{\delta}{2}}\\
&\quad\quad\times 
\left(\int_{B_{g_0}(x,r)}w^{n}\;d\mathrm{vol}_{g_0}+1\right)^{\frac{1-\delta}{n}},
\end{split}
\ee

where 

\bee
 \alpha_1:=\sum_{i=0}^{\infty}\frac{1}{(1-\beta)p_0\mu^{i}}, \alpha_2:=\sum_{i=0}^{\infty}\frac{i+1}{(1-\beta)p_0\mu^{i}} \text{  and  } C_{\infty}:=2^{2+\alpha_2}C^{\sum_{i=0}^{\infty}\frac{1}{p_0\mu^{i}}}C_1.
\eee

We then choose $\theta$ and $r$ as in \cite[Chapter 19]{Li2012}. Let 
\[
\tau_{-1}:=\frac{1}{2},\quad \tau_{i-1}:=\frac{1}{2}+\sum_{j=1}^{i}\left(\frac{1}{2}\right)^{j+1}, \text{  for  } i\ge 1.
\]
$\gamma_i:=\tau_{i-1}/\tau_i\in (0,1)$ for $i\ge 0$ and $\tau_{i-1}$ is a strictly increasing sequence. Moreover, $\tau_{i-1}\in[1/2,1)$ and 
\[
1-\gamma_i=\frac{\tau_i-\tau_{i-1}}{\tau_i}\ge 2^{-(i+2)}.
\]
We choose $r=\tau_i$ and $\theta=\gamma_i$ in \eqref{p=1 case}
\bee
\begin{split}
\left(\int_{B_{g_0}(x,\tau_{i-1})}w^{n}\;d\mathrm{vol}_{g_0}+1\right)^{\frac{1}{n}}&\le\quad  \frac{C_{\infty}A_0^{\delta}}{\tau_i^{\alpha_1}(1-\gamma_i)^{\alpha_2}} 
\left(\int_{B_{g_0}(x,\tau_i)}w^{n}\;d\mathrm{vol}_{g_0}+1\right)^{\frac{1-\delta}{n}}\\
&\le 2^{\alpha_1}2^{\alpha_2(i+2)}C_{\infty}A_0^{\delta}\left(\int_{B_{g_0}(x,\tau_i)}w^{n}\;d\mathrm{vol}_{g_0}+1\right)^{\frac{1-\delta}{n}},
\end{split}
\eee
where $A_0:=\left(\int_{B_{g_0}(x,1)}w^{2}\;d\mathrm{vol}_{g_0}+1\right)^{\frac{1}{2}}$. Iterating the above inequality in $i$, we have
\bee
\begin{split}
\left(\int_{B_{g_0}(x,\tau_{-1})}w^{n}\;d\mathrm{vol}_{g_0}\right)^{\frac{1}{n}}&\le 2^{\alpha_1/\delta}C_{\infty}^{1/\delta}2^{\alpha_2\sum_{j=0}^{\infty}(j+2)(1-\delta)^{j}}A_0, 
\end{split}
\eee
here we have used the following inequalities:
\bee
\begin{split}
1&\le \liminf_{i\to\infty}\left(\int_{B_{g_0}(x,\tau_i)}w^{n}\;d\mathrm{vol}_{g_0}+1\right)^{\frac{(1-\delta)^{i}}{n}}\\
&\le \lim_{i\to\infty}\left(\int_{B_{g_0}(x,1)}w^{n}\;d\mathrm{vol}_{g_0}+1\right)^{\frac{(1-\delta)^{i}}{n}}=1.
\end{split}
\eee
We proved the $L^{n}$ norm bound for $\na^2\rho$ over balls of radius $1/2$. The estimate on balls of radius $1$ then follows from Lemma \ref{ap-rho-L2}, the Ricci curvature lower bound and a covering argument. This completes the proof of the proposition for $n\ge 5$.
\end{proof}

We end this section by using $\rho$ to establish the density of Sobolev space $W^{2,p}_0(M)$ in $W^{2,p}(M)$. This is motivated by the works \cite{ImperaRimoldiVeronelli2022,HondaMariRimoldiVeronelli}. As pointed out in \cite[Theorem 1.9]{HondaMariRimoldiVeronelli}, the density is not necessarily true in general. 

\begin{thm}\label{densityThm}
For any $n\geq 5$ and $C_S>0$, there is 
$\delta(n,C_S)>0$ such that if $(M,g)$ is a complete non-compact Riemannian manifold with
\begin{enumerate}
\item[(i)] For any $\varphi\in C^\infty_{c}(B_{g}(x,1))$,
$$\left(\int_{B_{g}(x,1)} |\varphi|^\frac{2n}{n-2} \; d\mathrm{vol}_{g}\right)^\frac{n-2}{n}\leq C_S\int_{B_{g}(x,1)}|\nabla \varphi|^2+\varphi^2 \; d\mathrm{vol}_{g};$$
\item[(ii)] $\displaystyle \left(\int_{B_{g}(x,1)}|\Rm|^{n/2} \; d\mathrm{vol}_{g}\right)^{2/n}\leq \delta$;
\item[(iii)] $\Ric\geq -k$ for some $k\in \mathbb{R}$;
\item[(iv)] 
There are $x_0\in M$ and constant $C>0$ such that for all $r>0$,
$$\mathrm{Vol}_g\left(B_g(x_0,r) \right)\leq Cr^n.$$
\end{enumerate}
then $ W^{2,p}(M)=W^{2,p}_0(M)$ for all $n>p\geq 2$. 
\end{thm}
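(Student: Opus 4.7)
The plan is to approximate each $u \in W^{2,p}(M)$ in $W^{2,p}$-norm by compactly supported $W^{2,p}$ functions; since a compactly supported $W^{2,p}$ function on a smooth manifold can itself be approximated by elements of $C^\infty_c(M)$ via a partition of unity and coordinate-chart mollification, this will yield $u \in W^{2,p}_0(M)$. The key tool is the proper exhaustion $\rho$ produced by Proposition~\ref{prop:exhaustion-C3}, whose hypotheses are exactly (i)--(iii). Fix a non-increasing $\eta \in C^\infty(\mathbb{R};[0,1])$ with $\eta \equiv 1$ on $(-\infty,1]$ and $\eta \equiv 0$ on $[2,\infty)$, and for $R \gg 1$ set $\phi_R := \eta(\rho/R)$ and $A_R := \{R \le \rho \le 2R\}$. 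By the estimates in~\eqref{rho prop}, $\phi_R$ is compactly supported with $\phi_R \to 1$ pointwise, $|\nabla \phi_R| \le CR^{-1}$, and
\begin{equation*}
|\nabla^2 \phi_R| \le CR^{-2}\mathbf{1}_{A_R} + CR^{-1}|\nabla^2 \rho|\mathbf{1}_{A_R}.
\end{equation*}

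Setting $u_R := \phi_R u$ and expanding $\nabla^j(u - u_R)$ for $j=0,1,2$, all terms involving the factor $(1-\phi_R)$ vanish in $L^p$ by dominated convergence from $u, \nabla u, \nabla^2 u \in L^p(M)$ and the pointwise convergence $\phi_R \to 1$. The terms involving $\nabla \phi_R$ or the bounded piece $CR^{-2}\mathbf{1}_{A_R}$ of $\nabla^2\phi_R$ are each dominated in $L^p$ by $CR^{-1}$ times a fixed element of $L^p(M)$, and hence also vanish. The only delicate contribution is
\begin{equation*}
R^{-p}\int_{A_R} |u|^p\, |\nabla^2 \rho|^p\, d\mathrm{vol}_g,
\end{equation*}
and this is precisely where the integral Hessian bound of Proposition~\ref{prop:exhaustion-C3} enters.

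To estimate it, cover $A_R$ by unit balls $\{B_g(x_i,1)\}$ with uniformly bounded multiplicity, using a Vitali-type selection together with the volume doubling obtained from the Ricci lower bound (iii). On each ball, Hölder with conjugate exponents $n/p$ and $n/(n-p)$ combined with the $L^n$-bound on $\nabla^2 \rho$ yields
\begin{equation*}
\int_{B_g(x_i,1)} |u|^p\, |\nabla^2 \rho|^p\, d\mathrm{vol}_g \le C\, \|u\|_{L^{pn/(n-p)}(B_g(x_i,1))}^p.
\end{equation*}
The $L^2$-Sobolev inequality (i) and doubling upgrade, by a standard Poincaré self-improvement argument on Dirichlet spaces, to a uniform local embedding $W^{1,p}(B_g(x,1)) \hookrightarrow L^{pn/(n-p)}(B_g(x,1))$ for any $p \in [2,n)$. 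Summing with bounded overlap and using $u \in W^{1,p}(M)$,
\begin{equation*}
R^{-p}\int_{A_R}|u|^p|\nabla^2\rho|^p\,d\mathrm{vol}_g \le CR^{-p}\bigl(\|u\|_{L^p(A_R')}^p + \|\nabla u\|_{L^p(A_R')}^p\bigr) \longrightarrow 0
\end{equation*}
as $R \to \infty$, where $A_R'$ denotes a fixed thickening of $A_R$ and the limit follows from absolute continuity of the $L^p$ integral, since $\mathbf{1}_{A_R'} \to 0$ pointwise. Thus $u_R \to u$ in $W^{2,p}(M)$, and the mollification step completes the proof.

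The principal obstacle is controlling the annular term $\|u\,\nabla^2\rho\|_{L^p(A_R)}$: the integral Hessian bound of Proposition~\ref{prop:exhaustion-C3} is exactly the input that allows the Hölder--Sobolev pairing to close. The classical pointwise construction of Shi~\cite{ShiPhd} would give $\nabla^2 \rho \in L^\infty$ and render this step trivial, but it requires bounded curvature, which is precisely the hypothesis this section is designed to remove.
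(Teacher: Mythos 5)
Your argument is correct and follows the same overall strategy as the paper---cut off $u$ against the exhaustion $\rho$ from Proposition~\ref{prop:exhaustion-C3}, and control the annular Hessian term by pairing H\"older against the local $L^n$ bound on $\nabla^2\rho$---but it differs in where the H\"older inequality and the summation over a covering are applied, and this difference is substantive. The paper applies H\"older \emph{globally} over the annulus, which produces the factor $\bigl(\int_{A_R}|\nabla^2\rho|^n\bigr)^{p/n}$; to bound this it must count the unit balls covering $A_R$, and the bound $N\le CR^n$ requires the polynomial volume growth (iv) to cancel the $R^{-p}$ from the cutoff, after which the paper still needs to establish the global integrability $f\in L^{np/(n-p)}(M)$ by a separate partition-of-unity argument. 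You instead apply H\"older \emph{per unit ball}, where the $L^n$ norm of $\nabla^2\rho$ is uniformly bounded, then invoke the local $L^p$-Sobolev embedding (obtained from (i) and the Ricci lower bound, exactly as the paper does via \cite[Theorem 14.3]{Li2012}) to convert $\|u\|_{L^{pn/(n-p)}(B_i)}^p$ into a $W^{1,p}$ quantity, and finally sum using only the bounded multiplicity of the cover (a consequence of (iii) alone). This reorganization makes the ball count irrelevant, so the $R^{-p}$ prefactor is left intact and the estimate closes via absolute continuity of the $L^p$ integral over the shrinking annulus. In particular your argument never invokes hypothesis (iv), so it gives a cleaner and slightly stronger version of the same result; the only small point worth tightening is that the local Sobolev inequality should be applied on a slightly smaller inner ball (e.g.\ $B_g(x_i,1/2)\subset B_g(x_i,1)$) after inserting a cutoff, since (i) holds only on balls of radius one, but this is a routine adjustment and the paper glosses over the same point.
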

\begin{proof}
By \cite{GGP2017}, it suffices to show that for any $f\in C^\infty\cap W^{2,p}$, we can find $f_i\in C^\infty_{loc}$ so that $f_i\to f$ in $W^{2,p}$. 

Fix $x_0\in M$ and let $R>0$ be large. Let $\rho$ be the function obtained from  Theorem~\ref{Thm:exhaustion-C2}. Let $\phi$ be a cutoff function on $[0,+\infty)$ so that $\phi$ vanishes outside 
$(-\infty,10]$, identical to $1$ on 
$(-\infty, 8]$ and satisfies $0\leq -\phi'\leq 10^3$ and $|\phi''|\leq 10^3$. Define $f_R=f\cdot \Phi_R$ where $\Phi_R(x)=\phi(\frac{\rho(x)}{R})$ for $x\in M$. Clearly, $f_R$ is compactly supported on $\{x\in M: 0\leq \rho(x)\leq 10R\}\subset B_{g}(x_0,10CR+CC_1)$ 
, where $C$ and $C_1$ are the constants from Theorem~\ref{Thm:exhaustion-C2}.
We now claim that $f_R\to f$ as $R\to +\infty$ in $W^{2,p}(M)$. We only consider the highest order term since the lower order one can be treated in a similar but simpler argument. 
\begin{equation}
    \begin{split}
        \int_M |\nabla^2(f_R-f)|^p \;d\mathrm{vol}_g 
        &=\int_M |\nabla^2 ((\Phi_R-1)f ) |^p \;d\mathrm{vol}_g \\
        &\leq c(p)
        \int_M |f \nabla^2 \Phi_R|^p +2|\nabla f|^p|\nabla \Phi_R|^p+ |\Phi_R-1|^p |\nabla^2 f|^p\; d\mathrm{vol}_g\\
        &\leq \mathbf{I}+\mathbf{II}+\mathbf{III}.
    \end{split}
\end{equation}

Clearly, $\mathbf{III}=o(1)$ as $R\to+\infty$ as $\nabla^2 f\in L^p$. On the other hand, we have
\begin{equation}
    \begin{split}
        |\nabla \Phi_R|^2&=\frac{|\phi'|^2}{R^2} |\nabla \rho |^2\leq \frac{C_n}{R^2}
    \end{split}
\end{equation}
and $\mathrm{supp}\left(\nabla\Phi_R \right)\subset \{ 8R\leq  \rho\leq 10R\}\subset A_g(x_0,8R,10CR+CC_1)$. 
Hence, $\mathbf{II}=o(1)$ as $R\to +\infty$ using again $\nabla f\in L^p$. 

It remains to consider $\mathbf{I}$. Similar to the gradient estimate,
\begin{equation}
|\nabla^2 \Phi_R|=\left|\frac{\phi'}{R}\nabla^2\rho+ \frac{\phi''}{R^2}\nabla\rho \otimes \nabla\rho\right|.
\end{equation}

It suffices to handle the Hessian term as the gradient term can be estimated as before. If we let $\{x_i\}_{i=1}^N$ be a maximal $1$-net of $A_g(x_0,8R,10CR +CC_1)$ in the sense that $ \{x_i\}_{i=1}^N\subset A_g(x_0,8R,10CR +CC_1)\subset \bigcup_{i=1}^N
B_g(x_i,1)$ and $\{B_g(x_i,\frac14)\}_{i=1}^N$ are all disjoint balls in $A_g(x_0,8R,10CR+CC_1)$. We claim that $N\leq CR^n$ for some constant $C>0$ independent of $R$. If $k=0$, this follows from standard volume doubling argument. By \cite[Theorem 3.6]{Wang2018} and the equivalence of $\bar \nu$ of a fixed scale with $L^2$ Sobolev inequality \cite[Theorem 4.2.1]{ZhangBook}, the assumptions (i) and (iii) imply that for some constant $v_0>0$, $\mathrm{Vol}_g\left(B_g(x,\frac14)\right)\geq v_0$ for all $x\in M$. Hence, 
$$Nv_0\leq  \sum_{i=1}^N\mathrm{Vol}_g\left(B_g(x_i,\frac 14)\right)\leq \mathrm{Vol}_g\left(B_g(x_0,20CR)\right)\leq C'R^n.$$
This shows the claim.

By using the above claim and $L^n_{loc}$ bound of $|\nabla^2\rho|$ from Proposition~\ref{Thm:exhaustion-C2}, 
\begin{equation}
    \begin{split}
        &\quad \frac1{R^p}\int_M |f|^p|\phi'|^p|\nabla^2\rho|^p\; d\mathrm{vol}_g \\
        &\leq \frac{C}{R^p}\left(\int_{A_g(x_0,8R,10CR +CC_1)} |\nabla^2\rho|^{n}\;d\mathrm{vol}_g\right)^{p/n} \left( \int_{\{8R\leq\rho\leq 10R\}}  |f|^\frac{pn}{n-p}\;d\mathrm{vol}_g\right)^\frac{n-p}{n}\\
        &\leq \frac{C}{R^p} \left(\sum_{i=1}^{CR^n}\int_{B_g(x_i,1)}|\nabla^2\rho|^n \right)^{p/n} \left( \int_{\{8R\leq\rho\leq 10R\}}  |f|^\frac{pn}{n-p}\;d\mathrm{vol}_g\right)^\frac{n-p}{n}\\
        &\leq C \left( \int_{\{8R\leq\rho\leq 10R\}}  |f|^\frac{pn}{n-p}\;d\mathrm{vol}_g\right)^\frac{n-p}{n}.
    \end{split}
\end{equation}

It remains to show that $f\in  L^\frac{np}{n-p}(M)$. 
We want to show that $\int_{M}|f|^\frac{np}{n-p}\;d\mathrm{vol}_g<\infty$.
To do this, we first choose a maximal 
$1$-net $\{x_i\}_{i=1}^{\infty}$ of $M$ in the sense of above. 

It is well-known that $L^2$-Sobolev inequality implies $L^p$-Sobolev inequality for all $n>p>2$ by applying $L^2$-Sobolev to $f^{q}$ for suitable $q$. Instead of using (i), we consider the standard Sobolev embedding from \cite[Theorem 14.3]{Li2012} together with the volume lower bound obtained above. In particular, for all $u\in C^\infty_{c}(B_g(x,2))$,
$$||u||_{L^\frac{np}{n-p}(B_g(x,2))}\leq 
C_{n,p,k, C_S}||\nabla u||_{L^p(B_g(x,2))}.$$

By expressing 
$|f|^p=\sum_{i=1}^{\infty} |f|^p\phi_i$ where $\{\phi_i\}_{i=1}^{N_1}$ is a fintie sequence of partition functions to be specified later. Then we have 
\begin{equation}
    \begin{split}
   ||f||_{L^\frac{np}{n-p}(M)}^p   &= 
    \left\|\sum_{i=1}^{\infty}  (|f|^p\phi_i)\right\|_{L^\frac{n}{n-p}(M)}\\
       &\leq \sum_{i=1}^{\infty} ||f \phi_i^{1/p} ||^p_{L^\frac{np}{n-p}(B_g(x_i,2))}\\
       &\leq C\sum_{i=1}^{N_1} ||\nabla (f\phi_i^{1/p}) ||^p_{L^p(B_g(x_i,2))}\\
       &\leq C ||\nabla f||^p_{L^p(M)}+C \sum_{i=1}^{\infty} \int_{B_g(x_i,2)} |f|^p |\nabla \phi_i^{1/p}|^p \;d\mathrm{vol}_g  \\
       &\leq C||\nabla f||^p_{L^p(M)}+ C \int_{M} |f|^p \left(\sum_{i=1}^{\infty}\frac{|\nabla\phi_i|^p}{\phi_i^{p-1}} \right) \;d\mathrm{vol}_g.
    \end{split}
\end{equation}
Here we have used the fact that $\sum_{i=1}^{\infty}\phi_i=1$ to convert the sum back to the whole gradient term. Now we specify the choice of $\phi_i$ so that $\sum_{i=1}^{\infty}\frac{|\nabla\phi_i|^p}{\phi_i^{p-1}}$ is bounded uniformly. 
For each $i$, we choose a smooth function $\varphi_i$ such that $\varphi_i\equiv 1$ on $B_g(x_i,1)$, vanishes outside $B_g(x_i,2)$ and satisfies  $|\nabla\varphi_i|\leq C_p\varphi_i^{1-\frac1p}$. By volume comparison and Ricci lower bound, any $x\in M$ can at most sit inside $C(n,k)$ of $B_g(x_i,2)$. 

To achieve this, we define $$\phi_i(x)=\frac{\varphi_i(x)}{\sum_{j=1}^{\infty} \varphi_j(x)}$$


so that $\sum_{i=1}^{\infty}
\phi_i\equiv 1$. 
By the choice of covering, for any $x\in M$, there is $i_0$ such that $x\in B_g(x_{i_0},1)$ and hence $\sum_{j=1}^{\infty} \varphi_j(x)\geq 1$. With this choice of $\phi_i$, direct computation show that $\sum_{i=1}^{\infty}\frac{|\nabla\phi_i|^p}{\phi_i^{p-1}}\leq C(n,k)$. 
We conclude that for $f\in W^{1,p}(M)$,
\be
||f||_{L^\frac{np}{n-p}}\leq C'\left( ||f||_{L^p}+||\nabla f||_{L^p}\right).
\ee
This completes the proof.

\end{proof}

Finally, we remark that if in addition we assume a slightly stronger bound on $\Ric$, it is clear from the proof that we might improve estimates of $\nabla^2\rho$ accordingly. Furthermore, if we assume uniform $L^p$ bound on $\Ric$ for sufficiently large $p$, then a consequence of Ricci flow smoothing will yield the existence of $C^2$ distance like function $\rho$, see \cite{Yang1992a}.

\section{pseudolocality of Ricci flow}

We will need a pseudo-locality of Ricci flow under \textit{bounded curvature} which is crucial in estimating the life-span of the Ricci flows. The following pseudo-locality theorem is a modified version from the work in \cite{ChanChenLee2021} which was proved by the first, third author and Chen.
\begin{thm}\label{pseudo}
For all $n\geq 3$ and $A,\lambda,\tau,v_0>0$, there are $C_0$, $\sigma$ and $\hat T>0$ depending only on $n,\lambda,\tau,A,v_0$ such that the following holds. Suppose $(N^n,g(t))$ is a complete Ricci flow of bounded curvature on $[0,T]$ and the initial metric $g(0)$ satisfies the followings:
\begin{enumerate}
\item[(a)] $\mathcal{R}(g(0))\geq -\lambda;$
\item[(b)] $\mathrm{Vol}_{g(0)}\left( B_{g(0)}(p,2r)\right)\leq L \cdot \mathrm{Vol}_{g(0)}\left( B_{g(0)}(p,r)\right)\leq Lv_0r^n$ for all $r\leq 1$; 
\item[(c)]  $\nu(B_{g(0)}(p,1),g(0),\tau)\geq -A$;
\item[(d)] $\displaystyle\left(\int_{B_{g(0)}(p, 1)}|\Rm(g_0)|^{n/2}d\mathrm{vol}_{g(0)}\right)^{2/n}\leq \e $ for some $\e<\sigma $,
\end{enumerate}
for all $p\in M$. Then we have for any $x$ $\in N$ and  $t$ $\in (0,  T\wedge \hat T]$,
\be\label{C0 bdd for Riem}
|\Rm|(x,t)\leq\frac{C_0\e}{t} \quad\text{and}\quad \mathrm{inj}_{g(t)}(x) \geq C_0^{-1}\sqrt{t}.
\ee
Moreover, we have $\displaystyle\left(\int_{B_t(x,1)}|\Rm(g(t))|^{n /2}d\mathrm{vol}_{g(t)}\right)^{2/n}\leq C_0 \e$ for all $x\in N$. In particular, the Ricci flow must exist up to $\hat T$.
\end{thm}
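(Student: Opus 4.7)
The plan is to follow the pseudolocality framework developed in \cite{ChanChenLee2021}, with the contradiction/blow-up engine driven by the scale-invariant smallness of $\int |\Rm|^{n/2}$ and non-collapsing from the local entropy. Since bounded curvature is assumed throughout, all evolution equations can be manipulated classically and the Hamilton--Cheeger--Gromov compactness applies.

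\textbf{Step 1 (propagation of the basic data).} For short time $t\leq \hat T$, Hamilton's maximum principle propagates $\mathcal{R}(g(t))\geq -\lambda/(1+2\lambda t/n)$. Perelman's $\nu$-monotonicity, applied to the localization of the entropy (cf.\ \cite{Wang2018}), propagates the lower bound $\nu(B_{g(t)}(x,r),g(t),\tau-t)\geq -A-\mathrm{o}(1)$ on a slightly smaller scale; combined with the scalar lower bound this yields a uniform $L^{2}$-Sobolev inequality on $g(t)$-unit balls, via Lemma~\ref{ent to S ineq}.

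\textbf{Step 2 (local $L^{n/2}$ control along the flow).} From the evolution inequality $(\partial_t -\Delta)|\Rm|^{n/2}\leq C|\Rm|^{n/2+1}$, one tests against a cutoff $\phi$ supported in a unit parabolic neighborhood. The Sobolev inequality from Step 1 combined with Hölder lets one absorb the worst term into the left-hand side provided the initial integral is small, yielding
\begin{equation*}
\left(\int_{B_t(x,1)}|\Rm(g(t))|^{n/2}\,d\vol_{g(t)}\right)^{2/n}\leq C_0\,\e,\qquad t\in(0,\hat T].
\end{equation*}
A short-time distance distortion argument (bootstrapped from the pointwise curvature bound below) lets us replace $g(0)$-balls by $g(t)$-balls in the above estimate.

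\textbf{Step 3 (pointwise curvature via contradiction).} Suppose the desired bound $|\Rm|(x,t)\leq C_0\e/t$ fails. Run a Perelman-style point-picking in a parabolic neighborhood of a bad point to produce $(x_i,t_i)$ with $Q_i:=|\Rm|(x_i,t_i)\to\infty$ relative to the expected rate, and $|\Rm|_{g_i}\leq 4Q_i$ on a parabolic neighborhood of scale proportional to $Q_i^{-1/2}$. Rescale $\tilde g_i(s):=Q_i\,g_i(t_i+s/Q_i)$; then on the rescaled flows $|\Rm|_{\tilde g_i}(x_i,0)=1$, bounded by $4$ nearby, $\mathcal{R}\geq -\lambda/Q_i\to 0$, the entropy lower bound persists at an ever-larger scale, and the $L^{n/2}$ integral of curvature on unit balls of $\tilde g_i$ is bounded by $C\e_i\to 0$ by Step 2 (scale invariance is crucial here).

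\textbf{Step 4 (blow-up and contradiction).} By Hamilton--Cheeger--Gromov compactness (with the injectivity radius lower bound from $\kappa$-non-collapsing via entropy) pass to a smooth pointed limit Ricci flow $(\tilde N_\infty,\tilde g_\infty(s),x_\infty)$ defined on a nontrivial time interval around $s=0$, with $|\Rm|_{\tilde g_\infty}(x_\infty,0)=1$ but $\int_{B(x,1)}|\Rm|^{n/2}=0$ everywhere. The latter forces $\Rm\equiv 0$ on the limit, contradicting $|\Rm|(x_\infty,0)=1$.

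\textbf{Step 5 (injectivity radius).} The propagated entropy lower bound provides $\kappa$-non-collapsing in the sense of Perelman at all scales $\leq\sqrt{\tau-t}$, so that Cheeger--Gromov--Taylor gives $\mathrm{inj}_{g(t)}(x)\geq C_0^{-1}\sqrt t$ once the curvature bound $|\Rm|\leq C_0\e/t$ is in hand.

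The principal obstacle is the circular dependence in Steps 2--3: the local $L^{n/2}$ bound on $g(t)$-balls requires short-time distance distortion, which requires a pointwise curvature bound, which in turn uses the $L^{n/2}$ bound in the blow-up analysis. The fix, as in \cite{ChanChenLee2021}, is to first prove both estimates on $g(0)$-balls using only the initial geometry (where no distortion issue arises), then run a continuity argument in $t$ to upgrade to $g(t)$-balls once a crude Ricci bound on the flow has been established from the pointwise curvature estimate.
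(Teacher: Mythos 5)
Your proposal reconstructs the internal machinery of the pseudolocality theorem from scratch, whereas the paper itself gives no self-contained argument for Theorem~\ref{pseudo}: it simply cites \cite[Theorem 1.2]{ChanChenLee2021} and then explains why the hypotheses can be weakened from a pointwise Ricci lower bound to the combination of (a) scalar lower bound, (b) volume doubling and volume-ratio upper bound, (c) local entropy, and (d) $L^{n/2}$-smallness. The content of the paper's proof is precisely this verification: the Ricci bound in the earlier reference was used only to obtain (a), (b), a uniform volume lower bound, and the time-slice distance comparison of \cite[Lemma 2.2]{LeeTam2022} (restated here as Lemma~\ref{DistanceDistortion-RF}), all of which are available directly from (a)--(d). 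Your sketch re-derives the underlying structure but never isolates or verifies this relaxation, which is the actual mathematical contribution of the modified statement; in particular the role of the volume ratio hypothesis (b) in replacing Ricci-comparison-based distance distortion goes unaddressed except implicitly in your final paragraph.

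Beyond that mismatch of emphasis, there are two places where your outline is thinner than it should be. First, the factor $\e$ in the conclusion $|\Rm|\le C_0\e/t$ is not automatic from a blow-up contradiction: to get the rescaled $L^{n/2}$ integrals to vanish in the limit you must run the contradiction over a sequence of thresholds $\sigma_i\to 0$ (hence $\e_i\to 0$), which is not what ``suppose the desired bound fails for the given $\e$'' delivers. You gesture at $\e_i\to 0$ in Step 3 without explaining where it comes from, and the point-picking quantity ``$Q_i\to\infty$ relative to the expected rate'' leaves the choice of sequence underspecified. Second, in Step 1 Perelman's $\nu$-monotonicity is global; the localization due to Wang \cite{Wang2018} used here is genuinely more delicate (it requires the curvature bound of Shi and control of cutoff functions under the flow), and the assertion ``the entropy lower bound persists at an ever-larger scale'' after rescaling needs the observation — which the paper makes explicitly — that the scale $\tau$ in $\nu(B,g,\tau)$ is harmless because one can rescale the metric. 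These are not fatal flaws, but filling them in would amount to reproducing the proof in \cite{ChanChenLee2021} rather than merely citing it, which is a longer route than the one the paper actually takes.
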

\begin{proof}
This is proved by an identical argument of \cite[Theorem 1.2]{ChanChenLee2021}. As pointed out in  \cite[Remark 1.3]{ChanChenLee2021}, the Ricci lower bound was only used for the purpose of scalar curvature lower bound, uniform volume doubling up to a fixed scale (which follows from volume comparison in the presence of Ricci lower bound) and volume ratio upper bound so that we can invoke \cite[Lemma2.2]{LeeTam2022} to compare geodesic balls at different time slice uniformly (after scaling), see Lemma~\ref{DistanceDistortion-RF} for the precise statement. The scale on the entropy is unimportant as we can scale up the metric. The curvature concentration on the ball of same radius is based on covering argument from uniform volume doubling. 
\end{proof}

In \cite{ChanChenLee2021}, the assumptions on the scalar curvature and the volume ratio are simple consequences of Ricci lower bound which is natural to be considered. This is however difficult to be preserved when constructing approximating Ricci flow solution. The flexibility on the newly stated pseudolocality 
is crucial in constructing Ricci flows with unbounded curvature.

\section{Existence of Ricci flow}
In this section, we will construct the Ricci flow by using the pseudolocality and property of the exhaustion function $\rho$. Since $g_0$ is not necessarily of bounded curvature, we use a trick of Topping \cite{Topping2010} (see also \cite{Hochard2016}) to construct local Ricci flows using  the function $\rho$ constructed in Section~\ref{Sec:exhaustion}.  We follow the treatment in \cite{LeeTam2020}.

Let $\kappa\in (0,1)$, $f:[0,1)\to[0,\infty)$ be the function:
\be\label{e-exh-1}
 f(s)=\left\{
  \begin{array}{ll}
    0, & \hbox{$s\in[0,1-\kappa]$;} \\
    -\displaystyle{\log \lf[1-\lf(\frac{ s-1+\kappa}{\kappa}\ri)^2\ri]}, & \hbox{$s\in (1-\kappa,1)$.}
  \end{array}
\right.
\ee
Let   $\varphi\ge0$ be a smooth function on $\R$ such that $\varphi(s)=0$ if $s\le 1-\kappa+\kappa^2 $, $\varphi(s)=1$ for $s\ge 1-\kappa+2 \kappa^2 $
\be\label{e-exh-2}
 \varphi(s)=\left\{
  \begin{array}{ll}
    0, & \hbox{$s\in[0,1-\kappa+\kappa^2]$;} \\
    1, & \hbox{$s\in (1-\kappa+2\kappa^2,1)$.}
  \end{array}
\right.
\ee
such that $\displaystyle{\frac2{ \kappa^2}}\ge\varphi'\ge0$. Define
 $$\mathfrak{F}(s):=\int_0^s\varphi(\tau)f'(\tau)d\tau.$$
 \bigskip

The following highlights the important properties of $\mathfrak{F}$.
\begin{lma}[Lemma 4.1 in \cite{LeeTam2020}] \label{l-exhaustion-1}Suppose   $0<\kappa<\frac18$. Then the function $\mathfrak{F}\ge0$ defined above is smooth and satisfies the following:
\begin{enumerate}
  \item [(i)] $\mathfrak{F}(s)=0$ for $0\le s\le 1-\kappa+\kappa^2$.
  \item [(ii)] $\mathfrak{F}'\ge0$ and for any $k\ge 1$, $\exp( -k\mathfrak{F})\mathfrak{F}^{(k)}$ is uniformly  bounded.
  \item [(iii)]  For any $ 1-2\kappa <s<1$, there is $\tau>0$ with $0<s -\tau<s +\tau<1$ such that
 \bee
 1\le \exp(\mathfrak{F}(s+\tau)-\mathfrak{F}(s-\tau))\le (1+c_2\kappa);\ \ \tau\exp(\mathfrak{F}(s-\tau))\ge c_3\kappa^2
 \eee
  for some absolute constants  $c_2>0, c_3>0$.
\end{enumerate}

\end{lma}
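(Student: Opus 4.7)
The plan is to handle items (i)--(iii) in order, exploiting the observation that on $\{\varphi\equiv 1\}=[1-\kappa+2\kappa^2,1)$ we have $\mathfrak{F}=f+C_0$ for a constant $C_0$, so everything reduces to explicit computation from the logarithmic singularity of $f$.

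Items (i) and the monotonicity in (ii) are immediate: for $s\in[0,1-\kappa+\kappa^2]$ the integrand $\varphi(\tau)f'(\tau)$ is identically zero (either $\varphi=0$, or on $[0,1-\kappa]$ also $f'=0$), so $\mathfrak{F}\equiv 0$ there; and $\mathfrak{F}'=\varphi f'\ge 0$ in general since $\varphi\ge 0$ and $f'(s)=\tfrac{2(s-1+\kappa)/\kappa^2}{1-((s-1+\kappa)/\kappa)^2}\ge 0$ on $(1-\kappa,1)$. Smoothness of $\mathfrak{F}$ follows because $f$ extends smoothly past $s=1-\kappa$ with $f'(1-\kappa)=0$. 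For the bound $e^{-k\mathfrak{F}}\mathfrak{F}^{(k)}=O(1)$ I would split $[0,1)$ into three pieces: on $[0,1-\kappa+\kappa^2]$ it is trivial; on the compact transition $[1-\kappa+\kappa^2,1-\kappa+2\kappa^2]$ everything is smooth and bounded away from the singularity; on the essential region $[1-\kappa+2\kappa^2,1)$, introducing $u=(s-1+\kappa)/\kappa$ and decomposing $f=-\log(1-u)-\log(1+u)$ one computes
\begin{equation*}
\mathfrak{F}^{(k)}(s)=\frac{(k-1)!}{\kappa^k}\bigl[(1-u)^{-k}+(-1)^k(1+u)^{-k}\bigr],\qquad e^{-k\mathfrak{F}(s)}=e^{-kC_0}(1-u)^k(1+u)^k,
\end{equation*}
so that the product is bounded by a constant depending only on $k$ and $\kappa$.

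For (iii) the strategy is to balance two competing requirements: $\tau$ small enough that $\mathfrak{F}$ barely grows across $[s-\tau,s+\tau]$, yet large enough that $\tau e^{\mathfrak{F}(s-\tau)}\ge c_3\kappa^2$. On the region $\varphi\equiv 1$, with $u=(s-1+\kappa)/\kappa$ and $\alpha=\tau/\kappa$, the explicit identity
\begin{equation*}
\mathfrak{F}(s+\tau)-\mathfrak{F}(s-\tau)=\log\frac{(1-u+\alpha)(1+u-\alpha)}{(1-u-\alpha)(1+u+\alpha)}
\end{equation*}
suggests the choice $\tau=\tfrac12\kappa(1-s)$, equivalently $\alpha=\tfrac{\kappa}{2}(1-u)$. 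Substituting, the factor $(1-u)$ cancels in the left quotient, leaving $\mathfrak{F}(s+\tau)-\mathfrak{F}(s-\tau)\le \log\tfrac{1+\kappa/2}{1-\kappa/2}\le c_2\kappa$; simultaneously $e^{\mathfrak{F}(s-\tau)}\sim e^{C_0}/(1-u)$ combines with $\tau=\tfrac12\kappa^2(1-u)$ to give $\tau e^{\mathfrak{F}(s-\tau)}\ge c_3\kappa^2$. Admissibility $s\pm\tau\in(0,1)$ is elementary from $s>1-2\kappa$ and $\kappa<1/8$. When $s$ falls in the transition zone or in $\{\mathfrak{F}\equiv 0\}$, the smaller choice $\tau\sim\kappa^2$ makes both inequalities automatic. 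The main technical obstacle is the bookkeeping of the two scales $\kappa$ and $1-s$; the combined scale $\tau\sim\kappa(1-s)$ is the one dictated by the logarithmic form of $f$.
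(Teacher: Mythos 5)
The paper itself gives no proof of this lemma; it simply cites it as Lemma~4.1 of Lee--Tam, so there is nothing to compare against, and I evaluate your argument on its own terms. Your proof is essentially correct: the explicit formula for $f^{(k)}$, the observation that $\mathfrak{F}=f+C_0$ on $\{\varphi\equiv 1\}$, and the cancellation of the $(1-u)$ factor under the choice $\tau=\tfrac12\kappa(1-s)$ all hold up, and the two target inequalities in (iii) follow as you describe.

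Two small points worth tightening. First, smoothness of $\mathfrak{F}$ should not be attributed to ``$f$ extends smoothly past $s=1-\kappa$'': it does not, since $f''$ jumps there ($f''(1-\kappa^+)=2/\kappa^2\neq 0$). What actually makes $\mathfrak{F}$ smooth is that $\mathfrak{F}'=\varphi f'$, with $\varphi$ vanishing to infinite order at $s=1-\kappa+\kappa^2>1-\kappa$, a point where $f'$ is already real-analytic; the glued function $\mathfrak{F}'$ is then $C^\infty$. Second, in (iii) the case split should be stated more carefully: your explicit quotient identity requires \emph{both} $s-\tau$ and $s+\tau$ to lie in $\{\varphi\equiv 1\}$, i.e.\ $s-\tau\ge 1-\kappa+2\kappa^2$. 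When that fails (which happens precisely when $1-s$ is of size comparable to $\kappa$), you fall back on $\tau\sim\kappa^2$; there the key point to record is that $\mathfrak{F}\le f\le -\log\bigl(1-O(\kappa^2)\bigr)=O(\kappa^2)$ on the whole interval in question, so $\mathfrak{F}(s+\tau)-\mathfrak{F}(s-\tau)=O(\kappa^2)\le c_2\kappa$ and $\tau e^{\mathfrak{F}(s-\tau)}\ge\tau\ge c_3\kappa^2$ trivially. With those clarifications the argument is complete.
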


Now we are ready to prove the main result.
\begin{proof}[Proof of Theorem~\ref{main-Existence}]
We follow and modify the argument in \cite{He2016}. Fix $p\in M$ and $\rho$ be the smooth proper function obtained from Theorem~\ref{Thm:exhaustion-C2}. For any  $R>0$ sufficiently large, let  $U_{R}$ be the component of $ \{x|\ \rho(x)<R\}$ which contains $p$.  In this way, $U_{R}$ will exhaust $M$ as $R\to\infty$.  On each $U_R$, define 
$$F_R(x)=\mathfrak{F}\left(\frac{\rho(x)}{R} \right),\quad\text{and}\quad g_{R,0}=e^{2F_R}g_0$$
which is a complete metric on $U_R$ with bounded curvature by \cite[Lemma 4.3]{LeeTam2020}, see also \cite{Hochard2016}. It will be sufficient to fix a small $\kappa$.

By the celebrated work of Shi \cite{Shi1989}, we may deform each $g_{R,0}$ using Ricci flow for a short time. Let $g_R(t), t\in [0,T_R)$ be the Shi's Ricci flow on $U_R$ with $g_R(0)=g_{R,0}$ and $T_R$ is the maximal existence time.  Our goal is to apply Theorem~\ref{pseudo} on each $(U_R,g_R(t)),t\in [0,T_R)$ to show that $T_R> \hat T$ with uniform scaling invariant estimates on $(0,\hat T]$.  In other word, it suffices to show that $g_{R,0}$ satisfies the assumptions in Theorem~\ref{pseudo} after an uniform re-scaling. If this is the case, then $T_R>\hat T$ which is uniform in $R\to +\infty$. Since $g_{R,0}=g_0$ on any compact subset $\Omega\Subset M$ as $R\to +\infty$. By \cite[Corollary 3.2]{Chen2009} (see also \cite{Simon2008}) and the modified Shi's higher order estimate \cite[Theorem 14.16]{ChowBookII}, we infer that for any $k\in \mathbb{N}$ and $\Omega\Subset M$, we can find $C(n,k,\Omega,g_0)>0$ so that for all $R\to +\infty$,
\begin{align}
\sup_{\Omega\times [0,\hat T]}|\nabla^k \mathrm{Rm}(g_R(t))|\leq C(n,k,\Omega,g_0).
\end{align}
By working on coordinate charts and  Ascoli-Arzel\`a Theorem, we may pass to a subsequence to obtain a smooth solution $g(t)=\lim_{R\rightarrow +\infty}g_R(t)$ of the Ricci flow on $M\times [0,\hat T]$ with $g(0)=g_0$ so that the estimates in conclusion holds. Moreover, it is a complete solution by \cite[Corollary 3.3]{SimonTopping2016}. The scalar curvature lower bound is a simple consequence of maximum principle, either by applying it on the approximating Ricci flow $g_R(t)$ or using Chen's maximum principle \cite{Chen2009} on $g(t)$.

The remaining part of the proof will be devoted to show that $g_{R,0}$ satisfies the assumptions in Theorem~\ref{pseudo} uniformly for $R$ sufficiently large. In what follows, we will use $\Lambda$ to denote any constants which might depend on the (ineffective) Ricci lower bound. We will show that the error in this form can be ignored eventually by letting $R\to +\infty$.

\begin{claim}[Uniform scalar curvature lower bound]\label{claim1}
There is $R_0>0$ such that for all $R>R_0$, we have on $U_R$, 
$$\mathcal{R}(g_{R,0})\geq -2\lambda.$$
\end{claim}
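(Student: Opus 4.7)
The plan is to apply the conformal change formula for the scalar curvature to $g_{R,0}=e^{2F_R}g_0$,
\begin{equation*}
\mathcal{R}(g_{R,0}) = e^{-2F_R}\bigl(\mathcal{R}(g_0) - 2(n-1)\Delta_{g_0}F_R - (n-1)(n-2)|\nabla_{g_0}F_R|^2_{g_0}\bigr),
\end{equation*}
and to argue that every term involving derivatives of $F_R$ becomes arbitrarily small as $R\to\infty$. Since $F_R\geq 0$ we have $e^{-2F_R}\leq 1$, so assumption (i) of Theorem~\ref{main-Existence} immediately gives $e^{-2F_R}\mathcal{R}(g_0)\geq -\lambda$ pointwise on $U_R$. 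It thus suffices to show that the remaining two terms can be made smaller than $\lambda$ in absolute value by taking $R$ large.

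Differentiating $F_R(x)=\mathfrak{F}(\rho(x)/R)$ gives
\begin{align*}
|\nabla_{g_0}F_R|^2_{g_0} &= R^{-2}\bigl(\mathfrak{F}'(\rho/R)\bigr)^2|\nabla_{g_0}\rho|^2,\\
\Delta_{g_0}F_R &= R^{-1}\mathfrak{F}'(\rho/R)\Delta_{g_0}\rho + R^{-2}\mathfrak{F}''(\rho/R)|\nabla_{g_0}\rho|^2.
\end{align*}
By property (i) of Lemma~\ref{l-exhaustion-1}, these derivatives vanish identically on the set where $\rho(x)/R\leq 1-\kappa+\kappa^2$; using the upper bound $\rho(x)\leq d_{g_0}(x,x_0)$ from Theorem~\ref{Thm:exhaustion-C2}, this set contains $B_{g_0}(x_0,4)$ once $R$ is sufficiently large, so the pointwise bound $|\nabla\rho|^2+|\Delta\rho|\leq C$ from Theorem~\ref{Thm:exhaustion-C2} is valid wherever the derivatives of $F_R$ are nonzero. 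Property (ii) of Lemma~\ref{l-exhaustion-1} supplies $e^{-\mathfrak{F}}\mathfrak{F}'$ and $e^{-2\mathfrak{F}}\mathfrak{F}''$ uniformly bounded, so multiplying by $e^{-2F_R}$ and absorbing the bounded factor $|\nabla\rho|^2+|\Delta\rho|\leq C$ yields
\begin{equation*}
e^{-2F_R}|\Delta_{g_0}F_R|\leq \frac{C'}{R}, \qquad e^{-2F_R}|\nabla_{g_0}F_R|^2_{g_0}\leq \frac{C'}{R^2},
\end{equation*}
where $C'=C'(n,\kappa,g_0)$ is independent of $R$.

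Combining the three estimates, for $R\geq R_0(n,\lambda,\kappa,g_0)$ large enough the derivative terms contribute at most $\lambda$ in absolute value, so $\mathcal{R}(g_{R,0})\geq -2\lambda$ on $U_R$. There is no real obstacle here: this is essentially bookkeeping once the pointwise $C^0$-gradient and $C^0$-Laplacian control of $\rho$ supplied by Theorem~\ref{Thm:exhaustion-C2} is combined with the exponential decay for derivatives of $\mathfrak{F}$ in Lemma~\ref{l-exhaustion-1}(ii). The one thing worth highlighting is that the apparently dangerous region near $\partial U_R$, where $F_R\to\infty$, is harmless precisely because the correction terms come weighted by $e^{-2F_R}$ via the conformal transformation law, and Lemma~\ref{l-exhaustion-1}(ii) is designed exactly to absorb this growth.
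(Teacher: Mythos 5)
Your proposal is correct and follows essentially the same approach as the paper: the conformal change formula for scalar curvature, the pointwise bounds $|\nabla\rho|^2+|\Delta\rho|\leq C$ from Theorem~\ref{Thm:exhaustion-C2}, and Lemma~\ref{l-exhaustion-1}(ii) to control $e^{-kF_R}\mathfrak{F}^{(k)}$, yielding an $O(1/R)$ error that is absorbed for $R$ large. Your additional remark that the derivatives of $F_R$ are supported away from $B_{g_0}(x_0,4)$ (so the estimates of Theorem~\ref{Thm:exhaustion-C2} apply where needed) is a correct clarification that the paper leaves implicit.
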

\begin{proof}[Proof of Claim]
By the conformal change formula of scalar curvature, we have for $R$ sufficiently large,
 \begin{equation}
 \begin{split}
 &\quad  \mathcal{R}(g_{R,0})-e^{-2\mathfrak{F}}\mathcal{R}(g_{0})\\
  &=e^{-2\mathfrak{F}}\left(-\frac{4(n-1)}{n-2}\left(\frac{(n-2)^2}{4R^2}|\mathfrak{F}'|^2|\nabla\rho|^2+\frac{n-2}{2R^2}\mathfrak{F}''|\nabla\rho|^2+\frac{n-2}{2R}\mathfrak{F}'\Delta\rho\right)\right)\\
  &\geq -\frac{C_n}{R} \Lambda \\
  &\geq -\lambda
  \end{split}
\end{equation}
where $\nabla$ denotes the connection with respect to $g_0$. Here we have used \ref{Thm:exhaustion-C2} and Lemma \ref{l-exhaustion-1} to control $\mathfrak{F}$, $|\Delta \rho|$ and $|\nabla \rho|$ point-wisely. Since $F_R\geq 0$, 
$$\mathcal{R}(g_{R,0})\geq e^{-2\mathfrak{F}}\mathcal{R}(g_{0})-\lambda \geq -2\lambda.$$
This completes the proof.
\end{proof}

\begin{claim}[Uniform volume doubling]\label{claim2}
There exists $R_0>0$ such that for all $R>R_0$, for all $x\in U_R$ and $r\leq \frac12$, we have
$$\mathrm{Vol}_{g_{R,0}}\left( B_{g_{R,0}}(x,2r)\right)\leq 2L^2 \cdot \mathrm{Vol}_{g_{R,0}}\left( B_{g_{R,0}}(x,r)\right)\leq 2L^2(1+c_2\kappa)^n v_0 r^n.$$
\end{claim}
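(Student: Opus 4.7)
\textbf{Plan for Claim~\ref{claim2}.} The plan is to split according to the value $s_0 := \rho(x)/R \in [0,1)$. Throughout I will fix $\kappa$ small enough that $(1+c_2\kappa)^n \leq 2$ and $2(1+c_2\kappa) < 4$, then choose $R_0$ large depending on $\kappa$ and the constant $C$ from Theorem~\ref{Thm:exhaustion-C2}; the key input is the bound $|\nabla\rho|_{g_0} \leq C$, which converts $g_0$-proximity into proximity in the variable $\rho/R$. \emph{Interior case} $s_0 \leq 1-\kappa+\kappa^2/2$: once $R \geq 4C/\kappa^2$, the gradient bound forces $\rho/R \leq 1-\kappa+\kappa^2$ throughout $B_{g_0}(x,2)$, so $\mathfrak{F}(\rho/R) \equiv 0$ there by Lemma~\ref{l-exhaustion-1}(i) and $g_{R,0}$ agrees with $g_0$ on $B_{g_0}(x,2)$. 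Since $g_{R,0} \geq g_0$ globally and $g_0$-minimizing geodesics out of $x$ of $g_0$-length $<r$ stay in $B_{g_0}(x,r)$, it follows that $B_{g_{R,0}}(x,r) = B_{g_0}(x,r)$ for $r \leq 1/2$ (and similarly at $2r$), and the $g_{R,0}$-volumes agree with the $g_0$-volumes, so the claim reduces directly to hypothesis (ii) of Theorem~\ref{main-Existence}.

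\emph{Boundary case} $s_0 > 1-\kappa+\kappa^2/2$: since $s_0 > 1-2\kappa$, Lemma~\ref{l-exhaustion-1}(iii) supplies $\tau = \tau(s_0,\kappa)>0$ and quantities $a := e^{\mathfrak{F}(s_0-\tau)}$, $b := e^{\mathfrak{F}(s_0+\tau)}$ with $1 \leq b/a \leq 1+c_2\kappa$ and $\tau a \geq c_3\kappa^2$. On $\Omega := B_{g_0}(x, \tau R/C)$ the gradient bound gives $|\rho/R - s_0| < \tau$, hence $a^2 g_0 \leq g_{R,0} \leq b^2 g_0$ on $\Omega$. The crucial sub-step is to confine $B_{g_{R,0}}(x,2r)$ inside $\Omega$: any $g_{R,0}$-rectifiable path from $x$ exiting $\Omega$ has $g_{R,0}$-length at least $a\cdot\tau R/C \geq c_3\kappa^2 R/C$, by applying $g_{R,0} \geq a^2 g_0$ to the segment up to the first exit time; this exceeds $2r \leq 1$ once $R_0 \geq 2C/(c_3\kappa^2)$. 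Once confined, one gets the bi-Lipschitz inclusions $B_{g_0}(x,r/b) \subset B_{g_{R,0}}(x,r) \subset B_{g_0}(x,r/a)$ (and similarly at scale $2r$), and combining $d\vol_{g_{R,0}} \in [a^n,b^n]\,d\vol_{g_0}$ on $\Omega$ with the $g_0$-doubling applied twice (using $2r/a \leq 2(1+c_2\kappa)\,r/b < 4r/b$ and the fact that the intermediate radii $r/b, 2r/b$ are $\leq 1$) yields
\begin{equation*}
\mathrm{Vol}_{g_{R,0}}\bigl(B_{g_{R,0}}(x,2r)\bigr) \leq (b/a)^n L^2 \, \mathrm{Vol}_{g_{R,0}}\bigl(B_{g_{R,0}}(x,r)\bigr) \leq 2L^2 \, \mathrm{Vol}_{g_{R,0}}\bigl(B_{g_{R,0}}(x,r)\bigr),
\end{equation*}
together with $\mathrm{Vol}_{g_{R,0}}(B_{g_{R,0}}(x,r)) \leq b^n v_0 (r/a)^n \leq (1+c_2\kappa)^n v_0 r^n$ via hypothesis (ii) applied at radius $r/a \leq r \leq 1$.

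The main obstacle is the confinement step in the boundary case: the $g_0$-scale $\tau$ furnished by Lemma~\ref{l-exhaustion-1}(iii) can be arbitrarily small as $s_0 \to 1^-$, so a direct comparison of $g_0$- and $g_{R,0}$-balls at any fixed $g_0$-radius breaks down. The content of the bound $\tau a \geq c_3\kappa^2$ is precisely that after rescaling by the conformal factor $a$ the natural length scale on which $g_{R,0}$ and $g_0$ are comparable becomes uniform in $s_0$, and this uniform scale is what lets one absorb the target radius $2r \leq 1$ once $R$ is large. This is the only point at which the hypothesis $R > R_0$ is used.
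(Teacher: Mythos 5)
Your proof is correct and follows essentially the same strategy as the paper: split into an interior case (where $g_{R,0}\equiv g_0$ on a fixed $g_0$-ball around $x$, forcing $g_{R,0}$- and $g_0$-balls to agree) and a boundary case handled via Lemma~\ref{l-exhaustion-1}(iii) together with a confinement argument and the resulting two-sided metric comparison. The only differences are cosmetic: you confine $B_{g_{R,0}}(x,2r)$ inside a $g_0$-ball $\Omega$ rather than inside the level-set slab $\{|\rho/R - s_0| < \tau\}$ used in the paper (which cites Lee--Tam for the confinement), and you spell out the exit-length argument; note, though, that the inclusion $B_{g_0}(x,r/b)\subset\Omega$ needed for the first bi-Lipschitz inclusion does not literally ``come for free'' from confinement of the $g_{R,0}$-ball, but it does hold under the same threshold $R_0\gtrsim C/(c_3\kappa^2)$ because $\tau R/C \ge c_3\kappa^2 R/(Cb) \ge r/b$ once $R\ge Cr/(c_3\kappa^2)$, so this is a gap in exposition rather than in substance.
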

 \begin{proof}[Proof of Claim]
 The argument is similar to that in \cite{He2016, LeeTam2017, Huang2019}. We include a sketch here. Let $x\in U_R$ and $r\leq \frac12$. If $x\in U_R$ is such that $\rho(x)\leq (1-2\kappa)R$, then if $z\in B_{g_0}(x,1)$,
 \begin{equation}
     \begin{split}
         \rho(z)&\leq \rho(x)+ \Lambda d_{g_0}(x,z)\\
         &\leq (1-2\kappa) R + \Lambda \\
         &\leq (1-\kappa+\kappa^2)R
     \end{split}
 \end{equation}
 provided that $R$ is sufficiently large. Hence Lemma~\ref{l-exhaustion-1} implies that $g_{R,0}=g_0$ on $B_{g_0}(x,1)$. In particular, the volume doubling and volume ratio upper bound on ball centred at $x$ trivially holds. 
 
 If instead $R>\rho(x)\geq (1-2\kappa) R$, argue as in \cite{LeeTam2017}, (iii) in Lemma~\ref{l-exhaustion-1} imply that as long as $R$ is sufficiently large, we will have 
 $B_{g_{R,0}}(x,1)\subset \{z\in U_R:s-\tau <R^{-1}\rho(z)<s+\tau \}$ where $s=\frac{\rho(x)}{R}$ and $\tau(s)$ is the uniform constant from Lemma~\ref{l-exhaustion-1} depending only on $s$. Therefore we have 
 \begin{equation}\label{metric-com}
      e^{2\mathfrak{F}(s-\tau)}g_0\leq g_{R,0}\leq e^{2\mathfrak{F}(s+\tau)}g_0
 \end{equation}
 on $B_{R,0}(x,1)$. And hence, for $r\leq \frac12$ (so that $2r\leq 1$),
\begin{equation}   \begin{split} 
\mathrm{Vol}_{g_{R,0}}B_{g_{R,0}}(x,r)\geq&\mathrm{Vol}_{g_{R,0}}B_{g_{0}}(x, e^{-\mathfrak{F}(s+\tau)}r)\\
\geq&e^{n\mathfrak{F}(s-\tau)}L^{-2}\mathrm{Vol}_{g_{0}}B_{g_{0}}(x, e^{-\mathfrak{F}(s+\tau)}4r)\\
\geq&e^{n[\mathfrak{F}(s-\tau)-\mathfrak{F}(s+\tau)]}L^{-2}\mathrm{Vol}_{g_{R,0}}B_{g_{0}}(x, e^{-\mathfrak{F}(s+\tau)}4r)\\
    \geq& \frac1{(1+c_2\kappa)^n}L^{-2}\mathrm{Vol}_{g_{R,0}}B_{g_{R,0}}(x, e^{\mathfrak{F}(s-\tau)-\mathfrak{F}(s+\tau)}4r)\\
    \geq& \frac12 L^{-2}\mathrm{Vol}_{g_{R,0}}B_{g_{R,0}}(x, 2r)\\
\end{split}
\end{equation}
 provided that we choose $\kappa$ sufficiently small (independent of $R\to +\infty$). The volume ratio upper bound is similar. This completes the proof.
 \end{proof}

 \begin{claim}[Uniform entropy lower bound]\label{claim3}
 There exists $\tilde A,R_0>0$ depending only on $n,A,\tau$  such that for all $R>R_0$, for all $x\in U_R$, 
 $$ \nu\left(B_{g_{R,0}}(x,1/2),g_{R,0},\tau \right)\geq -
 \tilde A-2\tau \lambda.$$
 \end{claim}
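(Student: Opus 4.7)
The plan is to reduce the claim to the hypothesis $\bar\nu(B_{g_0}(x,1),g_0,\tau)\ge -A$, after absorbing the scalar curvature contribution and the bounded conformal perturbation produced by the cut-off $F_R$.

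First, since $W=\bar W+\tau\int \mathcal{R}\,u^2\,d\mathrm{vol}_g$ and $\int u^2\,d\mathrm{vol}_{g_{R,0}}=1$ on $D_{g_{R,0}}$, the scalar bound $\mathcal{R}(g_{R,0})\ge-2\lambda$ from Claim \ref{claim1} gives $W\ge \bar W-2\tau\lambda$, hence $\nu\ge\bar\nu-2\tau\lambda$. So it suffices to prove $\bar\nu(B_{g_{R,0}}(x,1/2),g_{R,0},\tau)\ge-\tilde A$ for some $\tilde A(n,A,\tau)$. Split into two cases depending on $s:=\rho(x)/R$. If $s\le 1-2\kappa$, then as in Claim \ref{claim2}, $F_R\equiv 0$ on $B_{g_0}(x,1)$ for $R$ large, so $g_{R,0}=g_0$ there and $B_{g_{R,0}}(x,1/2)=B_{g_0}(x,1/2)\subseteq B_{g_0}(x,1)$; the result then follows immediately from monotonicity of $\bar\nu$ in the domain together with hypothesis (iii).

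Otherwise $s\in[1-2\kappa,1)$. Apply Lemma \ref{l-exhaustion-1}(iii) to extract $\tau_0>0$ with $s\pm\tau_0\in(0,1)$, $e^{\mathfrak{F}(s+\tau_0)-\mathfrak{F}(s-\tau_0)}\le 1+c_2\kappa$, and $B_{g_{R,0}}(x,1)\subseteq\{z:R^{-1}\rho(z)\in(s-\tau_0,s+\tau_0)\}$. Set $\mu:=e^{\mathfrak{F}(s-\tau_0)}\ge 1$ and $\hat g:=\mu^2 g_0$. Then on $B_{g_{R,0}}(x,1)$ we have the sandwich
\begin{equation*}
\hat g\le g_{R,0}\le (1+c_2\kappa)^2\,\hat g .
\end{equation*}
By the scale invariance $\bar\nu(\Omega,c^2 g,c^2\tau)=\bar\nu(\Omega,g,\tau)$ and the fact that $\bar\nu$ is non-increasing in $\tau$ (it is an infimum over $(0,\tau]$), one has $\bar\nu(B_{g_0}(x,1),\hat g,\tau)=\bar\nu(B_{g_0}(x,1),g_0,\mu^{-2}\tau)\ge\bar\nu(B_{g_0}(x,1),g_0,\tau)\ge -A$ since $\mu\ge 1$. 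Because $g_{R,0}\ge\hat g\ge g_0$, we also have $B_{g_{R,0}}(x,1/2)\subseteq B_{g_0}(x,1/2)\subseteq B_{g_0}(x,1)$, so domain monotonicity yields $\bar\nu(B_{g_{R,0}}(x,1/2),\hat g,\tau)\ge -A$.

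The remaining and main step is to show that an almost-constant conformal perturbation costs at most an additive $O(\kappa)$ in $\bar\nu$. Given $u\in D_{g_{R,0}}(B_{g_{R,0}}(x,1/2))$ and $s\in(0,\tau]$, let $\alpha:=\bigl(\int u^2\,d\mathrm{vol}_{\hat g}\bigr)^{-1/2}\in[1,(1+c_2\kappa)^{n/2}]$ and $\tilde u:=\alpha u\in D_{\hat g}(B_{g_{R,0}}(x,1/2))$. A term-by-term comparison of $\bar W(\cdot,g_{R,0},u,s)$ with $\bar W(\cdot,\hat g,\tilde u,s)$, using the two-sided bound on metrics for the gradient and volume densities, and $-2\log\alpha=O(\kappa)$ for the normalization shift, gives
\begin{equation*}
\bar W(B_{g_{R,0}}(x,1/2),g_{R,0},u,s)\;\ge\;(1-C_n\kappa)\,\bar W(B_{g_{R,0}}(x,1/2),\hat g,\tilde u,s)-C_n\kappa .
\end{equation*}
The potentially large term $-2\int u^2\log u\,d\mathrm{vol}$ is handled by absorbing it into the gradient term via the uniform Sobolev inequality provided by Lemma \ref{ent to S ineq}, or equivalently by noting that near-infimum test functions automatically have controlled gradient energy. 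Taking infimum then gives $\bar\nu(B_{g_{R,0}}(x,1/2),g_{R,0},\tau)\ge (1-C_n\kappa)(-A)-C_n\kappa\ge -A-C_n$ for $\kappa$ small (depending only on $n$ and $A$). Combining with Step 1 yields $\nu\ge-\tilde A-2\tau\lambda$ with $\tilde A=\tilde A(n,A,\tau)$.

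The main obstacle is the quantitative stability of $\bar\nu$ under an $L^\infty$-small multiplicative change of metric (the last paragraph). The subtlety is that the entropy integrand is sensitive to both volume density and gradient energy, and the log term is not a priori bounded; this is where the Sobolev inequality from Lemma \ref{ent to S ineq} plays a second, indirect role in closing the estimate.
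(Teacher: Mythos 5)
Your reduction $\nu \geq \bar\nu - 2\tau\lambda$ via Claim~\ref{claim1}, the case split according to whether $\rho(x) \leq (1-2\kappa)R$, and the identification of the key issue (stability of $\bar\nu$ under a quasi-isometric change of metric) all agree with the paper's line of attack. The gap is in the last paragraph. The displayed inequality
$$\bar W(\Omega,g_{R,0},u,s)\;\ge\;(1-C_n\kappa)\,\bar W(\Omega,\hat g,\tilde u,s)-C_n\kappa$$
is asserted to follow from a ``term-by-term comparison,'' but the entropy term $-2\int u^2\log u\,d\mathrm{vol}$ does \emph{not} admit such a comparison: $u^2\log u$ changes sign and has no a priori size control, so replacing $d\mathrm{vol}_{g_{R,0}}$ by $d\mathrm{vol}_{\hat g}$ can shift this integral by $\pm C\kappa\int u^2|\log u|$, which is unbounded over the admissible class. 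You acknowledge this, but the remedy you sketch --- ``absorbing it into the gradient term via the uniform Sobolev inequality, or equivalently noting that near-infimum test functions have controlled gradient energy'' --- is not a proof; controlling gradient energy does not directly cap $\int u^2|\log u|$ without running the full Sobolev-implies-log-Sobolev argument, and ``near-infimum test functions'' begs the question because one first needs to know $\bar\nu(\Omega,g_{R,0},\tau)>-\infty$.

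The paper closes this gap by never comparing $\bar W$ integrands directly. Instead it converts the $\bar\nu$ bound for the reference metric into the $L^2$-Sobolev inequality of Lemma~\ref{ent to S ineq} (working with $h=e^{2\mathfrak{F}(s+\tau)}g_0$, the larger end of the sandwich, so that $\Omega\subset B_h(x,3/4)$), transports the Sobolev inequality across the $(1+c_2\kappa)$-quasi-isometry --- a trivial step, since a Sobolev constant changes only by a bounded multiplicative factor --- and then converts back to a $\bar\nu$ lower bound for $g_{R,0}$ on $\Omega$ via the Sobolev-to-log-Sobolev argument of \cite[Theorem 4.2.1]{ZhangBook}. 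This is precisely the mechanism that ``absorbs the log term,'' made rigorous: the log term is handled once, inside the Zhang argument, not in a term-by-term comparison of two $\bar W$ functionals. To repair your write-up, replace the direct $\bar W$ comparison by this Sobolev detour; the rest of your proof (the scaling identity $\bar\nu(\Omega,c^2g,c^2\tau)=\bar\nu(\Omega,g,\tau)$, monotonicity in $\tau$ and in the domain, and the ball inclusions) is correct and matches the paper.
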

\begin{proof}[Proof of Claim]
The proof is relatively simpler as we are only looking for a rough estimate. By Claim~\ref{claim1} and \cite[Lemma 3.1]{Wang2018}, it suffices to estimate the lower bound of
$$ \bar \nu\left(B_{g_{R,0}}(x,1/2),g_{R,0},\tau\right)$$
 decreasing $\tau_0$. As in the proof of Claim~\ref{claim2}, if $x\in U_R$ such that $\rho(x)\leq (1-2\kappa)R$, then the conclusion holds trivially as $g_{R,0}=g_0$ on $B_{g_{R,0}}(x,1)$.  Hence, it suffices to consider $R>\rho(x)>(1-2\kappa)R$ where \eqref{metric-com} holds on $B_{g_{R,0}}(x,1)$. 

For notation convenience, we denote $g_{R,0}$ by $g$, $e^{2\mathfrak{F}(s+\tau)}g_0$ by $h$ and $\Omega=B_g(x,1/2)$ so that on $\Omega$, we have $(1+\e)^{-2} h\leq g\leq h$ where $\e$ can be made as small as we wish by decreasing $\kappa$ using Lemma~\ref{l-exhaustion-1}. Here $s=\frac{\rho(x)}{R}$. 

Since $\mathfrak{F}\geq 0$, by the monotonicity of $\bar\nu$ (see \cite{Wang2018})
\begin{equation}
    \begin{split}
        \bar\nu(B_h(x,3/4),h,\tau_0)
        &=\bar\nu\left(B_{g_0}(x,\frac34 e^{-\mathfrak{F}(s+\tau)}),g_0,e^{-2\mathfrak{F}(s+\tau)}\tau \right)\\
        &\geq \bar\nu\left(B_{g_0}(x,\frac34) ,g_0,\tau \right)\\
        &\geq -A.  
    \end{split}
\end{equation}

Since $\Omega\subset B_h(x,\frac34)$, using the lower bound of $\bar \nu$, Lemma~\ref{ent to S ineq} and metric equivalence of $g$ and $h$, there exists $C_0(n,A, \tau)>0$ such that for all $f\in C^\infty_{c}(\Omega)$, 
\begin{equation}
\left(\int_\Omega |f|^\frac{2n}{n-2}\; d\mathrm{vol}_g \right)^\frac{n-2}{n}\leq C_0 \left( \int_\Omega |\nabla f|^2+f^2\; d\mathrm{vol}_g\right).
\end{equation}

Now we may apply the argument in \cite[Theorem 4.2.1]{ZhangBook} to show that the $L^2$ Sobolev constant dominates that of log-Sobolev constant for any fixed scale. Although it is stated for Sobolev inequality on closed manifold without boundary, the proof can be carried over directly.  In particular, there is $\tilde A>0$ depending only on $n,A$ such that $\bar\nu(\Omega,g,\tau)\geq -\tilde A$. This completes the proof.
\end{proof}

 \begin{claim}[Uniform curvature concentration]
 There exists $R_0>0$ such that for all $R>R_0$, for all $x\in U_R$, 
 $$\left(\int_{B_{g_{R,0}}(x,1)}|\Rm(g_{R,0})|^{n/2}\;d\mathrm{vol}_{g_{R,0}} \right)^{2/n}\leq 4\delta. $$
 \end{claim}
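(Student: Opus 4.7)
The plan is to split into two cases based on how close $\rho(x)$ is to $R$, and in the harder case to use the conformal change formula for the Riemann tensor together with Theorem~\ref{Thm:exhaustion-C2} and Lemma~\ref{l-exhaustion-1} to show that the conformal correction terms vanish as $R\to\infty$.

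Since $F_R\ge 0$, we have $g_{R,0}\ge g_0$ and hence $B_{g_{R,0}}(x,1)\subseteq B_{g_0}(x,1)$. First I consider the \emph{bulk} case $\rho(x)\le(1-2\kappa)R$. For any $z\in B_{g_0}(x,1)$ the gradient bound $|\na\rho|_{g_0}\le C$ from Theorem~\ref{Thm:exhaustion-C2} gives $\rho(z)/R\le 1-\kappa+\kappa^2$ once $R$ is large, so Lemma~\ref{l-exhaustion-1}(i) gives $\mathfrak{F}(\rho/R)\equiv 0$ on $B_{g_0}(x,1)$. Consequently $g_{R,0}=g_0$ on this ball and assumption (iv) of Theorem~\ref{main-Existence} directly yields the bound by $\e\le 4\e$.

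In the remaining \emph{boundary} case $(1-2\kappa)R<\rho(x)<R$, I set $s:=\rho(x)/R$ and take $\tau=\tau(s)$ from Lemma~\ref{l-exhaustion-1}(iii). Arguing as in Claim~\ref{claim2}, once $R$ is large $B_{g_{R,0}}(x,1)\subseteq\{z\in U_R:s-\tau<\rho(z)/R<s+\tau\}$; writing $m_1:=\mathfrak{F}(s-\tau)$ and $M_1:=\mathfrak{F}(s+\tau)$, one has $e^{M_1-m_1}\le 1+c_2\kappa$ and $g_{R,0}\ge e^{2m_1}g_0$, so $B_{g_{R,0}}(x,1)\subseteq B_{g_0}(x,e^{-m_1})\subseteq B_{g_0}(x,1)$. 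Applying the conformal change formula for the Riemann tensor under $g_{R,0}=e^{2F_R}g_0$ gives the pointwise bound
\begin{equation*}
|\Rm(g_{R,0})|_{g_{R,0}}^{n/2}\,d\mathrm{vol}_{g_{R,0}}\le C_n\bigl(|\Rm(g_0)|_{g_0}^{n/2}+|\na^2 F_R|_{g_0}^{n/2}+|\na F_R|_{g_0}^n\bigr)\,d\mathrm{vol}_{g_0},
\end{equation*}
and combining Lemma~\ref{l-exhaustion-1}(ii) (namely $|\mathfrak{F}^{(k)}|\le C_k e^{k\mathfrak{F}}$) with the chain rule and $|\na\rho|_{g_0}\le C$ yields $|\na F_R|_{g_0}\le Ce^{F_R}/R$ and $|\na^2 F_R|_{g_0}\le Ce^{2F_R}/R^2+Ce^{F_R}|\na^2\rho|_{g_0}/R$.

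The key cancellation runs as follows: on $B_{g_0}(x,e^{-m_1})$ we have $e^{F_R}\le(1+c_2\kappa)e^{m_1}$, and by (ii) of Theorem~\ref{main-Existence} its $g_0$-volume is at most $Lv_0 e^{-nm_1}$, so the $e^{nm_1}$ factor from $|\na F_R|^n$ and (after H\"older) the $e^{nm_1/2}$ factor from the leading term of $|\na^2 F_R|^{n/2}$ are both absorbed by the small $g_0$-volume, while the $L^n_{loc}$ bound $\int_{B_{g_0}(x,1)}|\na^2\rho|^n\le C$ from Theorem~\ref{Thm:exhaustion-C2} handles the remaining Hessian piece. One thereby obtains $\int|\na F_R|^n\,d\mathrm{vol}_{g_0}\le CR^{-n}$ and $\int|\na^2 F_R|^{n/2}\,d\mathrm{vol}_{g_0}\le CR^{-n/2}$ uniformly in $x$. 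Raising the displayed inequality to the $2/n$-th power and using subadditivity of $t\mapsto t^{2/n}$ (valid since $n\ge 4$), the $\Rm(g_0)$ term contributes at most $2\e$ by assumption (iv) while the $F_R$-error terms are $O(R^{-1})$, so choosing $R_0$ large enough produces the desired bound $\le 4\e$. The main technical obstacle is exactly this last point: $\mathfrak{F}$, and hence the conformal factor, blows up near $\partial U_R$, so the naive bounds on the derivatives of $F_R$ are not uniformly integrable; the essential observation, already used in Claim~\ref{claim2}, is that Lemma~\ref{l-exhaustion-1}(iii) confines the unit $g_{R,0}$-ball to a thin annulus in $\rho/R$-coordinates on which $\mathfrak{F}$ varies by only $O(\kappa)$, so the exponential blow-ups in the derivatives of $F_R$ cancel against the correspondingly small $g_0$-volume, leaving only the $1/R$ factor from the chain rule and the $L^n$-control of $\na^2\rho$ to drive the errors to zero.
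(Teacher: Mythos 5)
Your argument is essentially the paper's: split into the bulk case $\rho(x)\le(1-2\kappa)R$ (where $g_{R,0}=g_0$ on $B_{g_0}(x,1)$ once $R$ is large) and the boundary case, use the conformal-change formula for $\Rm$, confine the unit $g_{R,0}$-ball to the thin annulus $\{s-\tau<\rho/R<s+\tau\}$ via Lemma~\ref{l-exhaustion-1}(iii) so that the exponential growth of $e^{F_R}$ cancels against the small $g_0$-volume $\lesssim e^{-nm_1}$ coming from assumption (ii), and observe that the surviving error is $O(R^{-1})$. Your pointwise bound $|\Rm(g_{R,0})|^{n/2}d\mathrm{vol}_{g_{R,0}}\le C_n\bigl(|\Rm(g_0)|^{n/2}+|\nabla^2F_R|^{n/2}+|\nabla F_R|^n\bigr)d\mathrm{vol}_{g_0}$ is the paper's estimate with the volume Jacobian $e^{nF_R}$ pulled through, and your H\"older treatment of the $e^{F_R}|\nabla^2\rho|/R$ piece is the paper's term $\mathbf{III}$.

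The one genuine gap is the dimension $n=4$. You invoke the $L^n_{\loc}$ Hessian bound $\int_{B_{g_0}(x,1)}|\nabla^2\rho|^n\le C$ from Theorem~\ref{Thm:exhaustion-C2}, but that theorem (and the Moser iteration of Proposition~\ref{prop:exhaustion-C3} behind it) is only proved for $n\ge5$, so for $n=4$ this step has no justification. The fix, which the paper carries out explicitly at the end of the claim's proof, is to use the $L^2$ Morrey bound of Lemma~\ref{ap-rho-L2}: since $n/2=2$, no H\"older is needed and
\[
\frac{C}{R^{2}}\int_{B_{g_0}(x,e^{-m_1})}e^{2F_R}|\nabla^2\rho|^2\,d\mathrm{vol}_{g_0}\le\frac{C}{R^{2}}e^{2M_1}\cdot C_1 e^{-(n-2)m_1}\le\frac{C(1+c_2\kappa)^2}{R^{2}},
\]
which is again $O(R^{-2})$. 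Add this branch and the argument covers all $n\ge4$.
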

 \begin{proof}[Proof of claim.]
 By considering the sectional curvature under conformal change: 
 \begin{equation*}
    K^{g_{R,0}}_{ij}=e^{-2F_R}(K^{g_{0}}_{ij}-\sum\limits_{k\neq i,j}|\nabla_k F_R|^2+\nabla_i\nabla_i F_R+\nabla_j\nabla_j F_R)
\end{equation*}
where $\{e_i\}$ denotes an orthonormal frame with repect to $g_0$, we see that 
\begin{equation}
\begin{split}
    |\Rm(g_{R,0})|_{g_{R,0}}&\leq e^{-2F_R}|\Rm(g_{0})|_{g_{0}}+C_ne^{-2F_R}|\nabla F_R|_{g_{0}}^2+C_ne^{-2F_R}|\nabla^2 F_R|_{g_{0}}\\
    &\leq e^{-2F_R}|\Rm(g_{0})|_{g_{0}}+\frac{C_n}{R^2}e^{-2F_R}\left(|\mathfrak{F'}|^2+|\mathfrak{F''}|\right)|\nabla \rho|^2+\frac{C_n}{R}e^{-2F_R}|\mathfrak{F}'||\nabla^2 \rho|_{g_{0}}\\
    &\leq e^{-2F_R}|\Rm(g_{0})|_{g_{0}}+\frac{\Lambda}{R^2}+\frac{C_n}{R} e^{-F_R} |\nabla^2\rho|
\end{split}
\end{equation}
where we have used Lemma~\ref{l-exhaustion-1}.

Hence, 
\begin{equation}
    \begin{split}
        \int_{B_{g_{R,0}}(x,1)}|\Rm(g_{R,0})|^{n/2}\;d\mathrm{vol}_{g_{R,0}}  
        &\leq C_n \int_{B_{g_{R,0}}(x,1)} |\Rm(g_0)|_{g_0}^{n/2}\;d\mathrm{vol}_{g_{0}} \\
        &\quad +\frac{\Lambda}{R^n} \mathrm{Vol}_{g_{R,0}}\left(B_{g_{R,0}}(x,1) \right)\\
        &\quad +\frac{C_n}{R^{n/2}}\int_{B_{g_{R,0}}(x,1)} e^{nF_R/2} |\nabla^2\rho|^{n/2}_{g_0} \;d\mathrm{vol}_{g_{0}} \\
        &=\mathbf{I}+\mathbf{II}+\mathbf{III}.
    \end{split}
\end{equation}
 
 Since $B_{g_{R,0}}(x,1)\subset B_{g_0}(x,1)$, it is clear that $\mathbf{I}\leq 
 \, C_n \delta^{n/2}$ by our assumption. For $\mathbf{II}$, as in the proof of Claim~\ref{claim2}, we may assume $x\in U_R$ such that $R>\rho(x)\geq (1-2\kappa)R$ and hence for $R\geq R_0$, we have \eqref{metric-com} on $B_{g_{R,0}}(x,1)$. In particular, for $s=R^{-1}\rho(x)$,
 \begin{equation}
     \begin{split}
         \mathbf{II}&\leq
         \frac{\Lambda}{R^n}\cdot e^{n\mathfrak{F(s+\tau)}}\mathrm{Vol}_{g_{0}}\left(B_{g_{R,0}}(x,1) \right)\\
         &\leq  \frac{\Lambda}{R^n}\cdot e^{n\mathfrak{F(s+\tau)}}\mathrm{Vol}_{g_{0}}\left(B_{g_{0}}(x,e^{-\mathfrak{F(s-\tau)}}) \right)\\
         &\leq \frac{\Lambda v_0}{R^n}\cdot e^{n(\mathfrak{F(s+\tau)}-\mathfrak{F(s-\tau)})}\\
         &\leq \frac{\Lambda v_0 (1+c_2\kappa)^n}{R^n}\leq \delta
     \end{split}
 \end{equation}

 if we further increase $R_0$. Here we have used volume comparison to control the volume of unit ball. 
 
 It remains to consider $\mathbf{III}$. Using again \eqref{metric-com}, volume comparison,  Theorem~\ref{Thm:exhaustion-C2}  and Lemma~\ref{l-exhaustion-1},
 \begin{equation}
     \begin{split}
       \mathbf{III}&\leq   \frac{C_n}{R^{n/2}} e^{n\mathfrak{F}(s+\tau)/2} \int_{B_{g_{0}}(x,e^{-\mathfrak{F}(s-\tau)})} |\nabla^2\rho|^{n/2}_{g_0} \;d\mathrm{vol}_{g_{0}}\\
       &\leq \frac{C_n}{R^{n/2}}  e^{n\mathfrak{F}(s+\tau)/2}\left(\int_{B_{g_{0}}(x,e^{-\mathfrak{F}(s-\tau)})} |\nabla^2\rho|^{n}_{g_0} \;d\mathrm{vol}_{g_{0}}\right)^{1/2} \cdot \sqrt{\mathrm{vol}_{g_0}\left( B_{g_0}(x,e^{-\mathfrak{F}(s-\tau)}\right)}\\
       &\leq \frac{\Lambda}{R^{n/2}} e^{\frac12 n(\mathfrak{F(s+\tau)}-\mathfrak{F(s-\tau)})}\leq \delta
     \end{split}
 \end{equation}
 if $R$ is sufficiently large and $n\geq 5$. If $n= 4$, alternatively we apply Lemma~\ref{ap-rho-L2} and volume comparison so that 
 \begin{equation}
     \begin{split}
         \mathbf{III}&\leq  \frac{C}{R^2} e^{2\mathfrak{F}(s+\tau)} \int_{B_{g_{0}}(x,e^{-\mathfrak{F}(s-\tau)})} |\nabla^2\rho|^{2}_{g_0} \;d\mathrm{vol}_{g_{0}}\\
         &\leq \frac{\Lambda}{R^2} e^{2\mathfrak{F}(s+\tau)-2\mathfrak{F}(s-\tau)}\leq \delta.
     \end{split}
 \end{equation}
 
 This completes the proof by combining $\mathbf{I},\mathbf{II}$ and $\mathbf{III}$.
 \end{proof}

 By choosing an even smaller $\delta$, we may apply Theorem~\ref{pseudo} by considering a re-scaled flow $\sigma^{-1} g_{R}(\sigma t)$ for an uniform small $\sigma>0$. Result follows by re-rescaling it back as described above. 
 \end{proof}
 
\begin{rem}
It is clear from the proof that the existence of Ricci flow can be localized. We stick with the global existence so that the geometric implication is clearer. 
\end{rem}

The small curvature concentration might look restrictive but in fact this can be achieved by scaling if $\Rm(g_0)\in L^{n/2}$. 
\begin{lma}\label{breakingscalinginvariant-Ln/2}
Suppose $(M,g)$ is a complete manifold such that $\Rm(g)\in L^{n/2}$, then for any $\e>0$ there is $1>r_0>0$ such that for all $x\in M$,
$$\int_{B_g(x,r_0)}|\Rm(g)|^{n/2}\; d\mathrm{vol}_g <\e.$$
\end{lma}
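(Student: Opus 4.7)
The plan is to exploit that $d\mu := |\Rm(g)|^{n/2}\,d\mathrm{vol}_g$ is a finite Borel measure on $M$, so $\mu(A)\to 0$ whenever $\mathrm{vol}_g(A)\to 0$; the only non-trivial point is to upgrade the pointwise statement $\mu(B_g(x,r))\to 0$ as $r\to 0$ into a \emph{uniform} statement in $x\in M$. Non-compactness is the potential danger, and it is neutralized by exhausting most of the mass by a compact set.

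First, given $\e>0$, I would choose a compact set $K\subset M$ with
\[
\int_{M\setminus K}|\Rm(g)|^{n/2}\,d\mathrm{vol}_g<\e/2,
\]
using $|\Rm(g)|^{n/2}\in L^1(M)$. Next, consider $K_1:=\{y\in M:d_g(y,K)\leq 1\}$, which is closed and bounded, and hence compact by the Hopf--Rinow theorem (since $(M,g)$ is complete).

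For every $x\in K_1$, monotone convergence yields $\mu(B_g(x,r))\to 0$ as $r\to 0$, so I can pick $r_x\in (0,1/2)$ such that $\mu(B_g(x,2r_x))<\e/2$. By compactness of $K_1$, finitely many balls $B_g(x_i,r_{x_i})$, $i=1,\dots,N$, cover $K_1$. Let
\[
r_0:=\min_{1\leq i\leq N}r_{x_i}\in (0,1).
\]
I now split into two cases for an arbitrary $x\in M$. If $x\in K_1$, there is some $i$ with $x\in B_g(x_i,r_{x_i})$ and by the triangle inequality $B_g(x,r_0)\subset B_g(x_i,2r_{x_i})$, so $\mu(B_g(x,r_0))<\e/2<\e$. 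If $x\notin K_1$, then $d_g(x,K)>1\geq r_0$, which forces $B_g(x,r_0)\cap K=\emptyset$; therefore $B_g(x,r_0)\subset M\setminus K$, and consequently $\mu(B_g(x,r_0))\leq\mu(M\setminus K)<\e/2<\e$.

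There is no real obstacle here; the one thing to watch is that the finite covering step must be applied on a compact set large enough to contain every ball $B_g(x,r_0)$ that meets $K$, which is exactly why I enlarge $K$ to $K_1$ by a buffer of size $1\geq r_0$ and invoke completeness to ensure $K_1$ is compact.
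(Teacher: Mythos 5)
Your proof is correct and follows the same underlying strategy as the paper: cut off the integrable tail on a large compact region, then use compactness in the middle to upgrade the pointwise shrinking of $\mu(B_g(x,r))$ to a uniform statement. The difference is purely in implementation. The paper chooses a ball $B_g(p,R)$ capturing all but $\e$ of the mass, notes that for $x\notin B_g(p,R+1)$ the ball $B_g(x,r_0)$ lies in the tail, and handles the precompact set $\overline{B_g(p,R+1)}$ by a contradiction argument: a sequence $x_i$ with $r_i\to 0$ and mass $\geq\e_0$ would, after passing to a convergent subsequence, contradict dominated convergence. You instead make the compactness step direct and constructive: pick $r_x$ at each $x$ with $\mu(B_g(x,2r_x))<\e/2$, extract a finite subcover of the buffered compact set $K_1$, and take $r_0$ to be the minimum of the finitely many radii, using the doubled radii $2r_{x_i}$ so the triangle inequality absorbs the ball $B_g(x,r_0)$. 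Both are routine applications of compactness (sequential vs. open-cover), and both hinge on the same two facts — finiteness of $\mu$ and Hopf--Rinow to guarantee (pre)compactness of bounded sets; your version avoids proof by contradiction and spells out why the uniform radius exists, which is arguably a cleaner write-up of the same idea.
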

\begin{proof}
Fix $p\in M$.
Since $|\Rm(g)|\in L^{n/2}$, completeness implies that there is $R>0$ such that 
$$\int_{M\setminus B_g(p,R)}|\Rm(g)|^{n/2}\; d\mathrm{vol}_g <\e.$$

Therefore, it suffices to consider $x\in B_g(p,R+1)$. We claim that the conclusion holds on $\overline{B_g(p,R+1)}$. Otherwise, we may find $\e_0>0$, $x_i\in B_g(p,R+1)$ and $r_i\to 0$ such that 
$$\int_{B_g(x_i,r_i)}|\Rm(g)|^{n/2}\; d\mathrm{vol}_g \geq \e_0$$
which is impossible by dominated convergence Theorem since $B_g(p,R+1)$ is pre-compact. This completes the proof by combining both.
\end{proof}

As a corollary, we have the following existence result of Ricci flow. 
\begin{thm}\label{Ln/2-existence}
Suppose $(M^n,g_0),n\geq 4$ is a complete non-compact manifold such that 
\begin{enumerate}
\item[(i)] $\liminf_{z\to +\infty}\Ric(g_0)>-\infty$;
\item[(ii)] $\inf_{x\in M}\bar\nu(B_{g_0}(x,1),g_0,\tau)\geq -A$ for some $\tau,A>0$;
\item[(iii)] $\Rm(g_0)\in L^{n/2}(M,g_0)$.
\end{enumerate}
For any $\e>0$, then there is a Ricci flow $g(t)$ from $g_0$ on $M\times [0,S_\e]$ with $S_\e>0$ such that $\mathrm{inj}(g(t))\geq C\sqrt{t}$ for some $C>0$ and
\begin{equation}\label{cur-esti-RF}
  \sup_M |\Rm(g(t))|<\frac{\e}{t}. 
\end{equation}
\end{thm}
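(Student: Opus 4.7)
The plan is to deduce Theorem~\ref{Ln/2-existence} from Theorem~\ref{main-Existence} by a parabolic rescaling. From hypothesis (i), together with smoothness of $\Ric(g_0)$ on compact subsets, one extracts a global Ricci lower bound $\Ric(g_0)\ge -k\,g_0$ on $M$ for some $k>0$, and hence $\mathcal{R}(g_0)\ge -nk$. Bishop--Gromov volume comparison under $\Ric\ge -k$ then furnishes constants $L=L(n,k)$ and $v_0=v_0(n,k)$ such that on any complete manifold with $\Ric\ge -k$, every ball of radius at most $1$ doubles with constant $L$ and has volume bounded by $v_0 r^n$. Set $\lambda:=nk$ and let $C_0,\sigma,T>0$ be the constants produced by Theorem~\ref{main-Existence} applied with parameters $(n,A,L,\tau,\lambda,v_0)$, where $A,\tau$ come from hypothesis (ii). Fix $\e'\in(0,1]$ with $C_0\e'<\e$ and $\e'<\sigma$, and use Lemma~\ref{breakingscalinginvariant-Ln/2} together with (iii) to select $r_0\in(0,1]$ small enough that
\[
\int_{B_{g_0}(x,r_0)}|\Rm(g_0)|^{n/2}\,d\mathrm{vol}_{g_0}\le (\e')^{n/2}\qquad\text{for every } x\in M.
\]

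Next I verify that the rescaled metric $\tilde g_0:=r_0^{-2}g_0$ satisfies assumptions (i)--(v) of Theorem~\ref{main-Existence} with the fixed parameters above. Since $r_0\le 1$, $\mathcal{R}(\tilde g_0)=r_0^{2}\mathcal{R}(g_0)\ge -r_0^{2}nk\ge -\lambda$. Because $B_{\tilde g_0}(x,r)=B_{g_0}(x,r r_0)$ and $d\mathrm{vol}_{\tilde g_0}=r_0^{-n}d\mathrm{vol}_{g_0}$, the volume doubling and upper bound at scales $r\le 1$ in $\tilde g_0$ correspond exactly to the $g_0$-conditions at scales $\le r_0\le 1$, which were built into the choice of $L,v_0$. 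By parabolic invariance of $\bar\nu$ and its monotonicity under shrinking both the domain and the scale parameter,
\[
\bar\nu(B_{\tilde g_0}(x,1),\tilde g_0,\tau)=\bar\nu(B_{g_0}(x,r_0),g_0,r_0^{2}\tau)\ge \bar\nu(B_{g_0}(x,1),g_0,\tau)\ge -A.
\]
The curvature concentration is scale invariant, so
\[
\left(\int_{B_{\tilde g_0}(x,1)}|\Rm(\tilde g_0)|^{n/2}\,d\mathrm{vol}_{\tilde g_0}\right)^{2/n}=\left(\int_{B_{g_0}(x,r_0)}|\Rm(g_0)|^{n/2}\,d\mathrm{vol}_{g_0}\right)^{2/n}\le \e'<\sigma,
\]
and $\Ric(\tilde g_0)=\Ric(g_0)$ inherits the lim-inf condition at infinity as a tensor inequality.

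Applying Theorem~\ref{main-Existence} to $\tilde g_0$ produces a complete Ricci flow $\tilde g(t)$ on $M\times[0,T]$ with $|\Rm(\tilde g(t))|\le C_0\e'/t$ and $\mathrm{inj}_{\tilde g(t)}\ge C_0^{-1}\sqrt{t}$. Now define $g(t):=r_0^{2}\tilde g(r_0^{-2}t)$ on $M\times[0,r_0^{2}T]$; by parabolic invariance of the Ricci flow equation, this is a complete solution with $g(0)=r_0^{2}\tilde g_0=g_0$. Under this rescaling one obtains
\[
|\Rm(g(t))|_{g(t)}=r_0^{-2}\big|\Rm\bigl(\tilde g(r_0^{-2}t)\bigr)\big|_{\tilde g}\le \frac{C_0\e'}{t}<\frac{\e}{t},\qquad \mathrm{inj}_{g(t)}=r_0\,\mathrm{inj}_{\tilde g(r_0^{-2}t)}\ge C_0^{-1}\sqrt{t}.
\]
Taking $S_\e:=r_0^{2}T$ gives the claimed flow.

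The main obstacle is arranging the input parameters of Theorem~\ref{main-Existence}---in particular $L$, $v_0$ and $\lambda$---to be fixed \emph{before} choosing $r_0$, so that the resulting constants $C_0,\sigma,T$ are independent of the rescaling. This is handled by Bishop--Gromov (for $L,v_0$ at small scales), by the global Ricci lower bound extracted from (i) (for $\lambda$), and by monotonicity of $\bar\nu$ under shrinking the domain and the scale (for $A$). Only after these constants are locked in does one invoke Lemma~\ref{breakingscalinginvariant-Ln/2} to choose $r_0$ small enough that the curvature concentration at scale $r_0$ drops below the threshold $\min(\sigma,\e/C_0)$, closing the argument.
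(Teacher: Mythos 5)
Your proposal is correct and follows essentially the same route as the paper: fix the parameters of Theorem~\ref{main-Existence} from the Ricci lower bound (via Bishop--Gromov and the scaling/monotonicity of $\bar\nu$), invoke Lemma~\ref{breakingscalinginvariant-Ln/2} to pick a small radius where the curvature concentration drops below threshold, rescale, apply Theorem~\ref{main-Existence}, and rescale back. The only cosmetic difference is that the paper further shrinks the rescaling parameter to normalize $\Ric(\tilde g_0)\geq -1$ and use dimensional constants, whereas you keep the $k$-dependent constants directly; both are valid.
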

\begin{proof}
By Lemma~\ref{breakingscalinginvariant-Ln/2}, for any $\sigma>0$ there is $r_0>0$ such that for all $x\in M$, 
$$\left(\int_{B_{g_0}(x,r_0)}|\Rm(g_0)|^{n/2}\;d\mathrm{vol}_{g_0}\right)^{2/n}<\sigma.$$

Let $0<r_1<\min\{r_0,1\}$ 
be a constant to be determined. Consider the re-scaled metric $\tilde g_0=r_1^{-2}g_0$. We choose $r_1$ sufficiently small so that 
\begin{enumerate}
    \item[(a)] $\Ric(\tilde g_0)\geq -1$;
    \item[(b)] $\mathrm{Vol}_{\tilde g_0}\left( B_{\tilde g_0}(x,2r)\right)\leq 3^n\cdot \mathrm{Vol}_{\tilde g_0}\left( B_{\tilde g_0}(x,r)\right)\leq v_0 r^n$ for all $r\leq 1$;
    \item[(c)] $\inf_{x\in M}\bar \nu\left(B_{\tilde g_0}(x,1),\tilde g_0,1 \right)\geq -A$;
    \item[(d)] For all $x\in M$, 
    $$\left(\int_{B_{\tilde g_0}(x,1)}|\Rm(\tilde g_0)|^{n/2}\;d\mathrm{vol}_{\tilde g_0}\right)^{2/n}<\sigma.$$
\end{enumerate}
Here (b) follows from volume comparison, (c) follows from monotonicity of local entropy 
and (d) follows from $r_1< r_0$. By Theorem~\ref{main-Existence}, if we choose $\sigma$ sufficiently small, then there is a Ricci flow $\tilde g(t)$ starting from $\tilde g_0$ on $[0,T]$ with 
\begin{equation}
    \left\{
    \begin{array}{ll}
     |\Rm(\tilde g(t))|\leq C_0\sigma t^{-1};\\
    \mathrm{inj}_{\tilde g(t)}\geq C_0^{-1}\sqrt{t}.
    \end{array}
    \right.
\end{equation}

By re-scaling it back 
$g(t):=r_1^2 \tilde g(r_1^{-2}t)$ on $M\times [0,Tr_1^2]$, we obtain $g(t)$ with 
\begin{equation}\label{rough-estim-RF}
    \left\{
    \begin{array}{ll}
     |\Rm( g(t))|\leq C_0\sigma t^{-1};\\
    \mathrm{inj}_{ g(t)}\geq C_0^{-1}\sqrt{t}.
    \end{array}
    \right.
\end{equation}
Result follows since $\sigma$ is arbitrary.
\end{proof}
It is unclear to us whether the flow for different small $\e>0$ coincides with each other due to the lack of uniqueness under scaling invariant smoothing estimates. 

\begin{rem}
Theorem~\ref{Ln/2-existence} is not entirely a generalization of Theorem~\ref{main-Existence} since the existence time will be heavily depending on the asymptotic information of $g_0$ at infinity. In contrast, Theorem~\ref{main-Existence} is a quantitative existence result which enables us to apply to more generic settings.
\end{rem}

\section{Applications using Ricci flow existence}\label{application-from-RF}

In this section, we will apply the Ricci flow smoothing to study non-collapsed manifolds with small curvature concentration. Since we no longer consider the Ricci lower bound, we first point out that a version of distance distortion still hold in our situation. 

\begin{lma}\label{DistanceDistortion-RF}
Suppose $(M,g(t))$ is a complete Ricci flow on $M\times [0,T]$ such that $g(0)=g_0$ and for some $L>0$ and $\frac1{4(n-1)}>\a>0$,
\begin{enumerate}
    \item[(a)] $\mathcal{R}(g(t))\geq -L$;
    \item[(b)] $|\Rm(g(t))|\leq \a t^{-1}$;
    \item[(c)] $\mathrm{inj}(g(t))\geq L^{-1}\sqrt{t}$;
    \item[(d)] $\mathrm{Vol}_{g_0}\left(B_{g_0}(x,r)\right)\leq Lr^n$ for $0<r\leq 1$;
\end{enumerate}
then there exists $C_n,\tilde r(n,L),\sigma(n,L),R_1(n,L)>0$ such that for $t\in [0,T]$, we have 
\begin{enumerate}
    \item[(i)] $d_{g_0}(x,y)\leq d_{g(t)}(x,y)+C_n\sqrt{t}$;
    \item[(ii)] $d_{g_0}(x,y)\geq \sigma d_{g(t)}(x,y)$ if $\sigma^{-1}\sqrt{t}\leq d_{g_0}(x,y)\leq \tilde r$;
    \item[(iii)] $d_{g_0}(x,y)\geq \sigma d_{g(t)}(x,y)$ if $d_{g_0}(x,y)\geq R_1$ and $0\leq t\leq 1$;
    \item[(iv)]  for $x,y\in M$ such that $d_{g_0}(x,y)\leq 1$ and $0\leq t\leq  1$,
    $$\sigma  d_{g(t)}(x,y)^\frac{1}{1-2(n-1)\a}\leq d_{g_0}(x,y)\leq \sigma^{-1} d_{g(t)}(x,y)^\frac{1}{1+2(n-1)\a}.$$
\end{enumerate}
\end{lma}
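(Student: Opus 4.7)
The plan is to establish (i) first via a Perelman-type distance-distortion estimate, then derive a time-to-time bi-Lipschitz comparison $d_{g(s)}\asymp(t/s)^{(n-1)\alpha}d_{g(t)}$ for $0<s\le t$ from the scaling-invariant curvature bound (b), and finally combine these to prove (iv), with (ii) and (iii) following as easy corollaries.

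For (i), the hypothesis (b) yields $|\Ric(g(t))|\le(n-1)\alpha/t$ globally on $M$, and (c) provides the injectivity-radius scale $r_0=\tfrac{1}{2}L^{-1}\sqrt{t}$. Applying Perelman's form of Hamilton's distance-distortion estimate at time $t$ gives $\tfrac{d^+}{dt}d_{g(t)}\ge -C_n/\sqrt{t}$, and integrating from $0$ to $t$ produces (i). The bi-Lipschitz comparison between positive times comes directly from $\p_t g=-2\Ric$ combined with (b): for any tangent vector $v$, $\left|\p_t\log|v|_{g(t)}\right|\le (n-1)\alpha/t$. Integrating over $t\in[s,t]$ and along any $C^1$ curve yields $(s/t)^{(n-1)\alpha}L_{g(s)}(\gamma)\le L_{g(t)}(\gamma)\le (t/s)^{(n-1)\alpha}L_{g(s)}(\gamma)$, and specialising to minimisers in either metric gives $d_{g(s)}\le(t/s)^{(n-1)\alpha}d_{g(t)}$ together with the symmetric bound.

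To obtain (iv), I combine these with (i): for any $s\in(0,t]$,
\bee
d_{g_0}(x,y)\le d_{g(s)}(x,y)+C_n\sqrt{s}\le (t/s)^{(n-1)\alpha}d_{g(t)}(x,y)+C_n\sqrt{s}.
\eee
Minimising the right-hand side in $s$ by taking $s^*\sim (t^{(n-1)\alpha}d_{g(t)})^{2/(1+2(n-1)\alpha)}$ (the balance point of the two terms) gives $d_{g_0}\le C\,t^{(n-1)\alpha/(1+2(n-1)\alpha)}d_{g(t)}^{1/(1+2(n-1)\alpha)}$, which under $t\le 1$ is the second inequality of (iv). The admissibility $s^*\le t$ translates to $d_{g(t)}\lesssim\sqrt{t}$; in the complementary regime the choice $s=t$ in (i) gives a linear bound $d_{g_0}\le 2d_{g(t)}$ that, under $d_{g_0}\le 1$, subsumes the power-law bound. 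The first inequality of (iv) is the dual statement: one bounds $L_{g(t)}(\gamma_0)$ along a $g_0$-minimising geodesic by the bi-Lipschitz estimate together with a near-initial control $L_{g(s)}(\gamma_0)\le d_{g_0}+C\sqrt{s}$, and optimises in $s$ analogously to get $d_{g(t)}\le C d_{g_0}^{1-2(n-1)\alpha}$. The constraint $\alpha<1/(4(n-1))$ keeps both exponents $1\pm 2(n-1)\alpha$ positive and bounded away from the degenerate value $1$. Parts (ii) and (iii) are then immediate: if $d_{g_0}\ge\sigma^{-1}\sqrt{t}$ or $d_{g_0}\ge R_1$, then (i) gives $d_{g(t)}\ge d_{g_0}/2$, and the upper bound from (iv) yields $d_{g(t)}\le C\,d_{g_0}$ after $\tilde r$ and $R_1$ are chosen small, respectively large, depending on $n$ and $L$.

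The main obstacle will be the near-initial comparison $L_{g(s)}(\gamma_0)\le d_{g_0}+C\sqrt{s}$ needed for the lower bound in (iv): the raw differential inequality only compares lengths at two strictly positive times and degenerates as $s\to 0^+$. This will be handled by a separate argument that uses the scalar lower bound (a) and the volume-ratio bound (d) to anchor the length evolution at $t=0$, invoking the comparison of geodesic balls across time slices established in \cite[Lemma 2.2]{LeeTam2022}, exactly as in the proof of Theorem~\ref{pseudo}.
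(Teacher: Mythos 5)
Your plan gets (i) and the time-to-time bi-Lipschitz comparison \eqref{distance-ts} right, and the optimisation-in-$s$ step for the upper bound in (iv) matches the paper's argument (which follows \cite[Lemma 2.4]{HuangKongRongXu2020}). However, the logical architecture is inverted in a way that creates a genuine gap. You claim (ii) and (iii) are ``easy corollaries'' of (iv); they are not. The lower bound in (iv) gives $d_{g(t)}\le \sigma^{-(1-2(n-1)\a)}d_{g_0}^{1-2(n-1)\a}$, and since the exponent $1-2(n-1)\a<1$, this is a \emph{strictly weaker} bound than the linear $d_{g(t)}\le \sigma^{-1}d_{g_0}$ of (ii) whenever $d_{g_0}$ is small (e.g.\ at scale $d_{g_0}\sim\sqrt{t}$). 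You cannot recover a linear comparison from a sublinear power law. In the paper the dependence runs in the opposite direction: (ii) and (iii) are proved \emph{first}, directly from \cite[Lemma 2.2]{LeeTam2022}, and (ii) is then the essential input for the lower bound in (iv).

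The second problem is the form of the ``near-initial comparison'' you isolate as the main obstacle. You propose to prove an additive estimate $L_{g(s)}(\gamma_0)\le d_{g_0}+C\sqrt{s}$. This is not what \cite[Lemma 2.2]{LeeTam2022} provides, and it is not what is needed. That lemma (after the parabolic rescaling $\tilde g(t)=\lambda^2 g(\lambda^{-2}t)$ with $\lambda=\mu^{-1}R_0\,d_{g_0}(x,y)^{-1}$, which is how the paper handles the ``anchoring at $t=0$'') gives a \emph{multiplicative} bound $d_{g(s)}(x,y)\le C\,d_{g_0}(x,y)$ valid only under a scale constraint $s\lesssim d_{g_0}(x,y)^2$ and $d_{g_0}(x,y)\le\tilde r$ --- which is precisely statement (ii). Once you have (ii), the lower bound in (iv) is mechanical: take $\sqrt{s}=\sigma d_{g_0}(x,y)$ (the largest admissible scale), apply (ii) at time $s$ to get $d_{g(s)}\le\sigma^{-1}d_{g_0}$, then propagate to time $t$ with the bi-Lipschitz factor $(t/s)^{(n-1)\a}\le s^{-(n-1)\a}$ to land on $d_{g(t)}\le C\,d_{g_0}^{1-2(n-1)\a}$. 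If instead $d_{g_0}(x,y)\ge\sigma^{-1}\sqrt{t}$, (ii) applies directly at time $t$ and gives a bound that is even stronger. You should rebuild your proof with (ii) and (iii) as the \emph{inputs} coming from \cite[Lemma 2.2]{LeeTam2022} and the rescaling trick, not as outputs of (iv).
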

\begin{proof}
The conclusion (i) follows from \cite[Lemma 8.3]{Perelman2002}, see also \cite[Corollary 3.3]{SimonTopping2016} using only scaling invariant estimate of $|\Rm(g(t))|$.

To prove (ii), we want to apply a distance estimate in \cite{LeeTam2022}, see also \cite{He2016}. Let $R_0$ and $\mu$ be the constants obtained from \cite[Lemma 2.2]{LeeTam2022}. Indeed, (iii) follows immediately without scaling here. For $x,y\in M$ so that $d_{g_0}(x,y)=r_0\leq 1$. Consider $\tilde g(t)=\lambda^2 g(\lambda^{-2}t),t\in [0,\lambda^2T]$ where $\lambda=\mu^{-1}R_0 r_0^{-1}$ so that $d_{\tilde g(0)}(x,y)=\mu^{-1}R_0$.  If we choose $\tilde r$ sufficiently small, we have $\lambda\geq 1$ and note that the scalar curvature of $\tilde g(t)$ becomes better. Hence  applying \cite[Lemma 2.2]{LeeTam2022} to $\tilde g(t)$, we have  for all $t\in [0,\lambda^2 T\wedge 1]$,
$$d_{\tilde g_0}(x,y)=\mu^{-1}R_0\geq 2\mu^{-1} d_{\tilde g(t)}(x,y).$$
Re-scaling it back yields $d_{g_0}(x,y)\geq 2\mu^{-1}d_{g(t)}(x,y)$ for $t\in [0,T\wedge \lambda^{-2}]$ which means for $t\leq \mu^2R_0^{-2}r_0^2$. This proves (ii).

The conclusion (iv) is similar to the argument in \cite[Lemma 2.4]{HuangKongRongXu2020}.  By condition (b), for $0<s\leq t\leq T$, we have  \begin{equation}\label{distance-ts}
    \left(\frac{t}{s}\right)^{-(n-1)\alpha}\leq \frac{d_{g(t)}(x, y)}{d_{g(s)}(x, y)}\leq \left(\frac{t}{s}\right)^{(n-1)\alpha}.
\end{equation}

Combining with (i), we have \begin{equation}\label{distance-right}
    d_{g_0}(x, y)\leq \left(\frac{t}{s}\right)^{(n-1)\alpha} d_{g(t)}(x, y)+C_n\sqrt{s}.
\end{equation}

Now we may assume $d_{g(t)}(x, y)\leq 1$ and choose $s$ such that $$\left(\frac{t}{s}\right)^{(n-1)\alpha} d_{g(t)}(x, y)=\sqrt{\frac{s}{t}}.$$ And hence \eqref{distance-right} implies \begin{equation}
    d_{g_0}(x, y)\leq (1+C_n\sqrt{t})d^{\frac{1}{1+2(n-1)\alpha}}_{g(t)}(x,y)\leq C_n d^{\frac{1}{1+2(n-1)\alpha}}_{g(t)}(x,y).
\end{equation}
provided that $t\leq 1$. This gives right hand side of (iv).

To see left hand side of (iv), fix $x,y\in M$ and $t\in (0,T\wedge 1]$. We may assume $t<\tilde r^2$ thanks to assumption (b). If $\sigma^{-1}\sqrt{t}\leq d_{g_0}(x,y)$, then the conclusion follows from (ii) by adjusting the choice of $\sigma$. Therefore, we might assume $\sqrt{t}>\sigma d_{g_0}(x,y)=\sqrt{s}$. By (b) and (ii), 
\begin{equation}
    \begin{split}
        d_{g(t)}(x,y)&\leq s^{-(n-1)\a} d_{g(s)}(x,y)\\
        &\leq \sigma^{-1}s^{-(n-1)\a} d_{g_0}(x,y)\\
        &=C(\sigma,n,\a) d_{g_0}^{1-2(n-1)\a}(x,y).
    \end{split}
\end{equation}
This completes the proof by further adjusting $\sigma$ if necessary.
\end{proof}

With Lemma~\ref{DistanceDistortion-RF} in mind, we can now prove Theorem~\ref{Euc-gap} using the same argument in \cite[Theorem 1.2]{ChanChenLee2021} by utilizing the new existence result,  Theorem~\ref{main-Existence}.

\begin{proof}[Proof of Theorem~\ref{Euc-gap}]The proof is identical to that of \cite[Corollary 1.1]{ChanChenLee2021}, we sketch it for reader's convenience. Since the assumptions are scaling invariant, by applying Theorem~\ref{main-Existence} on $R^{-2}g_0$ for $R\to +\infty$ and re-scale it back, we obtain a sequence of Ricci flow $g_R(t)$ on $[0,TR^2]$ with $g(0)=g_0$ for any $R>0$. Using the argument in the proof of Theorem~\ref{main-Existence}, we can extract convergent sub-sequence in locally smooth sense to obtain a long-time solution $g(t)$ to the Ricci flow with $g(0)=g_0$ 
and 
\begin{equation}
    \left\{
    \begin{array}{ll}
         |\Rm(g(t))|\leq C_0\sigma t^{-1} ; \\
         \mathrm{inj}(g(t))\geq C_0^{-1}\sqrt{t};\\
          \nu(M,g(t))\geq -A;\\
          \mathcal{R}(g(t))\geq 0.
    \end{array}
    \right.
\end{equation}
Then a contradiction type argument (see \cite[Theorem 4.2]{ChanChenLee2021}) will show that if $\sigma$ is sufficiently small, then the volume ratio and injectivity radius at time $t$ is almost Euclidean up to scale $\sqrt{t}$. Now Lemma~\ref{DistanceDistortion-RF} and scalar curvature lower bound imply that the volume ratio lower bound of $g_0$ is almost Euclidean up to scale $\sqrt{t}$. Since $t$ can be arbitrary large, this proves the almost Euclidean volume ratio. 

On the other hand, the estimates of $\mathrm{inj}(g(t))$ and Lemma~\ref{DistanceDistortion-RF} (applied to $R^{-2}g(R^2t)$) imply that $M$ can be exhausted by a sequence of Euclidean balls. The topological type follows from \cite{Brown1961}. The diffeomorphism when $n>4$ follows from the uniqueness of differentiable structure of $\mathbb{R}^n$, see \cite{Stallings1962}.
\end{proof}

As in \cite[Theorem 1.4]{ChanChenLee2021}, with Ricci flow smoothing, we may study the Gromov-Hausdorff limit of manifolds with $L^{n/2}$ curvature pinching. Thanks to the newly established existence theory, we can generalize it to situation with possibly unbounded curvature and slightly weaker curvature assumptions. We state the non-collapsing assumption in term of $\bar\nu$ in this work as it is more commonly considered outside the Ricci flow community. 

\begin{thm}\label{Theorem:GH} For any positive integer $n\geq 4$ and constant $A,L,\tau,\lambda, v_0>0$, there exists constant $\e_0$ such that the following holds. Suppose $( M^n_i,g_i, p_i)$ is a pointed sequence of complete non-compact Riemannian manifolds with the following properties: for all $x\in M$,
\begin{enumerate}
\item[(a)] $\mathcal{R}(g_i)\geq -\lambda;$
\item[(b)] $\mathrm{Vol}_{g_i}\left( B_{g_i}(x,2r)\right)\leq L \cdot \mathrm{Vol}_{g_i}\left( B_{g_i}(x,r)\right)\leq Lv_0 r^n$ for all $r\leq 1$;
\item[(c)] $\bar\nu(B_{g_i}(x,1),g_i,\tau)\geq -A$;
\item[(d)] $\displaystyle\left(\int_{B_{g_i}(x, 1)}|\Rm(g_i)|^{n/2}d\mathrm{vol}_{g_0}\right)^{2/n}\leq \e$;
\item[(e)]$\inf\Ric(g_i)>-\infty$.
\end{enumerate}
Then there exists a smooth manifold $M_\infty$ and a complete distance metric $d_\infty$ on $M_\infty$ generating the same topology as $M_\infty$ such that after passing to sub-sequence, $(M_i, d_{g_i}, p_i)$ converges in the pointed Gromov-Hausdorff sense to $(M_{\infty}, d_{g_{\infty}}, p_{\infty})$.
\end{thm}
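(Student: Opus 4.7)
The plan is to reduce the problem to Cheeger--Gromov compactness at a positive time slice of an approximating Ricci flow, and then transfer this convergence back to the initial data using the distance distortion estimate of Lemma~\ref{DistanceDistortion-RF}. First, apply Theorem~\ref{main-Existence} to each $(M_i^n,g_i)$, which produces a Ricci flow $g_i(t)$ on $M_i\times [0,T]$ with $g_i(0)=g_i$ satisfying, for constants $C_0,T$ independent of $i$, the uniform scaling invariant estimates $|\Rm(g_i(t))|\leq C_0\varepsilon/t$, $\mathrm{inj}_{g_i(t)}\geq C_0^{-1}\sqrt{t}$, $\mathcal{R}(g_i(t))\geq -\lambda$, and a uniform local $L^{n/2}$ curvature bound. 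Choose and fix a small $t_0\in(0,T]$. Then $(M_i,g_i(t_0),p_i)$ has bounded curvature and bounded below injectivity radius, so by the classical Cheeger--Gromov--Hamilton compactness theorem we may pass to a subsequence so that $(M_i,g_i(t_0),p_i)\to(M_\infty,g_\infty(t_0),p_\infty)$ in the pointed smooth Cheeger--Gromov topology. This produces the candidate smooth manifold $M_\infty$.

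Next, convert this into Gromov--Hausdorff convergence of the original metrics. The flows $g_i(t)$ satisfy hypotheses (a)--(d) of Lemma~\ref{DistanceDistortion-RF}: indeed, after possibly shrinking $t_0$ so that $C_0\varepsilon<\frac{1}{4(n-1)}$, assumption~(b) of the lemma holds with $\alpha=C_0\varepsilon$, while (a), (c), (d) follow from the conclusion of Theorem~\ref{main-Existence} together with hypothesis~(b) of the present theorem. Applying Lemma~\ref{DistanceDistortion-RF}(i) and~(iv), together with the corresponding large-scale statements (ii)--(iii), to the fixed time $t_0$ yields a uniform two-sided comparison between $d_{g_i}$ and $d_{g_i(t_0)}$ on bounded sets: on any ball $B_{g_i}(p_i,R)$ the identity map from $(M_i,d_{g_i(t_0)})$ to $(M_i,d_{g_i})$ is a bi-H\"older homeomorphism with exponents and constants depending only on $R$ and the uniform data, not on $i$. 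Combining with the smooth Cheeger--Gromov convergence of $(M_i,g_i(t_0),p_i)$, a standard diagonal/Arzel\`a--Ascoli argument then gives pointed Gromov--Hausdorff convergence $(M_i,d_{g_i},p_i)\to (M_\infty,d_\infty,p_\infty)$ after a further subsequence, where $d_\infty$ is the locally H\"older limit of $d_{g_i}$ pulled back to $M_\infty$ through the Cheeger--Gromov diffeomorphisms.

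Finally, we need that $d_\infty$ is a complete distance function generating the smooth manifold topology on $M_\infty$. The two-sided H\"older comparison of distances, which passes to the limit, immediately gives $C^{-1}d_{g_\infty(t_0)}(x,y)^{1+2(n-1)\alpha}\leq d_\infty(x,y)\leq C\, d_{g_\infty(t_0)}(x,y)^{1-2(n-1)\alpha}$ on bounded sets of $M_\infty$ (with analogous large-scale bounds). Since $d_{g_\infty(t_0)}$ is the smooth Riemannian distance, this shows $d_\infty$ is a genuine metric, that it induces the manifold topology, and that Cauchy sequences for $d_\infty$ are Cauchy for $d_{g_\infty(t_0)}$, yielding completeness.

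The main obstacle is precisely this last step: since Lemma~\ref{DistanceDistortion-RF} only gives a H\"older comparison (with exponent depending on $\alpha=C_0\varepsilon$), one has to be careful that the limiting $d_\infty$ separates points and does not degenerate under the H\"older estimate, and that the Gromov--Hausdorff limit is really the same underlying smooth manifold $M_\infty$ produced by Cheeger--Gromov compactness, rather than a possibly different metric completion. Choosing $\varepsilon_0$ small enough so that $2(n-1)C_0\varepsilon_0<1$ makes the H\"older exponents strictly positive in both directions, which is exactly what is needed to identify the two limits and to ensure $d_\infty$ is nondegenerate.
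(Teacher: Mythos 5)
Your proposal follows essentially the same route as the paper: smooth each $g_i$ by the Ricci flow of Theorem~\ref{main-Existence}, extract a Cheeger--Gromov limit of the smoothed metrics, and then use the distance distortion estimates of Lemma~\ref{DistanceDistortion-RF} to transfer the convergence back to the initial distance metrics $d_{g_i}$ as a bi-H\"older pointed Gromov--Hausdorff convergence. The only cosmetic difference is that you apply Cheeger--Gromov compactness at a single fixed time slice $t_0$, whereas the paper invokes Hamilton's compactness to obtain convergence of the whole family $g_i(t)$, $t\in(0,T]$; for the statement in question these are interchangeable.

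One small slip worth flagging: you write that you ``possibly shrink $t_0$ so that $C_0\varepsilon<\tfrac{1}{4(n-1)}$''. Shrinking $t_0$ does not affect $\alpha=C_0\varepsilon$; the condition $C_0\varepsilon_0<\tfrac{1}{4(n-1)}$ is a smallness requirement on $\varepsilon_0$ itself, independent of the time slice. You do in fact state the correct requirement $2(n-1)C_0\varepsilon_0<1$ in your final paragraph, so this is only a wording issue, and your observation there --- that the two-sided H\"older bounds with strictly positive exponents are what guarantee that $d_\infty$ is nondegenerate, induces the manifold topology, and that the GH limit is realized on the Cheeger--Gromov manifold $M_\infty$ --- is precisely the point the paper is implicitly relying on.
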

\begin{proof}[Sketch of Proof]
The proof is identical to that of \cite[Theorem 1.4]{ChanChenLee2021}. We only give a sketch. By Theorem~\ref{main-Existence}, each $g_i$ can be deformed to $g_i(t)$ for $t\in [0,T]$ with uniform scaling invariant estimate. By Hamilton's compactness, we may assume that $(M_i,g_i(t),p_i),t\in (0,T]$ converges to $(M_\infty,g_\infty(t),p_\infty),t\in (0,T]$ in smooth Cheeger-Gromov sense. By Lemma~\ref{DistanceDistortion-RF} and a countable argument, $d_{g_i}$ converges to $d_\infty$ locally modulus diffeomorphism. Moreover, Lemma~\ref{DistanceDistortion-RF} infers that $d_\infty$ is bi-H\"older to $d_{g_\infty(t)}$. This completes the proof.
\end{proof}

\end{document}